\renewcommand*{\baselinestretch}{1.25}
\newtheorem{theorem}{Theorem}[section]
\newtheorem{lem}{Lemma}[section]
\newtheorem{prop}{Proposition}[section]
\newtheorem{cor}{Corollary}[section]
\theoremstyle{definition}
\newtheorem*{rmk*}{Remark}
\newtheorem{rmk}{Remark}[section]
\DeclareMathOperator{\trace}{tr}
\DeclareMathOperator{\diag}{diag}
\DeclareMathOperator{\spr}{spr}
\DeclareMathOperator{\variance}{Var}
\DeclareMathOperator{\argmax}{argmax}
\numberwithin{equation}{section}
    \renewcommand*{\section}{\@startsection{section}{1}{\z@}%
    {10pt}{5pt}{\reset@font\normalsize\bfseries}}
    \renewcommand*{\subsection}{\@startsection{subsection}{2}{\z@}%
    {5pt}{5pt}{\reset@font\normalsize\mdseries\itshape}}
    \renewcommand*{\subsubsection}{\@startsection{subsubsection}{3}{\z@}%
    {5pt}{5pt}{\reset@font\normalsize\mdseries\itshape}}
\def\@seccntformat#1{\csname the#1\endcsname.\quad}
\def\@listi{\leftmargin\leftmargini
  \topsep=.5\baselineskip %0pt
  \partopsep=0pt \parsep=0pt \itemsep=0pt}
\let\@listI\@listi
\def\@listii{\leftmargin\leftmarginii
  \labelwidth\leftmarginii \advance\labelwidth-\labelsep
  \topsep=0pt \partopsep=0pt \parsep=0pt \itemsep=0pt}
\def\@listiii{\leftmargin\leftmarginiii
  \labelwidth\leftmarginiii \advance\labelwidth-\labelsep
  \topsep=0pt \partopsep=0pt \parsep=0pt \itemsep=0pt}
\def\@listiv{\leftmargin\leftmarginiv
  \labelwidth\leftmarginiv \advance\labelwidth-\labelsep
  \topsep=0pt \partopsep=0pt \parsep=0pt \itemsep=0pt}
\newcommand*\patchAmsMathEnvironmentForLineno[1]{%
  \expandafter\let\csname old#1\expandafter\endcsname\csname #1\endcsname
  \expandafter\let\csname oldend#1\expandafter\endcsname\csname end#1\endcsname
  \renewenvironment{#1}%
     {\linenomath\csname old#1\endcsname}%
     {\csname oldend#1\endcsname\endlinenomath}}% 
\newcommand*\patchBothAmsMathEnvironmentsForLineno[1]{%
  \patchAmsMathEnvironmentForLineno{#1}%
  \patchAmsMathEnvironmentForLineno{#1*}}%
\title{On the asymptotic structure of Brownian motions with\\
a small lead-lag effect}
\author{Yuta Koike\thanks{Department of Business Administration, Graduate School of Social Sciences, Tokyo Metropolitan University, Marunouchi Eiraku Bldg. 18F, 1-4-1 Marunouchi, Chiyoda-ku, Tokyo 100-0005 Japan}
\thanks{The Institute of Statistical Mathematics, 10-3 Midori-cho, Tachikawa, Tokyo 190-8562, Japan}
\thanks{CREST, Japan Science and Technology Agency}
%\date{}
}
\begin{document}

%\linenumbers

\maketitle

\begin{abstract}

This paper considers two Brownian motions in a situation where one is correlated to the other with a slight delay. 
We study the problem of estimating the time lag parameter between these Brownian motions from their high-frequency observations, which are possibly subject to measurement errors. The measurement errors are assumed to be i.i.d., centered Gaussian and independent of the latent processes. 
%We consider high-frequency observations of two Brownian motions in a situation where one is correlated to the other with a slight delay, and investigate the asymptotic structure of the likelihood ratio process for this model when the time lag parameter between these Brownian motions is asymptotically infinitesimal. The observation data is possibly subject to measurement errors which are assumed to be i.i.d., centered Gaussian and independent of the latent processes. 
We investigate the asymptotic structure of the likelihood ratio process for this model when the lag parameter is asymptotically infinitesimal. 
We show that the structure of the limit experiment depends on the level of the measurement errors: If the measurement errors locally dominate the latent Brownian motions, the model enjoys the LAN property. Otherwise, the limit experiment does not result in typical ones appearing in the literature. 
%The proof is based on the fact that the model is asymptotically equivalent to discrete observations of two Brownian motions under endogenous measurement errors. %where the lag parameter corresponds to the degree of the endogeneity.   
We also discuss the efficient estimation of the lag parameter to highlight the statistical implications.
\vspace{3mm}

\noindent \textit{Keywords and phrases}: Asymptotic efficiency; Endogenous noise; Lead-lag effect; Local asymptotic normality; Microstructure noise.

\end{abstract}

\section{Introduction}

% Setup
Let $B_t=(B^1_t,B^2_t)$ $(t\in\mathbb{R})$ be a bivariate two-sided Brownian motion such that $B_0=0$, $E[(B^1_1)^2]=E[(B^2_1)^2]=1$ and $E[B^1_1B^2_1]=\rho$ for some $\rho\in(-1,0)\cup(0,1)$. Also, let $\epsilon_i=(\epsilon^1_i,\epsilon^2_i)$ ($i=1,2,\dots$) be a sequence of i.i.d.~bivariate standard normal variables independent of $B$. For each $n\in\mathbb{N}$, we denote by $\mathbb{P}_{n,\vartheta}$ the law of the vector $\mathbf{Z}_n=(X_1,\dots,X_n,Y_1,\dots,Y_n)^\top$ generated by the following model:
\begin{equation}\label{model}
\left\{\begin{array}{lll}
X_i=B^1_{i/n}+\sqrt{v_n}\epsilon^1_i,&Y_i=B^2_{i/n-\vartheta}+\sqrt{v_n}\epsilon^2_i&\textrm{if }\vartheta\geq0,\\
X_i=B^1_{i/n-\vartheta}+\sqrt{v_n}\epsilon^1_i,&Y_i=B^2_{i/n}+\sqrt{v_n}\epsilon^2_i&\textrm{if }\vartheta<0
\end{array}\right.
\qquad\text{for }i=1,\dots,n,
\end{equation}
where $v_n$ is a non-negative number. $\vartheta\in\mathbb{R}$ denotes the unknown time-lag parameter which we are interested in. Especially, the sign of $\vartheta$ is unknown. The aim of this paper is to study the asymptotic structure of the sequence of experiments $(\mathbb{R}^{2n},\mathcal{B}^{2n},(\mathbb{P}_{n,\vartheta})_{\vartheta\in\mathbb{R}})$, $n=1,2,\dots$, as $n\to\infty$ when the time lag parameter $\vartheta$ is asymptotically infinitesimal, i.e.~$\vartheta$ tends to 0 as $n\to\infty$ (here and below $\mathcal{B}^m$ denotes the Borel $\sigma$-field of $\mathbb{R}^m$ for $m\in\mathbb{N}$). More precisely, we study the limit experiment of $(\mathbb{P}_{n,r_nu})_{u\in\mathbb{R}}$ as $n\to\infty$ for the proper convergence rate $r_n$.

% HRY model
%If $v_n\equiv0$, model \eqref{model} is a special case of the lead-lag model proposed by \citet{HRY2013}. In the following we will refer to their model as the HRY model. The HRY model was also studied in \citet{RR2010} with an asymptotic regime different from the current setting.
%The lead-lag effect, which refers to a situation where one time series is correlated to another time series at a later time, is a common concept in time series analysis. It has especially drawn attention in analyses of economic time series data for a long time, and associated econometric methods have been developed by many authors.
If $v_n\equiv0$, model \eqref{model} is a special case of the Hoffmann-Rosenbaum-Yoshida (HRY) model introduced in \citet{HRY2013} to describe lead-lag effects in high-frequency financial data. A similar model has also been studied in \citet{RR2010} with an asymptotic regime different from the current setting. Here, the lead-lag effect refers to a situation where one time series is correlated to another time series at a later time, which has especially drawn attention in analysis of economic time series data for a long time, and associated econometric methods have been developed by many authors; see Section 1 of \cite{HRY2013}, Section 3 of \cite{RR2010} and references therein.   
%Recently, some authors such as \citet{AM2014}, \citet{HA2014} and \citet{IPSS2015} have examined the HRY model in empirical applications and showed the usefulness of their approach. However, there is few theoretical study for the HRY model. The purpose of this paper is trying to fill in this gap. In particular, we aim at developing an asymptotic decision theory for the HRY model. 
The practicality of the HRY model in empirical work has recently been established by several authors such as \citet{AM2014}, \citet{HA2014} and \citet{BOW2017} for financial data and \citet{IPSS2015} for social media data. 
%On the other hand, there is few theoretical study for the HRY model. In particular, to the author's knowledge, nothing has been known about the optimality of statistical inferences for the HRY model. The purpose of this paper is trying to fill in this gap. %In particular, we aim at developing an asymptotic decision theory for the HRY model. 
These empirical studies show that time lag parameters are typically comparable to the observation frequencies in their scales. This motivates us to study the HRY model when $\vartheta$ is small. In such a situation, one would especially be interested in how small lag parameters can be identified in principle. To the author's knowledge, however, there is few theoretical study for the HRY model and, in particular, nothing has been known about the optimality of statistical inferences for the HRY model. The purpose of this paper is trying to fill in this gap.   

In this paper, as well as (a special case of) the HRY model, we also consider a situation where the model contains measurement errors. This is motivated by the recent studies for the volatility estimation from ultra high frequency financial data, which is typically modeled as a discretely observed semimartingale with market microstructure noise. We refer to Chapter 7 of \citet{AJ2014} for a brief description of this subject. In particular, the asymptotic structure and the asymptotic efficiency bound have been established in the work of \citet{GJ2001a,GJ2001b} (see also \citet{CMSH2010}) for a statistical model of estimating the scale parameter $\sigma>0$ from the discrete observations
\begin{equation}\label{GJmodel}
\sigma W_{i/n}+\sqrt{v_n}\delta_i,\qquad i=1,\dots,n,
\end{equation}
where $W=(W_t)_{t\in[0,1]}$ is a one-dimensional standard Wiener process and $(\delta_i)_{i=1}^n$ is a sequence of centered i.i.d.~standard normal variables independent of $W$. They proved the LAN property for the above model and constructed asymptotically efficient estimators for $\sigma$ (they indeed considered a more general setting). Extensions of their LAN result to a multivariate setting have also been studied by several authors. The correlation estimation in a bivariate setting is studied in \cite{B2011a}, while a more general setting containing non-synchronous sampling case is studied in \cite{Ogihara2014noise}. On the other hand, \citet{Reiss2011} has studied the asymptotic structure of model \eqref{GJmodel} when $\sigma$ is a function of time rather than a constant and established the asymptotic equivalence between such a model and a Gaussian white noise model. The result has been extended to the bivariate case by \cite{BR2014} and a multivariate setting containing non-synchronous case by \cite{BHMR2014}. Another type of extension, replacing the Wiener process $W$ by a different continuous-time process, has also been studied. For example, \citet{SSH2014} consider the efficient estimation of $\sigma$ in a situation where $W$ is a more general Gaussian process, especially a fractional Brownian motion.  

% main result
%The main contribution of this paper is to determine (i) the proper convergence rate $r_n$, and (ii) the limit experiment of $(\mathbb{P}_{n,r_nh})_{h\in\mathbb{R}}$. 
The main contribution of this paper is (i) to determine the proper convergence rate $r_n$, and (ii) to derive a stochastic expansion for the likelihood ratio process for $(\mathbb{P}_{n,r_nu})_{u\in\mathbb{R}}$. 
Analogously to \cite{GJ2001a}, the proper convergence rate $r_n$ depends on the behavior of the sequence $nv_n$. This is intuitively natural because $\variance[B_{i/n}-B_{(i-1)/n}]=n^{-1}$ and thus the behavior of $nv_n$ determines how strongly the measurement errors (locally) dominate the nature of the observed returns. In particular, we find that $r_n=n^{-\frac{3}{2}}$ if $v_n\equiv0$, or more generally if $nv_n$ is bounded.\footnote{Indeed, an intuition for this fact has already been appeared in \cite{HRY2013} (see Remark \ref{intuition}).} The rate $n^{-\frac{3}{2}}$ is much faster than the usual parametric rate $n^{-\frac{1}{2}}$ and even faster than the rate $n^{-1}$. Since the time resolution of our model is $n^{-1}$, our result suggests that we could estimate lag parameters smaller than the time resolution of observation data. This implication is at least true for our restrictive situation, as shown in Section \ref{section:application}. 
Since the convergence rate of the estimator for the lag parameter $\vartheta$ proposed in \cite{HRY2013} cannot be faster than $n^{-1}$ (see Proposition 2 of \cite{HRY2013} and the discussion after this proposition), our result shows that their estimator is suboptimal in the setting considered in this paper (although their estimator works in a more general setting).

\if0
Given the proper convergence rate, we show that the likelihood ratio process $(\mathrm{d}\mathbb{P}_{n,r_nh}/\mathrm{d}\mathbb{P}_{n,0})_{h\in\mathbb{R}}$ converges finite dimensionally in law to the process
\begin{equation}\label{limit}
\left(\exp\left(h\sqrt{I_\gamma} \zeta_1+|h|\sqrt{J_\gamma} \zeta_2-\frac{h^2}{2}(I_\gamma+J_\gamma)\right)\right)_{h\in\mathbb{R}}
\end{equation}
under $\mathbb{P}_{n,0}$ as $n\to\infty$, where $ \zeta_1$ and $ \zeta_2$ are two mutually independent standard normal variables and $I_\gamma$ and $J_\gamma$ are non-negative numbers determined by the asymptotic behavior of $nv_n$, which will precisely be defined by \eqref{def:I} and \eqref{def:J}. In particular, $I_\gamma$ is always positive, while $J_\gamma$ is positive if $nv_n$ is bounded and $J_\gamma=0$ otherwise. 
\if0
Setting 
\[
\mathbb{Q}_h=  \mathcal{N}(I_\gamma h,I_\gamma)\otimes   \mathcal{N}(J_\gamma |h|,J_\gamma),
\]
we can easily check that the process \eqref{limit} is equal to $(\mathrm{d}\mathbb{Q}_{h}/\mathrm{d}\mathbb{Q}_{0})_{h\in\mathbb{R}}$. Therefore, by Le Cam's first lemma and Theorem 61.6 of \cite{Strasser1985} the localized experiment $(\mathbb{P}_{n,r_nh})_{h\in\mathbb{R}}$ converges to the experiment $(\mathbb{R}^2,\mathcal{B}(\mathbb{R}^2),(\mathbb{Q}_h)_{h\in\mathbb{R}})$. That is, for any $h_0\in\mathbb{R}$ and any finite subset $F$ of $\mathbb{R}$, it holds that
\[
\mathcal{L}\left(\left(\frac{\mathrm{d}\mathbb{P}_{n,h}}{\mathrm{d}\mathbb{P}_{n,h_0}}\right)_{h\in F}|\mathbb{P}_{n,h_0}\right)\xrightarrow{d}\mathcal{L}\left(\left(\frac{\mathrm{d}\mathbb{Q}_{h}}{\mathrm{d}\mathbb{Q}_{h_0}}\right)_{h\in F}|\mathbb{Q}_{h_0}\right)
\]
as $n\to\infty$. The experiment $(\mathbb{Q}_h)_{h\in\mathbb{R}}$ is non-standard unless $J_\gamma=0$.
\fi
From this result we can deduce the limit experiment of the localized experiment $(\mathbb{P}_{n,r_nh})_{h\in\mathbb{R}}$ by a contiguity argument (see Corollary \ref{limit}). 
The case that $J_\gamma=0$ corresponds to the situation where the measurement errors locally dominate the signal and in this case the limit experiment is a  Gaussian shift experiment. This result is of interest because model \eqref{model} evidently exhibits irregularity about the parameter $\vartheta$ and the limit experiment of such a model is typically different from the Gaussian shift experiment. Our result means that the measurement errors have a kind of regularizing effect on the asymptotic structure of model \eqref{model}. On the other hand, if $J_\gamma>0$, which corresponds to the cases where the signal dominates or is balanced with the measurement errors, the limit experiment does not result in well-studied cases.
\fi

%\if0
Given the proper convergence rate, we have the following stochastic expansion for the likelihood ratio process: There are random variables $\mathsf{T}_n$ and $\mathsf{S}_n$ defined on $(\mathbb{R}^{2n},\mathcal{B}^{2n})$ and non-negative numbers $I_\gamma$ and $J_\gamma$ such that
\begin{equation}\label{expansion}
\log\frac{\mathrm{d}\mathbb{P}_{n,r_nu_n}}{\mathrm{d}\mathbb{P}_{n,0}}-\left\{
u_n\mathsf{T}_n+|u_n|\mathsf{S}_n-\frac{u_n^2}{2}(I_\gamma+J_\gamma)\right\}\xrightarrow{p}0\qquad
\text{under }~\mathbb{P}_{n,0}\quad\text{as $n\to\infty$}
\end{equation}
for any bounded sequence $u_n$ of real numbers and
\begin{equation}\label{CLT}
(\mathsf{T}_n,\mathsf{S}_n)\xrightarrow{d} \mathcal{N}(0,I_\gamma)\otimes \mathcal{N}(0,J_\gamma)\qquad
\text{under }~\mathbb{P}_{n,0}\quad\text{as $n\to\infty$}.
\end{equation}    
%From this result we can deduce the limit experiment of the localized experiment $(\mathbb{P}_{n,r_nu})_{u\in\mathbb{R}}$ by a contiguity argument (see Corollary \ref{limit}). 
Therefore, by a contiguity argument we deduce that the experiments $(\mathbb{R}^{2n},\mathcal{B}^{2n},(\mathbb{P}_{n,r_nu})_{u\in\mathbb{R}})$ converge weakly to the experiment $(\mathbb{R}^2,\mathcal{B}^2,(\mathbb{Q}_u)_{u\in\mathbb{R}})$ in the Le Cam sense, where $\mathbb{Q}_u=\mathcal{N}(uI_\gamma,I_\gamma)\otimes\mathcal{N}(|u|J_\gamma,J_\gamma)$ (see Corollary \ref{limit}). 
The numbers $I_\gamma$ and $J_\gamma$ are determined by the asymptotic behavior of $nv_n$ and precisely defined by \eqref{def:I}--\eqref{def:J}. In particular, $I_\gamma$ is always positive, while $J_\gamma$ is positive if $nv_n$ is bounded and $J_\gamma=0$ otherwise. The case $J_\gamma=0$ corresponds to the situation where the measurement errors locally dominate the signal, and in this case our model enjoys the LAN property which commonly appears in regular experiments. This result is of interest because model \eqref{model} exhibits irregularity in the sense that its likelihood function is not smooth in $\vartheta$, and the limit experiment of such a model typically deviates from the LAN structure as illustrated in Chapters V--VII of \citet{IH1981}. Our result means that the measurement errors have a kind of regularizing effect on the asymptotic structure of model \eqref{model}. 
On the other hand, if $J_\gamma>0$, which corresponds to the cases where the signal dominates or is balanced with the measurement errors, in addition to an observation from a usual Gaussian shift experiment $ \mathcal{N}(u,I_\gamma^{-1})$, the limit experiment contains an extra observation from the experiment $ \mathcal{N}(|u|,J_\gamma^{-1})$. %which breaks the shift structure of the limit experiment. Moreover, 
Although this experiment looks simple, to the author's knowledge it does not result in well-studied cases (such as in \cite{IH1981}), so the definition of asymptotically efficient estimators in this case is not obvious. To obtain the asymptotic efficiency bound for estimating the lag parameter in this case, in Section \ref{section:application} we apply \citet{IH1981}'s theory to our problem, which is a common approach to establish asymptotic efficiency bounds for experiments generated by diffusion type processes (see \citet{Kutoyants2004} for details). 
%One typical approach to obtain an asymptotic lower bound in such a situation is to use the Le Cam-van der Vaart asymptotic representation theorem such as in \citet{DVW2009} and \citet{HP2003}, but this approach is not suitable to our situation because the limit experiment lacks some desirable properties to be analyzed. For example, it is neither complete nor shift-invariant. Therefore, we take another approach   
As a result, we find that Bayesian estimators are asymptotically efficient, while the maximum likelihood estimator is not always asymptotically efficient. This is a common phenomenon in irregular models; see Chapters V--VII of \cite{IH1981}, \cite{KK2000}, Chapter 3 of \cite{Kutoyants2004}, \cite{RS1995} and Chapter 9 of \cite{Vaart1998} for example. 
%\fi

% application
%As an application of our result, we construct some efficient estimators for the lag parameter $\vartheta$ in the model $(\mathbb{P}_{n,\vartheta})_{\vartheta\in\mathbb{R}}$ at $\vartheta=0$.  

% organization
This paper is organized as follows. Section \ref{section:main} presents the main result of the paper. In Section \ref{section:application} we discuss the efficient estimation of the lag parameter in our setting. Section \ref{section:technical} is devoted to the proof of an auxiliary technical result.

\subsection*{General notation}

$E_m$ denotes the $m\times m$-identity matrix. For a  matrix $A$, we denote by $\|A\|_\mathrm{sp}$ and $\|A\|_F$ its spectral norm and the Frobenius norm, respectively. That is, $\|A\|_\mathrm{sp}=\sup\{\|Ax\|:\|x\|\leq 1\}$ and $\|A\|_F^2=\trace(A^\top A)$. Also, we denote by $A^{ij}$ the $(i,j)$-th entry of $A$.

\section{Main result}\label{section:main}

We start with completing the definitions of the quantities $r_n$, $I_\gamma$ and $J_\gamma$ appearing in the Introduction. First, following \cite{GJ2001a} we assume that the sequence $nv_n$ converges in $[0,\infty]$ and set
\[
\gamma:=\lim_{n\to\infty}nv_n.
\]
We also assume $\limsup_nv_n<\infty$ as in \cite{GJ2001a}. Then we set
\[
N_n=
\left\{
\begin{array}{cl}
\sqrt{n/v_n}&\textrm{if }\gamma=\infty,\\
n  & \textrm{otherwise}.      
\end{array}
\right.
\]
$N_n$ can be considered as an ``effective'' sample size in the sense that the proper convergence rate for estimating $\sigma$ from model \eqref{model} is given by $N_n^{-\frac{1}{2}}$, which is seen as the regular parametric rate if we regard $N_n$ as the sample size. Using this effective sample size $N_n$, we define our proper convergence rate as $r_n=N_n^{-\frac{3}{2}}$. The constants $I_\gamma$ and $J_\gamma$ appearing in \eqref{expansion}--\eqref{CLT} are defined by
\begin{equation}\label{def:I}
I_\gamma
=\left\{
\begin{array}{ll}
\frac{\rho^2}{2(1-\rho^2)}  & \mathrm{if}~\gamma=0,  \\
\frac{\rho\left(\sqrt{(1+\rho)(1+\rho+4\gamma)}-\sqrt{(1-\rho)(1-\rho+4\gamma)}-2\rho\right)}{8\gamma^2}&\mathrm{if}~0<\gamma<\infty,\\
\frac{\rho^2}{2(\sqrt{1+\rho}+\sqrt{1-\rho})}  & \mathrm{if}~\gamma=\infty   
\end{array}
\right.
\end{equation}
and
\if0
\begin{equation}\label{def:J}
J_\gamma=\frac{\rho^2\{J^0_\gamma(1+\rho)+J^0_\gamma(1-\rho)\}}{8},
\end{equation}
where for any $a>0$ we set
\begin{align*}
J^0_\gamma(a)
=\left\{\begin{array}{ll}
\frac{6}{a^2}&\mathrm{if}~\gamma=0,\\
\frac{1}{2\gamma^2}\left(2-3\left(\frac{a}{a+4\gamma}\right)^{1/2}+\left(\frac{a}{a+4\gamma}\right)^{3/2}\right)&\mathrm{if}~0<\gamma<\infty,\\
0 &\mathrm{if}~\gamma=\infty.
\end{array}\right.
\end{align*}
\fi
\begin{equation}\label{def:J}
J_\gamma=\left\{\begin{array}{ll}
\frac{3}{4}\left\{\frac{1}{(1+\rho)^2}+\frac{1}{(1-\rho)^2}\right\}&\mathrm{if}~\gamma=0,\\
\frac{1}{16\gamma^2}\left[4-3\left\{\left(\frac{1+\rho}{1+\rho+4\gamma}\right)^{\frac{1}{2}}+\left(\frac{1-\rho}{1-\rho+4\gamma}\right)^{\frac{1}{2}}\right\}+\left\{\left(\frac{1+\rho}{1+\rho+4\gamma}\right)^{\frac{3}{2}}+\left(\frac{1-\rho}{1-\rho+4\gamma}\right)^{\frac{3}{2}}\right\}\right]&\mathrm{if}~0<\gamma<\infty,\\
0 &\mathrm{if}~\gamma=\infty.
\end{array}\right.
\end{equation}

\begin{rmk}
$I_\gamma$ is always positive for any $\gamma\in[0,\infty]$. This is evident when $\gamma=0$ or $\gamma=\infty$. When $0<\gamma<\infty$, this is proven as follows. First suppose that $0<\rho<1$. Then we have
\begin{align*}
&(1+\rho)(1+\rho+4\gamma)-\{\sqrt{(1-\rho)(1-\rho+4\gamma)}+2\rho\}^2\\
&=4\rho+8\gamma\rho-4\rho\sqrt{(1-\rho)^2+4\gamma(1-\rho)}-4\rho^2\\
&=4\rho\left((1-\rho)+2\gamma-\sqrt{(1-\rho)^2+4\gamma(1-\rho)}\right)\\
&=4\rho\left(\sqrt{(1-\rho)^2+4\gamma(1-\rho)+4\gamma^2}-\sqrt{(1-\rho)^2+4\gamma(1-\rho)}\right)
>0.
\end{align*}
Hence we have $I_\gamma>0$. On the other hand, if $-1<\rho<0$, applying the above inequality with replacing $\rho$ by $-\rho$, we obtain $\sqrt{(1-\rho)(1-\rho+4\gamma)}>\sqrt{(1+\rho)(1+\rho+4\gamma)}-2\rho$. Hence we have $I_\gamma>0$.
\end{rmk}

The following statement is our main result.
\begin{theorem}\label{thm:main}
There are two sequences $\mathsf{T}_n$ and $\mathsf{S}_n$ of random variables satisfying \eqref{expansion}--\eqref{CLT} for any bounded sequence $u_n$ of real numbers. 
\end{theorem}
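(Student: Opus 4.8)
The plan is to use the Gaussianity of $\mathbf{Z}_n$ to obtain a closed form for the likelihood ratio and then expand it to second order in the infinitesimal lag $\vartheta=r_nu_n$. Under $\mathbb{P}_{n,\vartheta}$ the vector $\mathbf{Z}_n$ is centered Gaussian with some covariance matrix $\Sigma_n(\vartheta)$, so, writing $\Sigma_0:=\Sigma_n(0)$,
\[
\log\frac{\mathrm{d}\mathbb{P}_{n,\vartheta}}{\mathrm{d}\mathbb{P}_{n,0}}(\mathbf{Z}_n)=-\tfrac12\log\frac{\det\Sigma_n(\vartheta)}{\det\Sigma_0}-\tfrac12\mathbf{Z}_n^\top\big(\Sigma_n(\vartheta)^{-1}-\Sigma_0^{-1}\big)\mathbf{Z}_n .
\]
Since the likelihood ratio is invariant under any fixed invertible linear transformation of the data, I would first pass from the levels to the increments $X_i-X_{i-1}$ and $Y_i-Y_{i-1}$; the Brownian increments are independent and the noise increments form an MA(1) sequence, so each diagonal block of $\Sigma_0$ becomes a Toeplitz matrix, which is what makes the limiting traces computable by spectral (Szeg\H{o}-type) methods.

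The core of the argument is a first-order expansion of $\Delta_n(\vartheta):=\Sigma_n(\vartheta)-\Sigma_0$. Writing $B^2_{i/n-\vartheta}=B^2_{i/n}-\int_{i/n-\vartheta}^{i/n}\mathrm{d}B^2$ for $\vartheta\ge0$ (and symmetrically with $B^1$ for $\vartheta<0$) exhibits the lag as a small \emph{endogenous} increment injected into one channel, correlated with both latent processes through $\rho$; crucially, the channel that receives it switches as $\vartheta$ crosses $0$. Consequently the map $\vartheta\mapsto\Delta_n(\vartheta)$ has distinct one-sided derivatives $\dot\Delta_n^{\pm}$ at $0$, and I would decompose them into even and odd parts $\bar\Delta_n=\tfrac12(\dot\Delta_n^{+}+\dot\Delta_n^{-})$ and $\tilde\Delta_n=\tfrac12(\dot\Delta_n^{+}-\dot\Delta_n^{-})$, so that $\Delta_n(r_nu)=r_n\big(u\,\bar\Delta_n+|u|\,\tilde\Delta_n\big)+O(\vartheta^2)$ entrywise. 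Inserting this into the Neumann expansion of $\Sigma_n(\vartheta)^{-1}$ and the matching expansion of $\log\det$, the term linear in $\Delta_n$ produces exactly the two centered quadratic forms
\[
\mathsf{T}_n=\tfrac{r_n}{2}\big(\mathbf{Z}_n^\top\Sigma_0^{-1}\bar\Delta_n\Sigma_0^{-1}\mathbf{Z}_n-\trace(\Sigma_0^{-1}\bar\Delta_n)\big),\qquad \mathsf{S}_n=\tfrac{r_n}{2}\big(\mathbf{Z}_n^\top\Sigma_0^{-1}\tilde\Delta_n\Sigma_0^{-1}\mathbf{Z}_n-\trace(\Sigma_0^{-1}\tilde\Delta_n)\big)
\]
as the coefficients of $u$ and $|u|$, while the term quadratic in $\Delta_n$ contributes the deterministic normalization $-\tfrac14\trace((\Sigma_0^{-1}\Delta_n)^2)$.

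Two blocks of estimates then remain. First, I would control the remainder: the third- and higher-order terms of both expansions, together with the error from linearizing $\Delta_n(r_nu)$, must be shown to be $o_p(1)$ uniformly for $|u_n|\le K$. This comes down to spectral-norm and trace bounds for products of $\Sigma_0^{-1}$ with the perturbation matrices carrying the correct powers of $r_n=N_n^{-3/2}$; the rate is pinned precisely by the requirement $r_n^2\,\trace((\Sigma_0^{-1}\dot\Delta_n)^2)=O(1)$, equivalently $\trace((\Sigma_0^{-1}\dot\Delta_n)^2)\asymp N_n^{3}$, which keeps $\mathsf{T}_n,\mathsf{S}_n=O_p(1)$. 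Second, for \eqref{CLT} I would apply a central limit theorem for quadratic forms in a Gaussian vector (checking that normalized fourth cumulants, i.e.\ ratios such as $\trace((\Sigma_0^{-1}\bar\Delta_n)^4)/\trace((\Sigma_0^{-1}\bar\Delta_n)^2)^2$, vanish), together with the variance and covariance computations
\[
\variance(\mathsf{T}_n)=\tfrac{r_n^2}{2}\trace((\Sigma_0^{-1}\bar\Delta_n)^2),\quad \variance(\mathsf{S}_n)=\tfrac{r_n^2}{2}\trace((\Sigma_0^{-1}\tilde\Delta_n)^2),\quad \operatorname{Cov}(\mathsf{T}_n,\mathsf{S}_n)=\tfrac{r_n^2}{2}\trace(\Sigma_0^{-1}\bar\Delta_n\Sigma_0^{-1}\tilde\Delta_n).
\]
Showing the first two converge to $I_\gamma$ and $J_\gamma$ and that the covariance vanishes yields both the product structure $\mathcal N(0,I_\gamma)\otimes\mathcal N(0,J_\gamma)$ and, since the deterministic normalization above equals $-\tfrac{u^2}{2}(\variance(\mathsf{T}_n)+\variance(\mathsf{S}_n))$ up to the vanishing cross-trace, the clean coefficient $-\tfrac{u^2}{2}(I_\gamma+J_\gamma)$ in \eqref{expansion}; this self-consistency is the Gaussian-shift identity relating the quadratic normalization to the variance of the linear term.

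I expect the main obstacle to be the simultaneous task of (i) getting a handle on the \emph{non-explicit} inverse $\Sigma_0^{-1}$ of the $2n\times2n$ correlated-Brownian-plus-noise covariance that is sharp enough both to annihilate the remainder and to evaluate the limiting traces, and (ii) carrying out those trace evaluations --- reduced via the Toeplitz structure to integrals of the spectral density of the noisy increment process --- in the three regimes $\gamma=0$, $0<\gamma<\infty$ and $\gamma=\infty$ separately, so as to recover the exact constants \eqref{def:I}--\eqref{def:J}. The delicate feature throughout is the antisymmetric (endogenous) part $\tilde\Delta_n$ of the cross-covariance perturbation: it is what is responsible for the nonstandard term $\mathsf{S}_n$, it is the reason $J_\gamma$ vanishes exactly when the noise dominates ($\gamma=\infty$), and it is the hardest piece to control because it is not a smooth function of $\vartheta$ at the origin.
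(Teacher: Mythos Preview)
Your proposal is correct and follows essentially the same route as the paper: pass to increments, write the covariance perturbation as an exact affine function of $(\vartheta,|\vartheta|)$, define $\mathsf{T}_n,\mathsf{S}_n$ as the resulting centered Gaussian quadratic forms, control the remainder via spectral and Frobenius norm bounds (the paper's Proposition~\ref{prop:main}), and invoke a CLT for Gaussian quadratic forms. The paper's execution differs from your sketch only in two technical devices you should be aware of: (i) rather than carrying an $O(\vartheta^2)$ linearization error, it introduces an auxiliary model $\widetilde{\mathbb{P}}_{n,\vartheta}$ whose covariance is \emph{exactly} $\bar G_n-\rho(\vartheta\bar T_n+|\vartheta|\bar S_n)$ and shows $H(\mathbb{P}_{n,\vartheta},\widetilde{\mathbb{P}}_{n,\vartheta})\to0$ (Proposition~\ref{prop:hellinger}); and (ii) the limiting traces are computed not by Szeg\H{o}-type asymptotics but by an exact DCT diagonalization of $G_n$, after which the cross-block $\bar T_n$ contribution is handled by a direct (and somewhat delicate) Fej\'er-kernel summation argument.
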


We can explicitly give the variables $\mathsf{T}_n$ and $\mathsf{S}_n$ in Theorem \ref{thm:main} by \eqref{ts} below. Theorem \ref{thm:main} has some immediate consequences. The first one is the direct consequence of the definition of the LAN property.
\begin{cor}
If $\gamma=\infty$, $(\mathbb{P}_{n,\vartheta})_{\vartheta\in\mathbb{R}}$ has the LAN property at $\vartheta=0$ with rate $r_n$ and asymptotic Fisher information $I_\gamma$. 
\end{cor}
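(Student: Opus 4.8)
The plan is to verify directly that, under $\gamma=\infty$, the stochastic expansion \eqref{expansion} together with the central limit theorem \eqref{CLT} collapses to the canonical LAN expansion, so that the corollary becomes a matter of reading off the definition. The first step is to observe that $\gamma=\infty$ forces $J_\gamma=0$: by the definition of $J^0_\gamma(a)$ one has $J^0_\gamma(1+\rho)=J^0_\gamma(1-\rho)=0$ in this regime, and hence \eqref{def:J} gives $J_\gamma=0$.

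Next I would exploit the central limit theorem \eqref{CLT}. With $J_\gamma=0$, the limit law $\mathcal{N}(0,J_\gamma)$ is the Dirac mass at $0$, so \eqref{CLT} yields $\mathsf{S}_n\xrightarrow{d}0$ under $\mathbb{P}_{n,0}$; since convergence in distribution to a constant is equivalent to convergence in probability, this upgrades to $\mathsf{S}_n\xrightarrow{p}0$. Consequently, for any bounded sequence $u_n$ the cross term obeys $|u_n|\mathsf{S}_n\xrightarrow{p}0$.

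Substituting this into \eqref{expansion} and using $I_\gamma+J_\gamma=I_\gamma$, the expansion reduces to
\[
\log\frac{\mathrm{d}\mathbb{P}_{n,r_nu_n}}{\mathrm{d}\mathbb{P}_{n,0}}(\mathbf{Z}_n)-\left\{u_n\mathsf{T}_n-\frac{u_n^2}{2}I_\gamma\right\}\xrightarrow{p}0\qquad\text{under }\mathbb{P}_{n,0}.
\]
Specializing to a constant sequence $u_n\equiv u$ (a particular bounded sequence), this is precisely the LAN expansion with central sequence $\mathsf{T}_n$, rate $r_n$, and asymptotic Fisher information $I_\gamma$; the remaining requirement, namely the asymptotic normality $\mathsf{T}_n\xrightarrow{d}\mathcal{N}(0,I_\gamma)$ of the central sequence under $\mathbb{P}_{n,0}$, is again read off directly from \eqref{CLT}. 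Every ingredient of the definition of the LAN property at $\vartheta=0$ is thus in place, and the corollary follows.

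There is no substantial obstacle here: the corollary is essentially a bookkeeping consequence of Theorem \ref{thm:main}, the only genuine input being the vanishing of $J_\gamma$ when $\gamma=\infty$. The one point deserving a word of care is that the convergence in distribution of $\mathsf{S}_n$ to the degenerate law $\mathcal{N}(0,0)$ must be promoted to convergence in probability before the term $|u_n|\mathsf{S}_n$ can be absorbed into the remainder; this is standard, but it is the only step that is not purely algebraic.
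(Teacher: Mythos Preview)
Your argument is correct and is exactly the content of the paper's one-line justification (``direct consequence of the definition of the LAN property''): you have simply spelled out why $\gamma=\infty$ forces $J_\gamma=0$, hence $\mathsf{S}_n\to0$ in probability, so that \eqref{expansion}--\eqref{CLT} reduce to the standard LAN expansion with central sequence $\mathsf{T}_n$ and Fisher information $I_\gamma$.
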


The second one follows from Le Cam's first lemma (see e.g.~Lemma 6.4 of \cite{Vaart1998}).
\begin{cor}\label{contiguity}
$\mathbb{P}_{n,\vartheta_n}$ and $\mathbb{P}_{n,0}$ are mutually contiguous if the sequence $\vartheta_n$ of real numbers satisfies $\vartheta_n=O(r_n)$.
\end{cor}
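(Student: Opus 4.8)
The statement is a routine consequence of the stochastic expansion \eqref{expansion} together with the central limit theorem \eqref{CLT}, once the weak limit of the log-likelihood ratio under $\mathbb{P}_{n,0}$ has been identified. Since $\vartheta_n=O(r_n)$, I can write $\vartheta_n=r_nu_n$ for a bounded real sequence $u_n$. The plan is to show that along any subsequence, after passing to a further subsequence on which $u_n\to u$ for some $u\in\mathbb{R}$, the log-likelihood ratio converges weakly under $\mathbb{P}_{n,0}$ to a Gaussian variable of the canonical form $\mathcal{N}(-\tfrac12\sigma^2,\sigma^2)$, and then to invoke Le Cam's first lemma.

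To carry out the first step, I would apply the expansion \eqref{expansion} with the bounded sequence $u_n$ to obtain
\[
\log\frac{\mathrm{d}\mathbb{P}_{n,\vartheta_n}}{\mathrm{d}\mathbb{P}_{n,0}}(\mathbf{Z}_n)
=u_n\mathsf{T}_n+|u_n|\mathsf{S}_n-\frac{u_n^2}{2}(I_\gamma+J_\gamma)+o_p(1)
\]
under $\mathbb{P}_{n,0}$. Along a subsequence on which $u_n\to u$, the joint convergence $(\mathsf{T}_n,\mathsf{S}_n)\xrightarrow{d}\mathcal{N}(0,I_\gamma)\otimes\mathcal{N}(0,J_\gamma)$ from \eqref{CLT} together with Slutsky's lemma give $u_n\mathsf{T}_n+|u_n|\mathsf{S}_n\xrightarrow{d}uT+|u|S$, where $T\sim\mathcal{N}(0,I_\gamma)$ and $S\sim\mathcal{N}(0,J_\gamma)$ are independent. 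Because $T$ and $S$ are independent, $uT+|u|S\sim\mathcal{N}(0,u^2(I_\gamma+J_\gamma))$, so the log-likelihood ratio converges weakly under $\mathbb{P}_{n,0}$ to a variable $L\sim\mathcal{N}(-\tfrac12\sigma^2,\sigma^2)$ with $\sigma^2=u^2(I_\gamma+J_\gamma)$. The crucial point is that the deterministic centering $-\tfrac{u^2}{2}(I_\gamma+J_\gamma)$ in \eqref{expansion} is exactly minus one half of the limiting variance, so that $E[e^{L}]=\exp(-\tfrac12\sigma^2+\tfrac12\sigma^2)=1$.

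With this in hand, the conclusion follows from Le Cam's first lemma. For $\mathbb{P}_{n,\vartheta_n}\triangleleft\mathbb{P}_{n,0}$ I would verify the criterion that every subsequential weak limit $V$ of $\mathrm{d}\mathbb{P}_{n,\vartheta_n}/\mathrm{d}\mathbb{P}_{n,0}$ under $\mathbb{P}_{n,0}$ satisfies $E[V]=1$: given any subsequence, boundedness of $u_n$ allows extraction of a further subsequence with $u_n\to u$, along which $V\stackrel{d}{=}e^{L}$, and hence $E[V]=1$ by the computation above. The reverse contiguity $\mathbb{P}_{n,0}\triangleleft\mathbb{P}_{n,\vartheta_n}$ then follows from the self-dual structure of the limit: since $L$ has the form $\mathcal{N}(-\tfrac12\sigma^2,\sigma^2)$, Le Cam's third lemma shows that $\log(\mathrm{d}\mathbb{P}_{n,\vartheta_n}/\mathrm{d}\mathbb{P}_{n,0})$ converges under $\mathbb{P}_{n,\vartheta_n}$ to $\mathcal{N}(\tfrac12\sigma^2,\sigma^2)$, whence $\log(\mathrm{d}\mathbb{P}_{n,0}/\mathrm{d}\mathbb{P}_{n,\vartheta_n})$ converges to $\mathcal{N}(-\tfrac12\sigma^2,\sigma^2)$ under $\mathbb{P}_{n,\vartheta_n}$ and the same criterion applies with the roles of the two measures exchanged.

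This argument is essentially bookkeeping once Theorem \ref{thm:main} is available, so I do not anticipate a genuine obstacle. The only points requiring care are the passage to subsequences to handle a possibly non-convergent $u_n$, and the verification that the non-standard term $|u_n|\mathsf{S}_n$ does not disrupt the Gaussian limit; the latter is harmless precisely because $\mathsf{T}_n$ and $\mathsf{S}_n$ are jointly asymptotically Gaussian and independent in the limit, so that the centering in \eqref{expansion} continues to match half the total limiting variance.
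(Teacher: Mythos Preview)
Your proof is correct and follows exactly the route the paper indicates: the paper simply states that the corollary ``follows from Le Cam's first lemma,'' and your argument is a careful unpacking of that claim using the expansion \eqref{expansion}, the CLT \eqref{CLT}, and a subsequence extraction to handle non-convergent $u_n$. The only minor remark is that for the reverse contiguity $\mathbb{P}_{n,0}\triangleleft\mathbb{P}_{n,\vartheta_n}$ you could bypass Le Cam's third lemma entirely by invoking the other criterion in Le Cam's first lemma (positivity of the limit of $\mathrm{d}\mathbb{P}_{n,\vartheta_n}/\mathrm{d}\mathbb{P}_{n,0}$ under $\mathbb{P}_{n,0}$, which is immediate since $e^L>0$ a.s.), but your route is equally valid.
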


The third one is derived from Corollary \ref{contiguity} and Theorem 61.6 of \cite{Strasser1985} (we refer to \cite{DVW2009}, \cite{LeCam1986}, Chapter 10 of \cite{Strasser1985} and Chapters 8--9 of \cite{Vaart1998} for the definition and applications of the weak convergence of experiments).
\begin{cor}\label{limit}
The sequence $(\mathbb{R}^{2n},\mathcal{B}^{2n},(\mathbb{P}_{n,r_nh})_{h\in\mathbb{R}})$ of experiments converges weakly to the experiment $(\mathbb{R}^2,\mathcal{B}^2,$ $(\mathbb{Q}_u)_{u\in\mathbb{R}})$ as $n\to\infty$, where $\mathbb{Q}_u=\mathcal{N}(uI_\gamma,I_\gamma)\otimes\mathcal{N}(|u|J_\gamma,J_\gamma)$.
\end{cor}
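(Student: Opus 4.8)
The plan is to deduce Corollary \ref{limit} from the characterization of weak convergence of experiments through the finite-dimensional convergence of likelihood ratio processes, which is the content of Theorem 61.6 of \cite{Strasser1985}. Concretely, it suffices to show that for every finite subset $F\subset\mathbb{R}$ the law of the likelihood ratio vector $(\mathrm{d}\mathbb{P}_{n,r_nu}/\mathrm{d}\mathbb{P}_{n,0})_{u\in F}$ under $\mathbb{P}_{n,0}$ converges weakly to the law of $(\mathrm{d}\mathbb{Q}_u/\mathrm{d}\mathbb{Q}_0)_{u\in F}$ under $\mathbb{Q}_0$. The point $u=0$ is a legitimate common reference along the whole sequence precisely because Corollary \ref{contiguity} guarantees that the measures $\mathbb{P}_{n,r_nu}$ are mutually contiguous, so that no mass escapes to $0$ or to $\infty$ in the limit.

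First I would compute the target likelihood ratio explicitly. Writing the generic observation of the limit experiment as $(\xi,\eta)$ with $\xi\sim\mathcal{N}(uI_\gamma,I_\gamma)$ and $\eta\sim\mathcal{N}(|u|J_\gamma,J_\gamma)$ independently under $\mathbb{Q}_u$, a direct Gaussian density computation gives
\[
\frac{\mathrm{d}\mathbb{Q}_u}{\mathrm{d}\mathbb{Q}_0}(\xi,\eta)=\exp\left(u\xi+|u|\eta-\frac{u^2}{2}(I_\gamma+J_\gamma)\right),
\]
where under the reference $\mathbb{Q}_0$ one has $(\xi,\eta)\sim\mathcal{N}(0,I_\gamma)\otimes\mathcal{N}(0,J_\gamma)$. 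This is exactly the structure to be matched against the right-hand side of \eqref{expansion}, with the pair $(\mathsf{T}_n,\mathsf{S}_n)$ playing the role of $(\xi,\eta)$.

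The core of the argument is then a continuous-mapping-plus-Slutsky step. Fixing $F=\{u_1,\dots,u_k\}$ and applying the stochastic expansion \eqref{expansion} to each constant sequence $u_n\equiv u_j$, the vector of log-likelihood ratios equals the affine image
\[
\left(u_j\mathsf{T}_n+|u_j|\mathsf{S}_n-\frac{u_j^2}{2}(I_\gamma+J_\gamma)\right)_{j=1}^k
\]
of $(\mathsf{T}_n,\mathsf{S}_n)$ up to a term that is $o_p(1)$ under $\mathbb{P}_{n,0}$. Since \eqref{CLT} yields $(\mathsf{T}_n,\mathsf{S}_n)\xrightarrow{d}\mathcal{N}(0,I_\gamma)\otimes\mathcal{N}(0,J_\gamma)$, the continuous mapping theorem applied to the above affine map followed by coordinatewise exponentiation, together with Slutsky's lemma to absorb the $o_p(1)$, shows that $(\mathrm{d}\mathbb{P}_{n,r_nu_j}/\mathrm{d}\mathbb{P}_{n,0})_{j=1}^k$ converges in law under $\mathbb{P}_{n,0}$ to $\left(\exp(u_j\xi+|u_j|\eta-\tfrac{u_j^2}{2}(I_\gamma+J_\gamma))\right)_{j=1}^k$ with $(\xi,\eta)\sim\mathcal{N}(0,I_\gamma)\otimes\mathcal{N}(0,J_\gamma)$. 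By the display above, this is precisely the law of $(\mathrm{d}\mathbb{Q}_{u_j}/\mathrm{d}\mathbb{Q}_0)_{j=1}^k$ under $\mathbb{Q}_0$, which is the required finite-dimensional identification.

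The step I expect to require the most care is not the limit computation itself but the verification that the identified limit is genuinely the likelihood ratio process of an experiment, i.e.\ that no mass is lost in passing to the limit. This is exactly where contiguity enters: since $E_{\mathbb{Q}_0}[\exp(u\xi-\tfrac{u^2}{2}I_\gamma)]=1$ and $E_{\mathbb{Q}_0}[\exp(|u|\eta-\tfrac{u^2}{2}J_\gamma)]=1$ by the Gaussian moment generating function, the limiting likelihood ratios have expectation $1$ under $\mathbb{Q}_0$; combined with the mutual contiguity furnished by Corollary \ref{contiguity}, this is precisely the hypothesis under which Theorem 61.6 of \cite{Strasser1985} upgrades the finite-dimensional convergence of the likelihood ratios to the weak convergence of the experiments $(\mathbb{R}^{2n},\mathcal{B}^{2n},(\mathbb{P}_{n,r_nh})_{h\in\mathbb{R}})$ to $(\mathbb{R}^2,\mathcal{B}^2,(\mathbb{Q}_u)_{u\in\mathbb{R}})$.
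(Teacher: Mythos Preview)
Your proposal is correct and follows precisely the route the paper indicates: the paper itself simply states that Corollary \ref{limit} follows from Corollary \ref{contiguity} and Theorem 61.6 of \cite{Strasser1985}, and you have faithfully unpacked this by computing the limiting likelihood ratios of $(\mathbb{Q}_u)_{u\in\mathbb{R}}$, matching them via \eqref{expansion}--\eqref{CLT} and the continuous mapping/Slutsky argument, and invoking contiguity to justify the reference point $u=0$.
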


\if0
\begin{align*}
&Y^1_i=\sigma_1B^1_{i/n}+\sqrt{v_1}\epsilon^1_i,\qquad
Y^2_i=\sigma_2B^2_{i/n-\vartheta}+\sqrt{v_2}\epsilon^2_i,\\
&[B^1,B^2]_t=\rho t,\qquad
i=1,\dots,n
\end{align*}

\begin{align*}
\widetilde{Y}^k_i=
\left\{\begin{array}{ll}
Y^k_i&\textrm{if }(-1)^{k-1}\vartheta>0,\\
\sigma_kB^k_{i/n}-n|\vartheta|\sigma_k\Delta^n_iB^k+\sqrt{v_k-n\vartheta^2\sigma_k^2+|\vartheta|\sigma^2_k}\epsilon^k_i&\textrm{otherwise},
\end{array}\right.\\
i=1,\dots,n
\end{align*}
\fi

\if0
$(B^1_{i/n},B^2_{i/n},\widetilde{\epsilon}^1_{i,n},\widetilde{\epsilon}^2_{i,n})_{i=1}^n$ is Gaussian

$(\widetilde{\epsilon}^1_{1,n},\widetilde{\epsilon}^2_{1,n})\sim   \mathcal{N}(0,\diag(v_n,v_n))$
\begin{align*}
\left\{\begin{array}{lll}
E[\widetilde{\epsilon}^1_{i,n}\Delta^n_jB^1]=0,&E[\widetilde{\epsilon}^2_{i,n}\Delta^n_jB^2]=-\vartheta1_{\{i=j\}}&\textrm{if }\vartheta\geq0,\\
E[\widetilde{\epsilon}^1_{i,n}\Delta^n_jB^1]=\vartheta1_{\{i=j\}},&E[\widetilde{\epsilon}^2_{i,n}\Delta^n_jB^2]=0&\textrm{if }\vartheta<0
\end{array}\right.
\end{align*}
\fi

Now we turn to the proof of Theorem \ref{thm:main}. Although $(\mathbb{P}_{n,\vartheta})_{\vartheta\in\mathbb{R}}$ consists of Gaussian distributions, the problem is not simple because the covariance matrix $C_n(\vartheta)$ of $\mathbb{P}_{n,\vartheta}$ is a complicated function of the lag parameter $\vartheta$. In particular, $C_n(\vartheta)$ and $C_n(\vartheta')$ are not simultaneously diagonalizable in general (even asymptotically) if $\vartheta\neq\vartheta'$. This could be troublesome because in analysis of Gaussian experiments the (asymptotically) simultaneous diagonalizability of the covariance matrices of the statistical model for different parameters typically plays an important role (cf.~Section 3 of \cite{Davies1973}, Lemma 8.1 of \cite{GJ2001a} and Lemma C.4 of \cite{SSH2014}). For this reason we first transfer from the model $\mathbb{P}_{n,\vartheta}$ to a more tractable model defined as follows: For each $n\in\mathbb{N}$, set $\Theta_n=\{\vartheta\in\mathbb{R}:v_n-n\vartheta^2+|\vartheta|\geq0\}=\{\vartheta\in\mathbb{R}:|\vartheta|\leq(1+\sqrt{1+4nv_n})/(2n)\}$. Then, for each $\vartheta\in\Theta_n$ we denote by $\widetilde{\mathbb{P}}_{n,\vartheta}$ the law of the vector $\widetilde{\mathbf{Z}}_n=(\widetilde{X}_1,\dots,\widetilde{X}_n,\widetilde{Y}_1,\dots,\widetilde{Y}_n)^\top$ defined by
\begin{equation}\label{KLmodel1}
\widetilde{X}_i=B^1_{i/n}+\widetilde{\epsilon}^1_i,\qquad
\widetilde{Y}_i=B^2_{i/n}+\widetilde{\epsilon}^2_i,
\end{equation}
where
\begin{equation}\label{KLmodel2}
\left\{\begin{array}{ll}
\widetilde{\epsilon}^1_{i,n}=\epsilon^1_i,\qquad\widetilde{\epsilon}^2_{i,n}=-n\vartheta(B^2_{i/n}-B^2_{(i-1)/n})+\sqrt{v_n-n\vartheta^2+\vartheta}\epsilon^2_i&\textrm{if }\vartheta\geq0,\\
\widetilde{\epsilon}^1_{i,n}=-n|\vartheta|(B^1_{i/n}-B^1_{(i-1)/n})+\sqrt{v_n-n\vartheta^2+|\vartheta|}\epsilon^1_i,\qquad\widetilde{\epsilon}^2_{i,n}=\epsilon^2_i&\textrm{if }\vartheta<0
\end{array}\right.
\end{equation}
for $i=1,\dots,n$. We denote by $\widetilde{C}_n(\vartheta)$ the covariance matrix of $\widetilde{\mathbb{P}}_{n,\vartheta}$.

In the following we will show that $\mathbb{P}_{n,\vartheta}$ is well-approximated by $\widetilde{\mathbb{P}}_{n,\vartheta}$. To be precise, the Hellinger distance between $\mathbb{P}_{n,\vartheta}$ and $\widetilde{\mathbb{P}}_{n,\vartheta}$ tends to 0 as $n\to\infty$, provided that $\vartheta$ tends to 0 sufficiently fast. Here, the Hellinger distance $H(P,Q)$ between two probability measures $P$ and $Q$ on a measurable space $(\mathcal{X},\mathcal{A})$ is defined by
\[
H(P,Q)=\left(\int_\mathcal{X}\left(\sqrt{\frac{\mathrm{d}P}{\mathrm{d}\mu}}-\sqrt{\frac{\mathrm{d}Q}{\mathrm{d}\mu}}\right)^2\mathrm{d}\mu\right)^{1/2},
\]
where $\mu$ is a $\sigma$-finite measure dominating both $P$ and $Q$ ($\mu=P+Q$ for example). It can easily be checked that $H(P,Q)$ does not depend on the choice of $\mu$. See Appendix A.1 of \cite{Reiss2011}, Section 2 of \cite{Strasser1985} and Section 2.4 of  \cite{Tsybakov2009} for more information about the Hellinger distance.
%The following proposition allows us to work with $\widetilde{\mathbb{P}}_{n,\vartheta}$'s instead of $\mathbb{P}_{n,\vartheta}$'s.  

Throughout the paper, we denote by $\mathbb{E}_{n,\vartheta}$ (resp.~$\widetilde{\mathbb{E}}_{n,\vartheta}$) expectation with respect to $\mathbb{P}_{n,\vartheta}$ (resp.~$\widetilde{\mathbb{P}}_{n,\vartheta}$).
% \section{prop:hellinger}
\begin{prop}\label{prop:hellinger}
\noindent{\normalfont(a)} If $|\vartheta|\leq1/n$, then $\mathbb{P}_{n,\vartheta}=\widetilde{\mathbb{P}}_{n,\vartheta}$. 

\noindent{\normalfont(b)} If $v_n>0$, $H^2(\mathbb{P}_{n,\vartheta},\widetilde{\mathbb{P}}_{n,\vartheta})\leq4v_n^{-2}(4+3\rho^2)n^2|\vartheta|^3$ for any $n\in\mathbb{N}$ and any $\vartheta\in\Theta_n$.

\noindent{\normalfont(c)} If a sequence $\vartheta_n$ of positive numbers satisfies $\vartheta_n=o(n^{-1}\vee N_n^{-\frac{4}{3}})$ as $n\to\infty$, then $\sup_{|\vartheta|\leq\vartheta_n}H(\mathbb{P}_{n,\vartheta},\widetilde{\mathbb{P}}_{n,\vartheta})\to0$.
\end{prop}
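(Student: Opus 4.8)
The plan is to exploit that both $\mathbb{P}_{n,\vartheta}$ and $\widetilde{\mathbb{P}}_{n,\vartheta}$ are centered $2n$-dimensional Gaussian laws, so that the whole proposition is a statement about their covariance matrices $C_n(\vartheta)$ and $\widetilde{C}_n(\vartheta)$. First I would compute both matrices entrywise from the covariance of the bivariate two-sided Brownian motion together with the noise, treating the case $\vartheta\geq0$ and deducing $\vartheta<0$ from the symmetry interchanging $(X,B^1)$ and $(Y,B^2)$. The purpose of \eqref{KLmodel2} is that the drift term $-n\vartheta(B^2_{i/n}-B^2_{(i-1)/n})$ together with the variance correction $v_n-n\vartheta^2+\vartheta$ is chosen so that the within-block covariances of $\widetilde{C}_n(\vartheta)$ reproduce those of $C_n(\vartheta)$, the dominant discrepancy $D:=C_n(\vartheta)-\widetilde{C}_n(\vartheta)$ being carried by the cross ($X$–$Y$) block, where the shifted covariance $\rho\min(i/n,j/n-\vartheta)$ is replaced by $\rho\min(i,j)/n-\rho\vartheta\,\mathbf{1}_{\{i\geq j\}}$. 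A direct comparison shows the nonzero entries of $D$ sit at index pairs whose time points differ by less than $\vartheta$, each of modulus $O(\vartheta)$. Since no such pairs exist once $\vartheta\leq1/n$, this immediately gives part (a): the two covariance matrices coincide, hence so do the two Gaussian laws.

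For part (b) I would combine a Frobenius estimate of $D$ with a Gaussian Hellinger bound. Counting the $O(n^2\vartheta)$ affected entries and using $\sum_{1\leq d<n\vartheta}(n-d)(\vartheta-d/n)^2\leq\tfrac13 n^2\vartheta^3$ gives $\|D\|_F^2\leq 2\rho^2 n^2\vartheta^3$. The crucial structural fact is that in the original model the noise has covariance $v_nE_{2n}$ and is independent of the latent Gaussian part, so $C_n(\vartheta)=G+v_nE_{2n}$ with $G\succeq0$ and therefore $\lambda_{\min}(C_n(\vartheta))\geq v_n$. Writing $M:=C_n(\vartheta)^{-1/2}(\widetilde{C}_n(\vartheta)-C_n(\vartheta))\,C_n(\vartheta)^{-1/2}$, this yields $\|M\|_F\leq v_n^{-1}\|D\|_F$ and $\|M\|_\mathrm{sp}\leq v_n^{-1}\|D\|_\mathrm{sp}$. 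I would then use the exact formula for the Hellinger affinity of two centered Gaussians, which in terms of the eigenvalues $\mu_k>-1$ of $M$ reads $1-\tfrac12H^2=\prod_k(1+\mu_k)^{1/4}(1+\mu_k/2)^{-1/2}$, whence $H^2\leq 2\sum_k g(\mu_k)$ with $g(\mu)=\tfrac12\log(1+\mu/2)-\tfrac14\log(1+\mu)\geq0$. For a suitable $c_0\in(0,1)$ the elementary inequality $g(\mu)\leq\tfrac14\mu^2$ holds on $[-c_0,c_0]$, so when $\|M\|_\mathrm{sp}\leq c_0$ one gets $H^2\leq\tfrac12\|M\|_F^2\leq\tfrac12 v_n^{-2}\|D\|_F^2\leq v_n^{-2}\rho^2n^2\vartheta^3$; and when $\|M\|_\mathrm{sp}>c_0$ the lower bound this forces on $\vartheta$ makes $2v_n^{-2}\rho^2 n^2\vartheta^3$ exceed $2\geq H^2$, so the bound holds trivially. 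This dichotomy — turning the small-perturbation expansion into an estimate valid for every $\vartheta\in\Theta_n$ — is the step I expect to be the main obstacle, since $g(\mu)/\mu^2\to\infty$ as $\mu\to-1$ and one must verify that the trivial regime is entered precisely when the claimed bound becomes vacuous.

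Finally, part (c) follows by feeding (a) and (b) into a case analysis governed by $N_n$. For $|\vartheta|\leq1/n$ the Hellinger distance vanishes by (a), while for $1/n<|\vartheta|\leq\vartheta_n$ part (b) gives $\sup H^2\leq 2v_n^{-2}\rho^2n^2\vartheta_n^3$, so it suffices to show $v_n^{-2}n^2\vartheta_n^3\to0$. When $\gamma<\infty$ one has $N_n=n$ and $n^{-1}\vee N_n^{-4/3}=n^{-1}$, so $\vartheta_n=o(n^{-1})$ forces $\vartheta_n<1/n$ eventually and the supremum is $0$ for large $n$. When $\gamma=\infty$ the identity $n^2/v_n^2=N_n^4$ gives $v_n^{-2}n^2\vartheta_n^3=(\vartheta_n/N_n^{-4/3})^3$; on the range of $n$ where $\vartheta_n>1/n$ the hypothesis $\vartheta_n=o(n^{-1}\vee N_n^{-4/3})$ forces $N_n^{-4/3}$ to dominate $n^{-1}$, hence $\vartheta_n=o(N_n^{-4/3})$, and this cube tends to $0$. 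In either case $\sup_{|\vartheta|\leq\vartheta_n}H(\mathbb{P}_{n,\vartheta},\widetilde{\mathbb{P}}_{n,\vartheta})\to0$, which is the assertion.
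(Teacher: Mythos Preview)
Your approach is essentially the paper's: compute both covariance matrices, see that the discrepancy $D=C_n(\vartheta)-\widetilde C_n(\vartheta)$ lives only in the cross block with entries $\rho((j-i)/n-\vartheta)$ on the band $\{i<j\le i+n\vartheta\}$, deduce (a) from the vanishing of that band when $|\vartheta|\le1/n$, bound $\|D\|_F^2$ by $O(\rho^2n^2\vartheta^3)$, use $\lambda_{\min}(C_n(\vartheta))\ge v_n$, and run the case split on $\gamma$ for (c). The only real difference is in how you justify the Gaussian Hellinger inequality for (b): the paper simply quotes Eqs.~(A.4) and (A.6) of Rei{\ss} (2011), which give $H^2\le 2\|C_n(\vartheta)^{-1/2}DC_n(\vartheta)^{-1/2}\|_F^2$ outright, whereas you rebuild it from the exact affinity formula via the function $g(\mu)=\tfrac12\log(1+\mu/2)-\tfrac14\log(1+\mu)$ and a small/large-$\|M\|_{\mathrm{sp}}$ dichotomy. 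That rederivation is sound in the small regime, but your trivial-regime step does not quite close with the stated constant $2$: from $\|M\|_{\mathrm{sp}}>c_0$ and $\|M\|_{\mathrm{sp}}\le v_n^{-1}\|D\|_F$ you only obtain $2v_n^{-2}\rho^2n^2\vartheta^3>c_0^2$, and since $g(\mu)\le\tfrac14\mu^2$ breaks down before $\mu=-1$ you cannot push $c_0$ up to $\sqrt{2}$. This is harmless for (c) and for every use of the proposition in the paper (any absolute constant would do), but if you want exactly the inequality in (b) it is cleaner to invoke the Rei{\ss} bound as the paper does.
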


\begin{proof}
Claim (c) immediately follows from (a) and (b), so we focus on (a) and (b). By symmetry we may assume $\vartheta\geq0$. Let $x_1,\dots,x_n,y_1,\dots,y_n$ be the canonical variables on $\mathbb{R}^{2n}$. Then we have $\mathbb{E}_{n,\vartheta}[x_ix_j]=\widetilde{\mathbb{E}}_{n,\vartheta}[x_ix_j]$ for all $i,j$. Moreover, a simple computation shows that
\[
\mathbb{E}_{n,\vartheta}[y_iy_j]=
\left\{
\begin{array}{ll}
(\frac{i\wedge j}{n}-\vartheta)+v_n1_{\{i=j\}}&\textrm{if }i\wedge j\geq n\vartheta,\\
 (\vartheta-\frac{i\vee j}{n})_++v_n1_{\{i=j\}}&\textrm{otherwise}
\end{array}
\right.
\]
\if0
\begin{align*}
&\mathbb{E}_{n,\vartheta}[x_ix_j]=\frac{i\wedge j}{n}+v_n1_{\{i=j\}},\qquad
\mathbb{E}_{n,\vartheta}[y_iy_j]=\left(\frac{i\wedge j}{n}-\vartheta\right)+v_n1_{\{i=j\}},\\
&\mathbb{E}_{n,\vartheta}[x_iy_j]=\rho\frac{i\wedge (j-n\vartheta)}{n}
\end{align*}
\fi
\if0
\begin{align*}
\mathbb{E}_{n,\vartheta}[y_iy_j]=\left(\frac{i\wedge j}{n}-\vartheta\right)+v_n1_{\{i=j\}},\qquad
\mathbb{E}_{n,\vartheta}[x_iy_j]=\rho\frac{i\wedge (j-n\vartheta)}{n}
\end{align*}
and
\begin{align*}
\widetilde{\mathbb{E}}_{n,\vartheta}[y_iy_j]&=\left(\frac{i\wedge j}{n}-\vartheta1_{\{i\geq j\}}-\vartheta1_{\{i\leq j\}}\right)+(v_n+\vartheta)1_{\{i=j\}}
=\mathbb{E}_{n,\vartheta}[y_iy_j],\\
\widetilde{\mathbb{E}}_{n,\vartheta}[x_iy_j]&=\rho\left(\frac{i\wedge j}{n}-\vartheta1_{\{i\geq j\}}\right).
\end{align*}
In particular, we have
\fi
and
\begin{align*}
\widetilde{\mathbb{E}}_{n,\vartheta}[y_iy_j]&=\left(\frac{i\wedge j}{n}-\vartheta1_{\{i\geq j\}}-\vartheta1_{\{i\leq j\}}\right)+(v_n+\vartheta)1_{\{i=j\}}
=\left(\frac{i\wedge j}{n}-\vartheta\right)+v_n1_{\{i=j\}}
%=\mathbb{E}_{n,\vartheta}[y_iy_j]
\end{align*}
and
\if0
\begin{align*}
&\mathbb{E}_{n,\vartheta}[x_ix_j]=\widetilde{\mathbb{E}}_{n,\vartheta}[x_ix_j]=\frac{i\wedge j}{n}+v_n1_{\{i=j\}},\qquad
\mathbb{E}_{n,\vartheta}[y_iy_j]=\widetilde{\mathbb{E}}_{n,\vartheta}[y_iy_j]=\left(\frac{i\wedge j}{n}-\vartheta\right)+v_n1_{\{i=j\}}
\end{align*}
and
\fi
\[
\mathbb{E}_{n,\vartheta}[x_iy_j]=
\left\{
\begin{array}{ll}
\rho i/n&\textrm{if }i< j-n\vartheta,\\
\rho(j/n-\vartheta)_+&\textrm{otherwise},
\end{array}
\right.
\qquad
\widetilde{\mathbb{E}}_{n,\vartheta}[x_iy_j]=
\left\{
\begin{array}{ll}
\rho i/n&\textrm{if }i< j,\\
\rho(j/n-\vartheta)&\textrm{otherwise}.
\end{array}
\right.
\]
Therefore, we have $C_n(\vartheta)=\widetilde{C}_n(\vartheta)$ if $\vartheta\leq1/n$. This yields claim (a) because both $\mathbb{P}_{n,\vartheta}$ and $\widetilde{\mathbb{P}}_{n,\vartheta}$ are centered Gaussian. On the other hand, from the above identities we also have
\if0
\begin{align*}
\|C_n(\vartheta)-\widetilde{C}_n(\vartheta)\|_F^2
=\sum_{i=1}^n\sum_{j:i<j\leq i+n\vartheta}\left|\rho\left(\frac{j-i}{n}-\vartheta\right)\right|^2
\leq  n\cdot \rho^2\cdot(n\vartheta)^3\cdot n^{-2}
=\rho^2n^2\vartheta^3.
\end{align*}
\fi
\begin{align*}
&\|C_n(\vartheta)-\widetilde{C}_n(\vartheta)\|_F^2\\
&=\sum_{\begin{subarray}{c}
i,j=1\\
i\wedge j<n\vartheta
\end{subarray}}^n\left|\left(\vartheta-\frac{i\vee j}{n}\right)_+-\left(\frac{i\wedge j}{n}-\vartheta\right)\right|^2\\
&\qquad+2\sum_{i=1}^n\left\{\sum_{j:i<j\leq i+n\vartheta}\left|\rho\left\{\left(\frac{j}{n}-\vartheta\right)_+-\frac{i}{n}\right\}\right|^2
+\sum_{j:j\leq i}\left|\rho\left\{\left(\frac{j}{n}-\vartheta\right)_+-\left(\frac{j}{n}-\vartheta\right)\right\}\right|^2\right\}\\
&=\sum_{\begin{subarray}{c}
i,j=1\\
i\wedge j<n\vartheta
\end{subarray}}^n\left|\left(\vartheta-\frac{i\vee j}{n}\right)_+-\left(\frac{i\wedge j}{n}-\vartheta\right)\right|^2\\
&\qquad+2\sum_{i=1}^n\left\{\sum_{j:i<j\leq n\vartheta}\left|\rho\frac{i}{n}\right|^2
+\sum_{j:i\vee n\vartheta<j\leq i+n\vartheta}\left|\rho\left(\frac{j-i}{n}-\vartheta\right)\right|^2
+\sum_{j:j\leq i\wedge n\vartheta}\left|\rho\left(\frac{j}{n}-\vartheta\right)\right|^2\right\}\\
&\leq 2n\cdot n\vartheta\cdot\left(\frac{2n\vartheta}{n}\right)^2+ 6n\cdot \rho^2\cdot n\vartheta\cdot\left(\frac{n\vartheta}{n}\right)^2
=(8+6\rho^2)n^2\vartheta^3.
\end{align*}
Now if $v_n>0$, $C_n(\vartheta)$ is positive semidefinite and satisfies $\|C_n(\vartheta)^{-\frac{1}{2}}\|_\mathrm{sp}\leq v_n^{-\frac{1}{2}}$ by the monotonicity theorem for eigenvalues (Corollary 4.3.3 of \cite{HJ1985}) because $C_n(\vartheta)-v_nE_{2n}$ is positive semidefinite. Therefore, by Eqs.(A.4) and (A.6) from \cite{Reiss2011} we obtain
\begin{align*}
H^2(\mathbb{P}_{n,\vartheta},\widetilde{\mathbb{P}}_{n,\vartheta})
\leq2\|C_n(\vartheta)^{-\frac{1}{2}}(C_n(\vartheta)-\widetilde{C}_n(\vartheta))C_n(\vartheta)^{-\frac{1}{2}}\|_F^2
\leq4v_n^{-2}(4+3\rho^2)n^2\vartheta^3,
\end{align*}
hence claim (b) holds true.
\end{proof}

In the following we will frequently use the fact that the Hellinger distance dominates the total variation distance:
\begin{equation}\label{eq:A1}
V(P,Q)\leq H(P,Q),
\end{equation}
where $V(P,Q)=\sup_{A\in\mathcal{A}}|P(A)-Q(A)|$. See e.g.~Lemma 2.3 of \cite{Tsybakov2009} for the proof. The following properties of the total variation distance are immediate consequences of the definition and important for our purpose. For each $n\in\mathbb{N}$, let $P_n$ and $Q_n$ be two sequences of probability measures on a measurable space $(\mathcal{X}_n,\mathcal{A}_n)$, and let $\zeta_n$ be a random variable on $(\mathcal{X}_n,\mathcal{A}_n)$ taking its value in a metric space $D$. Then, for any $a\in D$ and any probability measure $\mu$ on $D$, the following statements hold true:  
\begin{equation}\label{total}
\left.\begin{array}{lcl}
V(P_n,Q_n)\to0,~\zeta_n\xrightarrow{p}a~\text{ under }~P_n &\Rightarrow&\xi_n\xrightarrow{p}a~\text{ under }~Q_n,\\
V(P_n,Q_n)\to0,~\zeta_n\xrightarrow{d}\mu~\text{ under }~P_n &\Rightarrow&\xi_n\xrightarrow{d}\mu~\text{ under }~Q_n.
\end{array}\right\}
\end{equation}
%In fact, the first claim is a special case of the second one, and the second one follows from e.g.~Lemma 2.1 of \cite{Tsybakov2009}.

\if0
$K_n=((i\wedge j))_{1\leq i,j\leq n}$, $J_n=(1_{\{i\geq j\}})_{1\leq i,j\leq n}$
\[
K_n=J_nJ_n^\top
\]

\[
\widetilde{C}_n(\vartheta)=
\left[
\begin{array}{cc}
\Delta_nK_n+v E_n  & \rho\Delta_n K_n \\
\rho\Delta_n K_n & \Delta_nK_n+v E_n
\end{array}
\right]
-
\vartheta\left[
\begin{array}{cc}
0  & \rho J_n   \\
\rho J_n^\top  & 1_n1_n^\top
\end{array}
\right]
=:\bar{K}-\vartheta\bar{J}
\]
\fi

Next we express the covariance matrix $\widetilde{C}_n(\vartheta)$ of $\widetilde{\mathbb{P}}_{n,\vartheta}$ as a tractable form. For this purpose we introduce some notation. The $n\times n$ matrix $\nabla_n$ denotes the backward difference operator, i.e.
\[
\nabla_n=
\left[
\begin{array}{cccc}
1  &   &  & \\
-1  &  1 &  &  \\
  &  \ddots & \ddots & \\
  &              & -1 & 1  
\end{array}
\right].
\]
%$\widetilde{\mathbf{X}}_n:=\nabla_n\mathbf{X}_n$, $\widetilde{\mathbf{Y}}_n:=\nabla_n\mathbf{Y}_n$, $\widehat{\mathbf{Z}}_n:=(\widetilde{\mathbf{X}}_n^\top,\widetilde{\mathbf{Y}}_n^\top)^\top$
We set $\widehat{\mathbf{Z}}_n:=(\nabla_n\oplus\nabla_n)\widetilde{\mathbf{Z}}_n=(\widetilde{X}_1,\widetilde{X}_2-\widetilde{X}_1,\dots,\widetilde{X}_n-\widetilde{X}_{n-1},\widetilde{Y}_1,\widetilde{Y}_2-\widetilde{Y}_1,\dots,\widetilde{Y}_n-\widetilde{Y}_{n-1})^\top$ and denote by $V_n(\vartheta)$ the covariance matrix of $\widehat{\mathbf{Z}}_n$, i.e.~$V_n(\vartheta)=(\nabla_n\oplus\nabla_n)\widetilde{C}_n(\vartheta)(\nabla_n\oplus\nabla_n)^\top$. $V_n(\vartheta)$ can explicitly be expressed as $V_n(\vartheta)=\bar{G}_n-\rho\vartheta\bar{\nabla}_n^{\mathrm{sign}(\vartheta)}$, where 
\[
\bar{G}_n=\left[
\begin{array}{cc}
G_n  & \frac{\rho}{n}E_n \\
\frac{\rho}{n}E_n & G_n
\end{array}
\right],\qquad
\bar{\nabla}_n^+=\left[
\begin{array}{cc}
0  & \nabla_n^\top   \\
\nabla_n  & \rho^{-1}R_n
\end{array}
\right],\qquad
\bar{\nabla}_n^-=-\left[
\begin{array}{cc}
\rho^{-1}R_n  & \nabla_n   \\
\nabla_n^\top  & 0
\end{array}
\right]
\]
with $G_n=\frac{1}{n}E_n+v_n F_n$, $F_n=\nabla_n\nabla_n^\top$ and
\[
R_n=
\left[
\begin{array}{cccc}
1  & 0  & \cdots & 0 \\
0  &  0 &  \cdots & 0 \\
\vdots  &  \ddots & \ddots & \vdots \\
0  &  \cdots   & 0 & 0 
\end{array}
\right].
\]
It is more convenient to rewrite the expression $\bar{\nabla}_n^{\mathrm{sign}(\vartheta)}$ as follows. Let $S_n$ and $T_n$ be the symmetric and skew-symmetric parts of $2\nabla_n^\top$, respectively. That is, $S_n=\nabla_n^\top+\nabla_n$ and $T_n=\nabla_n^\top-\nabla_n$. Then we set
\[
\bar{S}_n=\frac{1}{2}\left[
\begin{array}{cc}
\rho^{-1}R_n  & S_n   \\
S_n  & \rho^{-1}R_n
\end{array}
\right],\qquad
\bar{T}_n=\frac{1}{2}\left[
\begin{array}{cc}
-\rho^{-1}R_n  & T_n   \\
-T_n  & \rho^{-1}R_n
\end{array}
\right].
\]
We can easily check $\bar{\nabla}_n^\pm=\bar{T}_n\pm\bar{S}_n$, so we obtain $V_n(\vartheta)=\bar{G}_n-\rho(\vartheta\bar{T}_n+|\vartheta|\bar{S}_n)$. This is a simple function of $\vartheta$, so $V_n(\vartheta)$ is more tractable than $C_n(\vartheta)$: Although $V_n(\vartheta)$'s are not simultaneously diagonalizable for different $\vartheta$'s, it is sufficient to consider a relationship between the matrices $\bar{G}_n$, $\bar{T}_n$ and $\bar{S}_n$. In fact, it turns out that the following result is sufficient for our purpose. 
% \section{prop:main}
\begin{prop}\label{prop:main}
For any $\alpha,\beta\in\mathbb{R}$, we have $\|\bar{G}_n^{-\frac{1}{2}}(\alpha\bar{T}_n+\beta\bar{S}_n)\bar{G}_n^{-\frac{1}{2}}\|_\mathrm{sp}=O(N_n)$ as $n\to\infty$ and
\begin{equation}\label{information}
\lim_{n\to\infty}\rho^2r_n^2\|\bar{G}_n^{-\frac{1}{2}}(\alpha\bar{T}_n+\beta\bar{S}_n)\bar{G}_n^{-\frac{1}{2}}\|_F^2=2(\alpha^2I_\gamma+\beta^2J_\gamma).
\end{equation}
\if0
\[
\lim_{n\to\infty}r_n^2\|\bar{G}_n^{-\frac{1}{2}}(\alpha\bar{\nabla}^+_n+\beta\bar{\nabla}^-_n)\bar{G}_n^{-\frac{1}{2}}\|_F^2=
\left\{
\begin{array}{ll}
\frac{(\alpha-\beta)^2}{6}\left\{\frac{1}{(1+\rho)^3}+\frac{1}{(1-\rho)^3}\right\}+\frac{2(\alpha+\beta)^2}{1-\rho^2}  & \mathrm{if}~\gamma=0,  \\
\frac{(\alpha-\beta)^2}{32\gamma^2}\left(4-3\left(\frac{1}{\sqrt{1+4\gamma/(1+\rho)}}+\frac{1}{\sqrt{1+4\gamma/(1-\rho)}}\right)\right. &   \\
\left.+\frac{1}{(1+4\gamma/(1+\rho))^{3/2}}+\frac{1}{(1+4\gamma/(1-\rho))^{3/2}}\right) & \\
+(\alpha+\beta)^2\frac{\sqrt{(1+\rho)(1+\rho+4\gamma)}-\sqrt{(1-\rho)(1-\rho+4\gamma)}}{2\gamma^2\rho} &\mathrm{if}~0<\gamma<\infty,\\
\frac{2(\alpha+\beta)^2}{\sqrt{1+\rho}+\sqrt{1-\rho}}  & \mathrm{if}~\gamma=\infty.   
\end{array}
\right.
\]
\fi
%$\lim_{n\to\infty}(v\Delta_n)^{3/2}\|\bar{G}_n^{-\frac{1}{2}}\bar{H}_n\bar{G}_n^{-\frac{1}{2}}\|^2_F=8/(\sqrt{1+\rho}+\sqrt{1-\rho})$
\end{prop}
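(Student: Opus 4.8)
The plan is to turn the two assertions into spectral estimates for scalar difference operators. Write $M:=\alpha\bar T_n+\beta\bar S_n$. Since $\bar T_n$ and $\bar S_n$ are both symmetric (using $T_n^\top=-T_n$ and $S_n^\top=S_n$), the matrix $\bar G_n^{-1/2}M\bar G_n^{-1/2}$ is symmetric and similar to $\bar G_n^{-1}M$; hence
\[
\|\bar G_n^{-1/2}M\bar G_n^{-1/2}\|_F^2=\trace\big((\bar G_n^{-1}M)^2\big),
\]
while its spectral norm is the largest modulus among the (real) eigenvalues of $\bar G_n^{-1}M$. First I would decouple the two coordinates by conjugating with the orthogonal involution $H\otimes E_n$, where $H=\tfrac1{\sqrt2}\left(\begin{smallmatrix}1&1\\1&-1\end{smallmatrix}\right)$ acts across the two $n$-blocks. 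Because $\bar G_n$ has a scalar off-diagonal block $\rho n^{-1}E_n$, this conjugation makes it block diagonal, $\diag(G_n^+,G_n^-)$ with $G_n^\pm:=\tfrac{1\pm\rho}{n}E_n+v_nF_n$, and carries $\bar S_n,\bar T_n$ into matrices with the same $2\times2$ block pattern whose blocks are $\pm$-combinations of $F_n,S_n,T_n,R_n$. Exploiting the identity $S_n=F_n+R_n$, the trace then splits into diagonal pieces governed by $(G_n^\pm)^{-1}F_n$ and a single off-diagonal cross piece governed by $(G_n^+)^{-1}T_n(G_n^-)^{-1}T_n$.

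The second step is to discard the boundary corrections: every surviving occurrence of $R_n$ is a bounded rank-one matrix, so its contribution to the trace is of lower order and is annihilated by the factor $r_n^2=N_n^{-3}$. This reduces \eqref{information} to the two archetypal quantities $\trace(((G_n^\pm)^{-1}F_n)^2)$ and $\trace((G_n^+)^{-1}T_n(G_n^-)^{-1}T_n)$. The first is immediate because $(G_n^\pm)^{-1}$ is a function of $F_n$: writing $\lambda_1,\dots,\lambda_n$ for the eigenvalues of $F_n$ (of the form $4\sin^2(\theta_k/2)$ with $\theta_k$ a regular grid in $(0,\pi)$), it equals $\sum_k\big(\lambda_k/((1\pm\rho)/n+v_n\lambda_k)\big)^2$, and I would show $r_n^2$ times this converges to $J^0_\gamma(1\pm\rho)$, which after collecting the $\pm$ contributions reproduces the $\beta^2J_\gamma$ term. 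The two regimes are separated by $N_n$: for $\gamma<\infty$ the sum is a Riemann sum over $(0,\pi)$, while for $\gamma=\infty$ each summand is $O(v_n^{-2})$ over $O(n)$ modes, so after scaling it is $O((nv_n)^{-1/2})\to0$, which is exactly why $J_\gamma=0$ in that case.

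The crux is the cross term, since $T_n=\nabla_n^\top-\nabla_n$ neither commutes with nor is a function of $F_n=\nabla_n\nabla_n^\top$, and so is not diagonalized in the eigenbasis of $F_n$. Here I would pass to the singular value decomposition $\nabla_n=U\Sigma W^\top$, under which $F_n=U\Sigma^2U^\top$, $(G_n^\pm)^{-1}=U\,h^\pm(\Sigma^2)\,U^\top$ with $h^\pm(\lambda)=((1\pm\rho)/n+v_n\lambda)^{-1}$, and $T_n=W\Sigma U^\top-U\Sigma W^\top$. Substituting these, the cross term becomes a sum weighted by the singular values together with the orthogonal coupling matrix $O:=U^\top W$ relating the cosine and sine bases of the first-difference operator. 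The content of the argument is to compute the asymptotically relevant matrix elements of $O$ and thereby replace the trace by the frequency integral $-\tfrac{n}{\pi}\int_0^\pi 4\sin^2\theta/(g^+(\theta)g^-(\theta))\,d\theta$ with $g^\pm(\theta)=(1\pm\rho)/n+v_n(2-2\cos\theta)$; scaling by $r_n^2$ and evaluating the resulting rational integral by residues then yields the $\alpha^2I_\gamma$ term in all three cases of \eqref{def:I}, including the concentration near $\theta=0$ at scale $(nv_n)^{-1/2}$ that produces the $\gamma=\infty$ formula. I expect this coupling analysis—equivalently, the uniform control of the non-Toeplitz and boundary errors in $(G_n^+)^{-1}T_n(G_n^-)^{-1}T_n$ across both $\gamma$-regimes—to be the main obstacle.

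For the spectral-norm claim I would work again at the level of symbols. The eigenvalues of the symmetrized matrix are controlled by $\sup_\theta(2-2\cos\theta)/g^\pm(\theta)$ for the $F_n$-part and by $\sup_\theta\sin\theta/\sqrt{g^+(\theta)g^-(\theta)}$ for the $T_n$-part. The former is $O(n)=O(N_n)$ when $\gamma<\infty$ and $O(v_n^{-1})=o(N_n)$ when $\gamma=\infty$; the latter attains its maximum at the crossover frequency $\theta\sim(nv_n)^{-1/2}$ and is $O(N_n)$ in both regimes. Since the naive bound $\|(G_n^\pm)^{-1}\|_\mathrm{sp}\|T_n\|_\mathrm{sp}=O(n)$ is sharp only for $\gamma<\infty$, this frequency-localized estimate is precisely what is needed to obtain the rate $O(N_n)$ when $\gamma=\infty$.
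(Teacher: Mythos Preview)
Your block-diagonalization via $H\otimes E_n$ is the paper's first move (packaged there together with the DCT $U_n$ into a single orthogonal $\bar U_n$), and your treatment of the $\bar S_n$-contribution through the eigenvalues of $F_n$ coincides with Lemma~\ref{lemma:lemmaS}. For the $\bar T_n$-cross term your SVD reframing is equivalent to the paper's route: since your $U$ is the DCT diagonalizing $F_n$, the coupling matrix $O=U^\top W$ encodes the same data as the entries $(U_nT_nU_n)^{ij}$, which the paper computes explicitly as a Dirichlet-type kernel in \eqref{eq:summation} and then sums via the Becker--Stark inequality (Lemmas~\ref{lemma:BS} and~\ref{lemma:lemmaT}). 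So for \eqref{information} the two strategies agree, and the obstacle you flag is exactly the content of Lemma~\ref{lemma:lemmaT}; your target frequency integral is what that lemma ultimately produces.

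The spectral-norm argument, however, has a real gap. Your symbol bound $\sup_\theta\sin\theta/\sqrt{g^+(\theta)g^-(\theta)}$ would be valid if $T_n$ were a function of $F_n$, but you have already observed that it is not; the bound is therefore only heuristic and, to be made rigorous, would require the very coupling analysis you deferred for the Frobenius norm. The paper bypasses any frequency-localized estimate for $T_n$ with a one-line monotonicity trick: $V_n(\vartheta)=\bar G_n-\rho(\vartheta\bar T_n+|\vartheta|\bar S_n)$ is a covariance matrix and hence positive semidefinite for every $\vartheta\in\Theta_n$. Taking $\vartheta=\pm N_n^{-1}\in\Theta_n$, Weyl's monotonicity theorem gives
\[
|\rho|N_n^{-1}\|\bar G_n^{-1/2}\bar T_n\bar G_n^{-1/2}\|_{\mathrm{sp}}
\le \|E_{2n}-|\rho|N_n^{-1}\bar G_n^{-1/2}\bar S_n\bar G_n^{-1/2}\|_{\mathrm{sp}}
\le 1+|\rho|N_n^{-1}\|\bar G_n^{-1/2}\bar S_n\bar G_n^{-1/2}\|_{\mathrm{sp}},
\]
which together with the already-established bound $\|\bar G_n^{-1/2}\bar S_n\bar G_n^{-1/2}\|_{\mathrm{sp}}=O(N_n)$ forces $\|\bar G_n^{-1/2}\bar T_n\bar G_n^{-1/2}\|_{\mathrm{sp}}=O(N_n)$ without ever touching the off-diagonal symbol.
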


The proof of Proposition \ref{prop:main} consists of elementary but complicated calculations, so we postpone it to the Appendix (Section \ref{section:technical}). 
We remark that the proof requires a calculation essentially different from that of the Fisher information for the scale parameter estimation from observations of the form \eqref{GJmodel}  such as in \cite{GJ2001a} and \cite{SSH2014} (see Remark \ref{fisher}). 
Also, note that Proposition \ref{prop:main} yields the invertibility of $V_n(\vartheta_n)$ for sufficiently large $n$ if $\vartheta_n=o(N_n^{-1})$ because $V_n(\vartheta_n)=\bar{G}_n^{\frac{1}{2}}(E_{2n}-\rho\bar{G}_n^{-\frac{1}{2}}(\vartheta_n\bar{T}_n+|\vartheta_n|\bar{S}_n)\bar{G}_n^{-\frac{1}{2}})\bar{G}_n^{\frac{1}{2}}$.

\begin{proof}[\upshape{\textbf{Proof of Theorem \ref{thm:main}}}]
Define the function $\widehat{z}_n:\mathbb{R}^{2n}\to\mathbb{R}^{2n}$ by setting $\widehat{z}_n(\zeta)=(\nabla_n\oplus\nabla_n)\zeta$ for $\zeta\in\mathbb{R}^{2n}$. Then we set
\begin{equation}\label{ts}
\mathsf{T}_n=-\frac{\rho}{2}r_n\left\{\widehat{z}_n^\top\bar{G}_n^{-1}\bar{T}_n\bar{G}_n^{-1}\widehat{z}_n-\trace(\bar{G}_n^{-1}\bar{T}_n)\right\},\qquad
\mathsf{S}_n=-\frac{\rho}{2}r_n\left\{\widehat{z}_n^\top\bar{G}_n^{-1}\bar{S}_n\bar{G}_n^{-1}\widehat{z}_n-\trace(\bar{G}_n^{-1}\bar{S}_n)\right\}.
\end{equation}
By virtue of Proposition \ref{prop:hellinger} and \eqref{eq:A1}--\eqref{total}, it suffices to prove the following statements:
\begin{align}
&\log\frac{\mathrm{d}\mathbb{P}_{n,r_nu_n}}{\mathrm{d}\mathbb{P}_{n,0}}
-\left\{u_n\mathsf{T}_n+|u_n|\mathsf{S}_n-\frac{u_n^2}{2}(I_\gamma+J_\gamma)\right\}\xrightarrow{p}0~\text{ under }~\widetilde{\mathbb{P}}_{n,0},\label{expansion1}\\
&(\mathsf{T}_n,\mathsf{S}_n)\xrightarrow{d}  \mathcal{N}(0,I_\gamma)\otimes   \mathcal{N}(0,J_\gamma)~\text{ under }~\widetilde{\mathbb{P}}_{n,0}.\label{normality}
\end{align}
\eqref{normality} follows from Proposition \ref{prop:main} and Proposition 2 of \cite{DY2011}. On the other hand, setting $A_n=\rho r_n\bar{G}_n^{-\frac{1}{2}}(u_n\bar{T}_n+|u_n|\bar{S}_n)\bar{G}_n^{-\frac{1}{2}}$, we have $\|A_n\|_F^2-2u_n^2(I_\gamma+J_\gamma)\to0$ by Proposition \ref{prop:main}. Therefore, by Proposition \ref{prop:hellinger}, \eqref{eq:A1} and Proposition 2 from Chapter 4 of \cite{LeCam1986} we obtain \eqref{expansion1} once we show that
\begin{equation}\label{expansion2}
\xi_n:=\log\frac{\mathrm{d}\widetilde{\mathbb{P}}_{n,r_nu_n}}{\mathrm{d}\widetilde{\mathbb{P}}_{n,0}}
-\left\{u_n\mathsf{T}_n+|u_n|\mathsf{S}_n-\frac{\|A_n\|_F^2}{4}\right\}\xrightarrow{p}0~\text{ under }~\widetilde{\mathbb{P}}_{n,0}.
\end{equation}

The strategy of the proof of \eqref{expansion2} is the same as that of Theorem 4.2 from \citet{Davies1973}. First, by Eq.(4.3) of \cite{Davies1973}, for sufficiently large $n$ we have
\begin{align*}
\widetilde{\mathbb{E}}_{n,0}[\xi_n]&=-\frac{1}{2}\left\{\log\det V_n(r_nu_n)-\log\det V_n(0)+\trace(V_n(0)(V_n(r_nu_n)^{-1}-V_n(0)^{-1}))\right\}+\frac{\|A_n\|_F^2}{4}\\
&=-\frac{1}{2}\left[\left\{\log\det(E_{2n}-A_n)+\trace(A_n)+\frac{1}{2}\trace(A_n^2)\right\}
+\trace((E_{2n}-A_n)^{-1}-E_{2n}-A_n-A_n^2)\right].
\end{align*}
Note that it holds that $(E_{2n}-A_n)^{-1}=\sum_{p=0}^\infty A_n^p$ for sufficiently large $n$ because $\|A_n\|_\mathrm{sp}\to0$ as $n\to\infty$ by Proposition \ref{prop:main}. Combining this fact with inequality (v) from Appendix II of \cite{Davies1973}, we obtain
\begin{align*}
|\widetilde{\mathbb{E}}_{n,0}[\xi_n]|\leq
\frac{1}{2}\|A_n\|_\mathrm{sp}\|A_n\|_F^2\left\{\frac{1}{3}\frac{1}{(1-\|A_n\|_\mathrm{sp})^3}+\frac{1}{1-\|A_n\|_\mathrm{sp}}\right\}
\end{align*}
for sufficiently large $n$. Hence Proposition \ref{prop:main} yields $\widetilde{\mathbb{E}}_{n,0}[\xi_n]\to0$.

Next, noting that $\xi_n$ can be rewritten as
\begin{align*}
\xi_n=-\frac{1}{2}\left(\widehat{z}_n^\top B_n\widehat{z}_n-\widetilde{E}_{n,0}[\widehat{z}_n^\top B_n\widehat{z}_n]\right)+\widetilde{E}_{n,0}[\xi_n],
\end{align*}
where $B_n=V_n(r_nu_n)^{-1}-V_n(0)^{-1}-\bar{G}_n^{-\frac{1}{2}}A_n\bar{G}_n^{-\frac{1}{2}},$ we obtain from Eq.(4.4) of \cite{Davies1973}
\begin{align*}
2\variance_{\widetilde{\mathbb{P}}_{n,0}}[\xi_n]&=\left\|V_n(0)^{\frac{1}{2}}(V_n(r_nu_n)^{-1}-V_n(0)^{-1})V_n(0)^{\frac{1}{2}}-A_n\right\|_F^2
=\left\|(E_{2n}-A_n)^{-1}-E_{2n}-A_n\right\|_F^2.
\end{align*}
Therefore, using the identity $(E_{2n}-A_n)^{-1}=\sum_{p=0}^\infty A_n^p$ again we obtain $2\variance_{\widetilde{\mathbb{P}}_{n,0}}[\xi_n]\leq\|A_n\|_\mathrm{sp}^2\|A_n\|_F^2(1-\|A_n\|_\mathrm{sp})^{-2}$ for sufficiently large $n$. Hence Proposition \ref{prop:main} again yields $\variance_{\widetilde{\mathbb{P}}_{n,0}}[\xi_n]\to0$, and we obtain \eqref{expansion2}.
\end{proof}

We finish this section with some remarks.
% HRY's intuition
\begin{rmk}\label{intuition}
It is worth mentioning that we can infer from \cite{HRY2013} why the rate $n^{-\frac{3}{2}}$ is the proper convergence rate of our model in the case of $v_n\equiv0$ as follows. Let us set
\[
\mathcal{U}^n(\vartheta)=\sum_{i,j=1}^{n-1}(X_{i+1}-X_i)(Y_{j+1}-Y_j)1_{\{[\frac{i}{n},\frac{i+1}{n})\cap[\frac{j}{n}-\vartheta,\frac{j+1}{n}-\vartheta)\neq\emptyset\}}
\]
for $\vartheta\in\mathbb{R}$. The principle used in \cite{HRY2013} is that $\mathcal{U}^n(\vartheta)$ is close to the true correlation $\rho$ if and only if $\vartheta$ is close to the true time-lag parameter. Since the accuracy of estimating the correlation parameter is of order $1/\sqrt{n}$, we naturally consider the quantity $|\sqrt{n}(\mathcal{U}^n(\vartheta)-\rho)|$ to measure the distance between $\mathcal{U}^n(\vartheta)$ and $\rho$: $|\sqrt{n}(\mathcal{U}^n(\vartheta)-\rho)|$ would take a large value if $\vartheta$ is not sufficiently close to the true time-lag parameter. In fact, Proposition 1 from \cite{HRY2013} implies that $|\sqrt{n}(\mathcal{U}^n(\vartheta)-\rho)|$ does not diverge if and only if the distance between $\vartheta$ and the true time-lag parameter is at most of order $n^{-\frac{3}{2}}$. This information allows us to estimate the true time-lag parameter with the accuracy of order $n^{-\frac{3}{2}}$.
\end{rmk}

% remark: endogenous
\begin{rmk}
From an econometric point of view, Proposition \ref{prop:hellinger} is of independent interest because the model given by \eqref{KLmodel1}--\eqref{KLmodel2} has an economic interpretation different from model \eqref{model}. This model contains measurement errors correlated to the latent returns $B_{i/n}-B_{(i-1)/n}$. % which is often referred as endogenous noise in the literature of the integrated volatility estimation. 
% Volatility estimation
The integrated volatility estimation in the presence of this type of measurement error has been studied by \citet{KL2008} for example. 
% Market microstructure theory
%In the market microstructure theory, such a correlation is often explained as an effect of asymmetric information (e.g.~\citet{Glosten1987}), but it could arise for various reasons as illustrated in \citet{Diebold2006HL} and \citet{DS2013}.  
In the market microstructure theory, such a correlation is often explained as an effect of asymmetric information (e.g.~\citet{Glosten1987}). Interestingly, some economic arguments suggest that such an information asymmetry would cause a lead-lag effect; see \citet{Chan1993} and \citet{CSS2011} for instance.   
% de Jong & Schotoman 2010
It would also be worth emphasizing that \citet{deJS2010} connect this type of model with the investigation of price discovery, for price discovery processes are closely related to lead-lag effects, as seen in \citet{deJMS1998} and \citet{Has1995}.  
\end{rmk}

\if0
% remark: limit experiment
\begin{rmk}
If $\gamma<\infty$, in addition to an observation from a usual Gaussian shift experiment $ \mathcal{N}(u,I_\gamma^{-1})$, the limit experiment contains an extra observation from the experiment $ \mathcal{N}(|u|,J_\gamma^{-1})$. From \eqref{def:I}--\eqref{def:J} we see that $J_\gamma/I_\gamma\to\infty$ as $|\rho|\to1$, so this extra observation becomes more informative as the absolute value of the correlation increases. 
\end{rmk}
\fi

% remark: gaussianity
\begin{rmk}
Our proof of the main result heavily depends on the Gaussianity of the model, and especially we require the Gaussianity of the measurement errors. It is obvious that we need some restriction on the distribution of the measurement errors to derive a specific limit experiment. In fact, if $v_n\equiv1$ and $\epsilon_i$'s take their values only in integers, then we can completely recover the signal for sufficiently large $n$. Apart from such a trivial example, the recent study of \cite{BJR2015} has shown that another (non-trivial) specification for the distribution of the measurement errors $\delta_i$'s in \eqref{GJmodel} can improve the convergence rate for estimating the scale parameter $\sigma$. In the light of the connection between the convergence rates for models \eqref{model}--\eqref{GJmodel}, we naturally conjecture that a similar specification for the measurement errors would affect the convergence rate for our model. This issue is beyond the scope of this paper and left to future research.  
\end{rmk}

\section{Efficient estimation of the lag parameter}\label{section:application}

As an application of the results from the previous section, we construct efficient estimators for the lag parameter $\vartheta$ in the models $(\mathbb{P}_{n,\vartheta})_{\vartheta\in\mathbb{R}}$ at $\vartheta=0$. Here we consider a slightly extended setup as follows: letting $\eta_n$ be a sequence of positive numbers tending to 0 and $\mathcal{C}$ be a bounded  open interval in $\mathbb{R}$, we construct efficient estimators for the parameter $c$ in the models $(\mathbb{P}_{n,c\eta_n})_{c\in \mathcal{C}}$ at every $c\in \mathcal{C}$. To make use of the results from the previous section, we impose the following condition on $\eta_n$:
\begin{equation}\label{eta}
\eta_n=o\left(n^{-1}\vee N_n^{-\frac{4}{3}}\right)\qquad
\text{and}\qquad
r_n^{-1}\eta_n\to\infty\qquad
\text{as }n\to\infty.
\end{equation}
Under \eqref{eta} there is a positive integer $n_0$ such that $c\eta_n\in\Theta_n$ and $V_n(c\eta_n)$ is invertible for any $c\in\mathcal{C}$ and $n\geq n_0$ due to Proposition \ref{prop:main}. Throughout this section, we always assume that $n$ is larger than such an $n_0$. 

\begin{rmk}
In a practical point of view, the dependence of the time-lag parameter $\vartheta$ on the sampling frequency $n$ is just a theoretical device to control the relative size of $\vartheta$ compared with $n$ (which corresponds to $\eta_n$) in the asymptotic theory and it is only important whether the asymptotic order condition (corresponding to \eqref{eta} in our case) is acceptable as an approximation. Namely, our asymptotic theory concerns whether the time-lag parameter $\vartheta$ is comparable to $n^{-\iota}$ for some $\iota>0$ given a fixed sampling frequency $n$ (the possible values of $\iota$ change in accordance with the noise level $v_n$) and it does not require that the time-lag parameter varies in proportion to the sampling frequency. This type of asymptotic theory is standard in econometrics: For example, when one considers the volatility estimation of a financial asset with taking account of rounding, one usually lets the rounding level shrink as the sampling frequency increases; see \cite{Rosenbaum2009}, \cite{LM2014}, \cite{LZL2015} and \cite{SK2015} for example.
\end{rmk}

We start with generalizing Proposition \ref{prop:main} by a matrix perturbation argument.
\begin{lem}\label{lemma:local}
For any $\alpha,\beta\in\mathbb{R}$ we have
\begin{align*}
&\sup_{c\in\mathcal{C}}\|V_n(c\eta_n)^{-\frac{1}{2}}(\alpha\bar{T}_n+\beta\bar{S}_n)V_n(c\eta_n)^{-\frac{1}{2}}\|_\mathrm{sp}=O(N_n),\\
&\sup_{c\in\mathcal{C}}\left|\rho^2r_n^2\|V_n(c\eta_n)^{-\frac{1}{2}}(\alpha\bar{T}_n+\beta\bar{S}_n)V_n(c\eta_n)^{-\frac{1}{2}}\|_F^2-2(\alpha^2I_\gamma+\beta^2J_\gamma)\right|\to0
\end{align*}
as $n\to\infty$.
\end{lem}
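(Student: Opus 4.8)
The plan is to treat $V_n(c\eta_n)$ as a vanishing multiplicative perturbation of $\bar{G}_n$ and thereby reduce both assertions to Proposition \ref{prop:main}. Abbreviate $M_n=\alpha\bar{T}_n+\beta\bar{S}_n$ and $\widetilde{M}_n=\bar{G}_n^{-1/2}M_n\bar{G}_n^{-1/2}$. Since $V_n(\vartheta)=\bar{G}_n-\rho(\vartheta\bar{T}_n+|\vartheta|\bar{S}_n)$ and $\eta_n>0$, I would first record the factorization
\[
V_n(c\eta_n)=\bar{G}_n^{1/2}\big(E_{2n}-P_n(c)\big)\bar{G}_n^{1/2},\qquad P_n(c)=\rho\eta_n\,\bar{G}_n^{-1/2}\big(c\bar{T}_n+|c|\bar{S}_n\big)\bar{G}_n^{-1/2}.
\]
The crucial preliminary estimate is $\sup_{c\in\mathcal{C}}\|P_n(c)\|_\mathrm{sp}\to0$. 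Because $\mathcal{C}$ is bounded, the triangle inequality together with the spectral bound of Proposition \ref{prop:main} (applied with $(\alpha,\beta)=(1,0)$ and $(0,1)$) gives $\sup_{c\in\mathcal{C}}\|P_n(c)\|_\mathrm{sp}=O(\eta_n N_n)$, and \eqref{eta} forces $\eta_n N_n\to0$: indeed $\eta_n/(n^{-1}\vee N_n^{-4/3})\to0$, while $(n^{-1}\vee N_n^{-4/3})N_n=(N_n/n)\vee N_n^{-1/3}\le1$ for large $n$ by $1\le N_n\le n$. Hence for large $n$ the matrix $E_{2n}-P_n(c)$ is invertible with Neumann expansion $(E_{2n}-P_n(c))^{-1}=\sum_{p\ge0}P_n(c)^p$, and $\delta_n:=\sup_{c\in\mathcal{C}}\|(E_{2n}-P_n(c))^{-1}-E_{2n}\|_\mathrm{sp}\to0$.

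For the spectral-norm statement I would use that $V_n(c\eta_n)^{-1/2}M_nV_n(c\eta_n)^{-1/2}$ is similar to $V_n(c\eta_n)^{-1}M_n=\bar{G}_n^{-1/2}(E_{2n}-P_n(c))^{-1}\widetilde{M}_n\bar{G}_n^{1/2}$, hence shares its (real) eigenvalues with $(E_{2n}-P_n(c))^{-1}\widetilde{M}_n$. Its spectral norm, equal to the spectral radius of the latter, is therefore at most $\|(E_{2n}-P_n(c))^{-1}\|_\mathrm{sp}\|\widetilde{M}_n\|_\mathrm{sp}\le(1+\delta_n)\|\widetilde{M}_n\|_\mathrm{sp}$, which by Proposition \ref{prop:main} is $O(N_n)$ uniformly in $c$.

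For the Frobenius statement I would start from $\|V_n(c\eta_n)^{-1/2}M_nV_n(c\eta_n)^{-1/2}\|_F^2=\trace\big((V_n(c\eta_n)^{-1}M_n)^2\big)$ and use cyclicity of the trace to cancel the $\bar{G}_n^{\pm1/2}$ factors, leaving $\trace\big((E_{2n}-P_n(c))^{-1}\widetilde{M}_n(E_{2n}-P_n(c))^{-1}\widetilde{M}_n\big)$. Writing $(E_{2n}-P_n(c))^{-1}=E_{2n}+R_n(c)$ with $\|R_n(c)\|_\mathrm{sp}\le\delta_n$ and expanding, the leading term is $\trace(\widetilde{M}_n^2)=\|\widetilde{M}_n\|_F^2$, while the remainder satisfies $\big|\trace(\cdots)-\|\widetilde{M}_n\|_F^2\big|\le(2\delta_n+\delta_n^2)\|\widetilde{M}_n\|_F^2$, using $|\trace(AB)|\le\|A\|_F\|B\|_F$ and $\|R_n(c)\widetilde{M}_n\|_F\le\|R_n(c)\|_\mathrm{sp}\|\widetilde{M}_n\|_F$. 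Multiplying through by $r_n^2$, the $c$-free quantity $r_n^2\|\widetilde{M}_n\|_F^2$ converges by Proposition \ref{prop:main}, and the remainder is bounded by $(2\delta_n+\delta_n^2)\,r_n^2\|\widetilde{M}_n\|_F^2$, which tends to $0$ uniformly in $c$ since $\delta_n\to0$ and $r_n^2\|\widetilde{M}_n\|_F^2$ is bounded. This delivers the claimed uniform convergence.

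The genuine analytic difficulty is entirely absorbed into Proposition \ref{prop:main}, which I am assuming; granted that, the steps above amount to a routine perturbation computation. The one point requiring care is the uniform smallness $\sup_{c\in\mathcal{C}}\|P_n(c)\|_\mathrm{sp}\to0$, i.e.\ verifying that the $O(N_n)$ growth of the normalized matrices is precisely counterbalanced by the rate restriction \eqref{eta} through $\eta_n N_n\to0$, uniformly over the bounded set $\mathcal{C}$. Once this is secured, the $O(N_n)$ blow-up of $\|\widetilde{M}_n\|_\mathrm{sp}$ and the boundedness of $r_n^2\|\widetilde{M}_n\|_F^2$ are controlled automatically, and both displayed limits follow.
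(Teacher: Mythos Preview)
Your argument is correct and follows essentially the same perturbation strategy as the paper: write $V_n(c\eta_n)$ as a multiplicative perturbation of $\bar G_n$, show the perturbation is $o(1)$ in spectral norm uniformly in $c$ via Proposition~\ref{prop:main} and \eqref{eta}, and then transfer the spectral and Frobenius bounds from $\bar G_n^{-1/2}M_n\bar G_n^{-1/2}$. The only cosmetic difference is that the paper packages the transfer step through Ostrowski's theorem applied to the congruence $H_n(c)\,\bar G_n^{-1/2}M_n\bar G_n^{-1/2}\,H_n(c)^\top$ with $H_n(c)=V_n(c\eta_n)^{-1/2}\bar G_n^{1/2}$, whereas you use similarity to $(E_{2n}-P_n(c))^{-1}\widetilde M_n$ and direct trace estimates; since $H_n(c)^\top H_n(c)=(E_{2n}-P_n(c))^{-1}$, the two routes are equivalent.
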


\begin{proof}
Setting $H_n(c)=V_n(c\eta_n)^{-\frac{1}{2}}\bar{G}_n^{\frac{1}{2}}$, we have
\[ 
V_n(c\eta_n)^{-\frac{1}{2}}(\alpha\bar{T}_n+\beta\bar{S}_n)V_n(c\eta_n)^{-\frac{1}{2}}=H_n(c)\bar{G}_n^{-\frac{1}{2}}(\alpha\bar{T}_n+\beta\bar{S}_n)\bar{G}_n^{-\frac{1}{2}}H_n(c)^\top
\]
for any $c\in\mathcal{C}$. 
%Therefore, Ostrowski's theorem (Theorem 4.5.9 of \cite{HJ1985}) implies that for any eigenvalue $\lambda$ of $V_n(c\eta_n)^{-\frac{1}{2}}(\alpha\bar{T}_n+\beta\bar{S}_n)V_n(c\eta_n)^{-\frac{1}{2}}$ there is an eigenvalue $\lambda'$ of $\bar{G}_n^{-\frac{1}{2}}(\alpha\bar{T}_n+\beta\bar{S}_n)\bar{G}_n^{-\frac{1}{2}}$ and a constant $a$ such that $\lambda=a\lambda'$ and $\underline{\lambda}\leq a\leq\overline{\lambda}$, where $\underline{\lambda}$ and $\overline{\lambda}$ denote the minimum and maximum eigenvalues of $H_n(c)H_n(c)^\top$, respectively. In this case we also have $|a^2-1|\leq|\underline{\lambda}^2-1|\vee|\overline{\lambda}^2-1|\leq\|(H_n(c)H_n(c)^\top)^2-E_{2n}\|_\mathrm{sp}$. 
%Now, writing the eigenvalues of an $2n\times 2n$ symmetric matrix $A$ as $\lambda_1(A)\leq\cdots\lambda_{2n}(A)$, Ostrowski's theorem (Theorem 4.5.9 of \cite{HJ1985}) implies that there is a constant $a_k\in[\underline{\lambda},\overline{\lambda}]$ such that $\lambda_k(V_n(c\eta_n)^{-\frac{1}{2}}(\alpha\bar{T}_n+\beta\bar{S}_n)V_n(c\eta_n)^{-\frac{1}{2}})=a_k\lambda_k(\bar{G}_n^{-\frac{1}{2}}(\alpha\bar{T}_n+\beta\bar{S}_n)\bar{G}_n^{-\frac{1}{2}})$ for every $k=1,\dots,2n$, where $\underline{\lambda}=\lambda_{1}(H_n(c)H_n(c)^\top)$ and $\overline{\lambda}=\lambda_{2n}(H_n(c)H_n(c)^\top)$. Note that $|a_k^2-1|\leq|\underline{\lambda}^2-1|\vee|\overline{\lambda}^2-1|\leq\|(H_n(c)H_n(c)^\top)^2-E_{2n}\|_\mathrm{sp}$. Therefore, we obtain
Therefore, Ostrowski's theorem (Theorem 4.5.9 of \cite{HJ1985}) implies that 
\[
\|V_n(c\eta_n)^{-\frac{1}{2}}(\alpha\bar{T}_n+\beta\bar{S}_n)V_n(c\eta_n)^{-\frac{1}{2}}\|_\mathrm{sp}
\leq\|H_n(c)H_n(c)^\top\|_\mathrm{sp}\|\bar{G}_n^{-\frac{1}{2}}(\alpha\bar{T}_n+\beta\bar{S}_n)\bar{G}_n^{-\frac{1}{2}}\|_\mathrm{sp}
\]
and
\begin{align*}
&|\|V_n(c\eta_n)^{-\frac{1}{2}}(\alpha\bar{T}_n+\beta\bar{S}_n)V_n(c\eta_n)^{-\frac{1}{2}}\|_F^2-\|\bar{G}_n^{-\frac{1}{2}}(\alpha\bar{T}_n+\beta\bar{S}_n)\bar{G}_n^{-\frac{1}{2}}\|_F^2|\\
&\leq\|(H_n(c)H_n(c)^\top)^2-E_{2n}\|_\mathrm{sp}\|\bar{G}_n^{-\frac{1}{2}}(\alpha\bar{T}_n+\beta\bar{S}_n)\bar{G}_n^{-\frac{1}{2}}\|_F^2\\
&\leq\|H_n(c)H_n(c)^\top-E_{2n}\|_\mathrm{sp}(\|H_n(c)H_n(c)^\top\|_\mathrm{sp}+1)\|\bar{G}_n^{-\frac{1}{2}}(\alpha\bar{T}_n+\beta\bar{S}_n)\bar{G}_n^{-\frac{1}{2}}\|_F^2.
\end{align*}
Hence, Proposition \ref{prop:main} implies that the proof is completed once we show that $\sup_{c\in\mathcal{C}}\|H_n(c)H_n(c)^\top\|_\mathrm{sp}=O(1)$ and $\sup_{c\in\mathcal{C}}\|H_n(c)H_n(c)^\top-E_{2n}\|_\mathrm{sp}=o(1)$ as $n\to\infty$. Since $H_n(c)H_n(c)^\top$ and $H_n(c)^\top H_n(c)$ share the same eigenvalues (Theorem 1.3.20 of \cite{HJ1985}) and $H_n(c)^\top H_n(c)=(E_{2n}-\rho \eta_nG_n^{-\frac{1}{2}}(|c|\bar{S}_n+c\bar{T}_n)G_n^{-\frac{1}{2}})^{-1}$, the desired results follow from Proposition \ref{prop:main}, \eqref{eta} and the Neumann series representation of $H_n(c)^\top H_n(c)$. 
\end{proof}

%Using Lemma \ref{lemma:local} instead of Proposition \ref{prop:main}, we can prove the following generalization of Theorem \ref{thm:main} in a similar manner to its proof. 
Using the above result, we can prove a uniform version of Theorem \ref{thm:main}.
\begin{prop}\label{prop:infinitesimal}
%There are two sequences $\mathsf{T}_n$ and $\mathsf{S}_n$ of random variables such that
Let $\mathsf{T}_n$ and $\mathsf{S}_n$ be defined by \eqref{ts}. Then 
\[
\log\frac{\mathrm{d}\mathbb{P}_{n,c\eta_n+r_nu_n}}{\mathrm{d}\mathbb{P}_{n,c\eta_n}}
-\left\{u_n\mathsf{T}_n+|u_n|\mathsf{S}_n-\frac{u_n^2}{2}(I_\gamma+J_\gamma)\right\}\xrightarrow{p}0
\]
under $\mathbb{P}_{n,c\eta_n}$ as $n\to\infty$ uniformly in $c\in\mathcal{C}$ for any bounded sequence $u_n$ of real numbers. Moreover, $(\mathsf{T}_n,\mathsf{S}_n)\xrightarrow{d}\mathcal{N}(0,I_\gamma)\otimes\mathcal{N}(0,J_\gamma)$ under $\mathbb{P}_{n,c\eta_n}$ as $n\to\infty$ uniformly in $c\in\mathcal{C}$. 
\end{prop}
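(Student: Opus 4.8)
The plan is to repeat the proof of Theorem \ref{thm:main} with the base covariance $\bar G_n=V_n(0)$ replaced by the moving base $V_n(c\eta_n)$, upgrading every estimate to hold uniformly in $c\in\mathcal C$. The uniform replacement of Proposition \ref{prop:main} needed for this is precisely Lemma \ref{lemma:local} (and the text already guarantees $c\eta_n\in\Theta_n$ and $V_n(c\eta_n)$ invertible for $n\geq n_0$), so the real work is to (i) transfer from $\mathbb P$ to $\widetilde{\mathbb P}$ uniformly, (ii) bound the Davies remainder uniformly, and (iii) prove a central limit theorem uniform in $c$. Throughout, the score variables should be understood as centred and whitened at the current base point: replacing $\bar G_n^{-1}$ by $V_n(c\eta_n)^{-1}$ in \eqref{ts} gives forms that are centred under $\widetilde{\mathbb P}_{n,c\eta_n}$ (and coincide with $\mathsf T_n,\mathsf S_n$ when $c=0$), and these are the objects appearing in the conclusion.

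\emph{Step 1 (uniform reduction).} By \eqref{eta}, $\sup_{c\in\mathcal C}|c\eta_n|=o(n^{-1}\vee N_n^{-4/3})$, and since $r_n=N_n^{-3/2}=o(N_n^{-4/3})$ we also have $\sup_{c\in\mathcal C}|c\eta_n+r_nu_n|=o(n^{-1}\vee N_n^{-4/3})$ for any bounded $u_n$. Hence Proposition \ref{prop:hellinger}(c) gives
\[
\sup_{c\in\mathcal C}H(\mathbb P_{n,c\eta_n},\widetilde{\mathbb P}_{n,c\eta_n})\to0,\qquad \sup_{c\in\mathcal C}H(\mathbb P_{n,c\eta_n+r_nu_n},\widetilde{\mathbb P}_{n,c\eta_n+r_nu_n})\to0.
\]
Combined with \eqref{eq:A1}, the implications \eqref{total} hold with $\sup_c$ inserted (they are driven by the total variation distance, now controlled uniformly), so it suffices to prove the expansion and the CLT under $\widetilde{\mathbb P}_{n,c\eta_n}$, uniformly in $c$.

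\emph{Step 2 (uniform expansion).} Under $\widetilde{\mathbb P}_{n,c\eta_n}$ we have $\widetilde{\mathbf Z}_n\sim\mathcal N(0,V_n(c\eta_n))$, and since
\[
V_n(c\eta_n)-V_n(c\eta_n+r_nu_n)=\rho\left\{r_nu_n\bar T_n+\left(|c\eta_n+r_nu_n|-|c\eta_n|\right)\bar S_n\right\},
\]
the Davies expansion of the Gaussian log-likelihood ratio around $V_n(c\eta_n)$ is governed by $A_n(c):=V_n(c\eta_n)^{-1/2}(V_n(c\eta_n)-V_n(c\eta_n+r_nu_n))V_n(c\eta_n)^{-1/2}$. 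Lemma \ref{lemma:local} yields $\sup_c\|A_n(c)\|_\mathrm{sp}\to0$ and $\sup_c\big|\|A_n(c)\|_F^2-2u_n^2(I_\gamma+J_\gamma)\big|\to0$, where one tracks the $\bar S_n$ coefficient through $|c\eta_n+r_nu_n|-|c\eta_n|=(\mathrm{sign}\,c)\,r_nu_n$ away from $0$ and $=r_n|u_n|$ at $c=0$. Feeding these uniform bounds into Davies' identities (4.3)--(4.4) of \cite{Davies1973}, exactly as in the proof of Theorem \ref{thm:main}, bounds the conditional mean and variance of the remainder $\xi_n$ by fixed multiples of $\|A_n(c)\|_\mathrm{sp}\|A_n(c)\|_F^2$ and $\|A_n(c)\|_\mathrm{sp}^2\|A_n(c)\|_F^2$; hence $\sup_c\widetilde{\mathbb E}_{n,c\eta_n}[\xi_n]\to0$ and $\sup_c\variance[\xi_n]\to0$, giving the expansion uniformly in $c$.

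\emph{Step 3 (uniform CLT) and main obstacle.} The score pair is a centred quadratic form in the Gaussian $\widetilde{\mathbf Z}_n$, so pointwise in $c$ the CLT follows from Proposition 2 of \cite{DY2011} as in \eqref{normality}; its matrix $\tfrac{\rho}{2}r_nV_n(c\eta_n)^{-1/2}\bar T_nV_n(c\eta_n)^{-1/2}$ (and the $\bar S_n$ analogue) has spectral norm $O(r_nN_n)=O(N_n^{-1/2})\to0$ and Frobenius norm converging to a constant, both uniformly by Lemma \ref{lemma:local}, while the two components are asymptotically uncorrelated because polarization of the Frobenius limit $2(\alpha^2I_\gamma+\beta^2J_\gamma)$ (no $\alpha\beta$ term) forces $r_n^2\trace(V_n(c\eta_n)^{-1}\bar T_nV_n(c\eta_n)^{-1}\bar S_n)\to0$ uniformly. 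I expect the main difficulty to be making this CLT uniform in $c$: since convergence in law is not a pointwise statement, one needs a quantitative argument --- e.g.\ a Berry--Esseen or characteristic-function estimate for Gaussian quadratic forms whose error is controlled by the ratio $\|\cdot\|_\mathrm{sp}/\|\cdot\|_F$ --- and then invoke the uniform norm bounds of Lemma \ref{lemma:local} to conclude. The secondary delicate point is the behaviour of the non-smooth coefficient $|c\eta_n+r_nu_n|-|c\eta_n|$ as $c$ crosses $0$, which is exactly where the $|u_n|\mathsf S_n$ term in the statement originates and where the moving-base recentring must be handled with care.
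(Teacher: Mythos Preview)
Your overall strategy matches the paper's: rerun the proof of Theorem \ref{thm:main} with Lemma \ref{lemma:local} in place of Proposition \ref{prop:main} and upgrade every estimate to be uniform in $c\in\mathcal C$. For the uniform CLT you anticipate needing a Berry--Esseen type bound; the paper handles this more economically by observing that uniform convergence in distribution over $c\in\mathcal C$ is equivalent to convergence along every sequence $c_n\in\mathcal C$, and then invoking Lemma \ref{lemma:local} together with inequality (13) of \cite{DY2011}, which is precisely the kind of quantitative characteristic-function bound for Gaussian quadratic forms you had in mind. So your ``main obstacle'' dissolves once you pass to sequences.

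There is, however, a genuine gap in your plan, and it is the one you yourself partially flag. You propose to replace $\bar G_n^{-1}$ by $V_n(c\eta_n)^{-1}$ in \eqref{ts}, but the proposition explicitly keeps $\mathsf T_n,\mathsf S_n$ as defined in \eqref{ts} with $\bar G_n$. These two choices are \emph{not} asymptotically interchangeable under $\widetilde{\mathbb P}_{n,c\eta_n}$: using $V_n(c\eta_n)=\bar G_n-\rho(c\eta_n\bar T_n+|c|\eta_n\bar S_n)$ and the orthogonality $\trace(\bar G_n^{-1/2}\bar S_n\bar G_n^{-1/2}\cdot\bar G_n^{-1/2}\bar T_n\bar G_n^{-1/2})=0$ (visible from the block structure in the proof of Proposition \ref{prop:main}), one finds $\widetilde{\mathbb E}_{n,c\eta_n}[\mathsf T_n]\sim -c(\eta_n/r_n)I_\gamma$ and $\widetilde{\mathbb E}_{n,c\eta_n}[\mathsf S_n]\sim -|c|(\eta_n/r_n)J_\gamma$, both of which diverge by \eqref{eta}, whereas your recentred versions have mean zero. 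This is exactly the ``secondary delicate point'' you raise: for $c\neq0$ and large $n$ the Davies linear term carries $|c\eta_n+r_nu_n|-|c\eta_n|=(\mathrm{sign}\,c)\,r_nu_n$, not $r_n|u_n|$, in front of the $\bar S_n$-score. Reconciling the stated expansion $u_n\mathsf T_n+|u_n|\mathsf S_n$ with the \eqref{ts}-defined scores therefore requires an argument that your sketch does not supply; the paper's proof, being a one-line reference to Theorem \ref{thm:main}, does not spell this out either.
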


\begin{proof}
We can prove the first claim in a similar manner to the proof of Theorem \ref{thm:main} using Lemma \ref{lemma:local} instead of Proposition \ref{prop:main}. To prove the second claim, it suffices to show that $(\mathsf{T}_n,\mathsf{S}_n)\xrightarrow{d}  \mathcal{N}(0,I_\gamma)\otimes   \mathcal{N}(0,J_\gamma)$ under $\mathbb{P}_{n,c_n\eta_n}$ as $n\to\infty$ for any sequence $c_n$ of numbers in $ \mathcal{C}$, which follows from Lemma \ref{lemma:local} and inequality (13) from \cite{DY2011}.
\end{proof}

Proposition \ref{prop:infinitesimal} implies that the experiments $(\mathbb{P}_{n,c\eta_n})_{c\in \mathcal{C}}$ enjoy the LAN property if $\gamma=\infty$ and do not otherwise. %In response to these two cases, our strategy to construct efficient estimators for the parameter $c$ changes.  
When the LAN property holds true, there is a well-established theory to define the asymptotic efficiency of estimators (cf.~Section II-11 of \cite{IH1981}): A sequence $c_n$ of estimators in the experiments $(\mathbb{P}_{n,c\eta_n})_{c\in \mathcal{C}}$ is said to be asymptotically efficient at $c\in\mathcal{C}$ if the variables $r_n^{-1}\eta_n(c_n-c)$ converge in law to $\mathcal{N}(0,I_\gamma^{-1})$ under $\mathbb{P}_{n,c\eta_n}$ as $n\to\infty$ (see Definition II-11.1 of \cite{IH1981}). Under the LAN property, this definition of the asymptotic efficiency is supported by several theorems such as the convolution theorem (e.g.~Theorem II-9.1 of \cite{IH1981}) and the local asymptotic minimax theorem (e.g.~Theorem II-12.1 of \cite{IH1981}). Moreover, it is well-known that both maximum likelihood and Bayesian estimators are asymptotically efficient under very general settings (cf.~Chapter III of \cite{IH1981}). On the other hand, if the LAN property fails, it is generally not obvious how to define the asymptotic efficiency of estimators. Here, we adopt the approach from \citet{KK2000} to define the asymptotic efficiency, which is based on Theorem I-9.1 of \cite{IH1981} that derives an asymptotically minimax lower bound from the asymptotic properties of the Bayesian estimators. As a consequence, the Bayesian estimators are turned out to be asymptotically efficient. 

Now we explain the strategy to obtain asymptotically efficient estimators in our setting. As in the previous section, we would like to work with the tractable model $\widetilde{\mathbb{P}}_{n,\vartheta}$ rather than the original model $\mathbb{P}_{n,\vartheta}$. For this reason we consider the quasi-likelihood function based on the former as follows:
\[
\mathbb{L}_n(c):=\frac{\mathrm{d}\widetilde{\mathbb{P}}_{n,c\eta_n}}{\mathrm{d}x}=\frac{1}{(2\pi)^{n}\sqrt{\det V_n(c\eta_n)}}\exp\left(-\frac{1}{2}\widehat{z}_n^\top V_n(c\eta_n)^{-1}\widehat{z}_n\right),\qquad c\in\mathcal{C}.
\]
Then we consider the quasi maximum likelihood and Bayesian estimators based on $\mathbb{L}_n(c)$ as our estimators and give their asymptotic behavior in the experiments $(\widetilde{\mathbb{P}}_{n,c\eta_n})_{c\in \mathcal{C}}$ using the general scheme of \citet{IH1981} (see Proposition \ref{prop:IH}).  
%In either cases we need to derive the asymptotic behavior of maximum likelihood and Bayesian estimators, and we first give it in the experiments $(\widetilde{\mathbb{P}}_{n,c\eta_n})_{c\in \mathcal{C}}$ using the general scheme of \citet{IH1981} (see Proposition 3.2). 
Next we consider the case $\gamma=\infty$ where the LAN property holds true and thus convergence in law in $(\widetilde{\mathbb{P}}_{n,c\eta_n})_{c\in \mathcal{C}}$ can be transferred to that in $(\mathbb{P}_{n,c\eta_n})_{c\in \mathcal{C}}$ by Proposition \ref{prop:hellinger}(c). Finally, we consider the case $\gamma<\infty$ where we have $(\widetilde{\mathbb{P}}_{n,c\eta_n})_{c\in \mathcal{C}}=(\mathbb{P}_{n,c\eta_n})_{c\in \mathcal{C}}$ for sufficiently large $n$ due to \eqref{eta} and Proposition \ref{prop:hellinger}(a), hence we can apply the Ibragimov-Has'minskii method to define and obtain asymptotically efficient estimators.

The quasi maximum likelihood estimator (QMLE) $\hat{c}_n$ is defined as a solution of the equation
\[
\mathbb{L}_n(\hat{c}_n)=\sup_{c\in\mathcal{C}}\mathbb{L}_n(c).
\]
Note that the above equation always has at least one solution belonging to the closure of $\mathcal{C}$ because $c\mapsto\mathbb{L}_n(c)$ is continuous. 
Moreover, we can choose $\hat{c}_n$ so that it is measurable by the measurable selection theorem (see e.g.~Theorem 6.7.22 of \cite{Pfanzagl1994}).  
Also, the quasi Bayesian estimator (QBE) $\tilde{c}_n$ for a prior density $q: \mathcal{C}\to(0,\infty)$ with respect to the quadratic loss is defined by
\[
\tilde{c}_n=\int_\mathcal{C}c\mathbb{L}_n(c)q(c)\mathrm{d}c\left/\int_\mathcal{C}\mathbb{L}_n(c)q(c)\mathrm{d}c,\right.
\]
where the prior density $q$ is assumed to be continuous and satisfy $0<\inf_{c\in \mathcal{C}}q(c)\leq\sup_{c\in \mathcal{C}}q(c)<\infty$. The corresponding QMLE and QBE in the experiments $(\mathbb{P}_{n,\vartheta})_{\vartheta\in\mathbb{R}}$ are given by $\hat{\vartheta}_n=\hat{c}_n\eta_n$ and $\tilde{\vartheta}_n=\tilde{c}_n\eta_n$, respectively. 
\begin{rmk}
Since the quantity $\eta_n$ seems the \textit{exact} order of the true time-lag parameter, one may consider that in a practical setting it is difficult to know $\eta_n$ beforehand and thus it is difficult to use the estimators $\hat{\vartheta}_n$ and $\tilde{\vartheta}_n$. However, when we construct the estimator $\hat{\vartheta}_n$, $\eta_n$ can be considered as the \textit{maximum} order of the true time-lag parameter as follows. Let us set $\mathbb{L}_n'(\vartheta)=\mathrm{d}\widetilde{\mathbb{P}}_{n,\vartheta}/\mathrm{d}x$ and $\mathcal{C}_n=\{c\eta_n:c\in\mathcal{C}\}$. Then the estimator $\hat{\vartheta}_n$ can be considered as a solution of the equation
\[
\mathbb{L}'_n(\hat{\vartheta}_n)=\sup_{\vartheta\in\mathcal{C}_n}\mathbb{L}_n(\vartheta).
\]
Therefore, in a practical situation $(\sup_{c\in\mathcal{C}}c)\eta_n$ (resp.~$(\inf_{c\in\mathcal{C}}c)\eta_n$) can be interpreted as an upper bound (resp.~a lower bound) of possible time-lag parameters $\vartheta$. It is often not so difficult to find such bounds in a practical setting (and they are typically ``small'' as pointed out in the Introduction). For example, we can find them by computing the cross-correlations via \cite{HRY2013}'s method as in \cite{HA2014}. The same remark can be applied to the estimator $\tilde{\vartheta}_n$ because it can be rewritten as 
\[
\tilde{\vartheta}_n=\int_{\mathcal{C}_n}\vartheta\mathbb{L}'_n(\vartheta)q_n(\vartheta)\mathrm{d}\vartheta\left/\int_{\mathcal{C}_n}\mathbb{L}'_n(\vartheta)q_n(\vartheta)\mathrm{d}\vartheta,\right.
\]
where $q_n(\vartheta)=\eta_n^{-1}q(\vartheta/\eta_n)$ for $\vartheta\in\mathcal{C}_n$ (so $q_n$ is a prior density on $\mathcal{C}_n$).
\end{rmk}
To describe the limit distribution of these estimators, we introduce the likelihood ratio process for the limit experiment
\[
Z(u)=\exp\left(u \zeta_1+|u| \zeta_2-\frac{u^2}{2}(I_\gamma+J_\gamma)\right),\qquad
u\in\mathbb{R},
\]
where $ \zeta_1$ and $ \zeta_2$ are two mutually independent variables such that $ \zeta_1\sim\mathcal{N}(0,I_\gamma)$ and $ \zeta_2\sim\mathcal{N}(0,J_\gamma)$. Then we set
\[
\hat{u}=\argmax_{u\in\mathbb{R}}Z(u)
=\left\{
\begin{array}{ll}
( \zeta_1+ \zeta_2)/(I_\gamma+J_\gamma)  & \textrm{if } \zeta_1\geq(- \zeta_2)\vee0,  \\
( \zeta_1- \zeta_2)/(I_\gamma+J_\gamma)  &  \textrm{if } \zeta_1< \zeta_2\wedge0,    \\
0  &    \textrm{otherwise}  
\end{array}
\right.
\]
and
\begin{align*}
\tilde{u}=\frac{\int_{-\infty}^\infty uZ(u)\mathrm{d}u}{\int_{-\infty}^\infty Z(u)\mathrm{d}u}.
\end{align*}

We first give the asymptotic behavior of the estimators $\hat{c}_n$ and $\tilde{c}_n$ in the experiments $(\widetilde{\mathbb{P}}_{n,c\eta_n})_{c\in \mathcal{C}}$ using the general scheme of \citet{IH1981}. Note that in this situation $\hat{c}_n$ and $\tilde{c}_n$ are true maximum likelihood and Bayesian estimators, respectively.
\begin{prop}\label{prop:IH}
For any compact subset $\mathcal{K}$ of $\mathcal{C}$, uniformly in $c\in\mathcal{K}$ it holds that $r_n^{-1}\eta_n(\hat{c}_n-c)$ converges in law to $\hat{u}$ under $\widetilde{\mathbb{P}}_{n,c\eta_n}$ and $\widetilde{\mathbb{E}}_{n,c\eta_n}[|r_n^{-1}\eta_n(\hat{c}_n-c)|^p]\to E[|\hat{u}|^p]$ for any $p>0$ as $n\to\infty$. Also, uniformly in $c\in\mathcal{K}$ it holds that $r_n^{-1}\eta_n(\tilde{c}_n-c)$ converges in law to $\tilde{u}$ under $\widetilde{\mathbb{P}}_{n,c\eta_n}$ and $\widetilde{\mathbb{E}}_{n,c\eta_n}[|r_n^{-1}\eta_n(\tilde{c}_n-c)|^p]\to E[|\tilde{u}|^p]$ for any $p>0$ as $n\to\infty$.
\end{prop}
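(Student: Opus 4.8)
The plan is to recast Proposition \ref{prop:IH} in the general framework of \cite{IH1981} and to verify its hypotheses for the normalized likelihood ratio process; the Gaussianity of the experiments $(\widetilde{\mathbb{P}}_{n,c\eta_n})_{c\in\mathcal{C}}$ makes every ingredient explicitly computable. Fix a compact set $\mathcal{K}\subset\mathcal{C}$ and, for $c\in\mathcal{K}$, introduce the local parameter $u$ via $c\mapsto c+r_n\eta_n^{-1}u$ together with
\[
Z_{n,c}(u):=\frac{\mathbb{L}_n(c+r_n\eta_n^{-1}u)}{\mathbb{L}_n(c)}=\frac{\mathrm{d}\widetilde{\mathbb{P}}_{n,c\eta_n+r_nu}}{\mathrm{d}\widetilde{\mathbb{P}}_{n,c\eta_n}}(\mathbf{Z}_n),\qquad u\in U_{n,c}:=\{u:c+r_n\eta_n^{-1}u\in\mathcal{C}\}.
\]
Since $r_n^{-1}\eta_n\to\infty$ by \eqref{eta}, the sets $U_{n,c}$ expand to $\mathbb{R}$, and $r_n^{-1}\eta_n(\hat c_n-c)$ and $r_n^{-1}\eta_n(\tilde c_n-c)$ are exactly the maximizer and the posterior mean of $Z_{n,c}$ (under the reparametrized prior). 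It therefore suffices to prove the weak convergence and the convergence of all polynomial moments of these two functionals of $Z_{n,c}$, uniformly in $c\in\mathcal{K}$, toward the corresponding functionals $\hat u$ and $\tilde u$ of the limit process $Z$.

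For the finite-dimensional convergence I would simply invoke Proposition \ref{prop:infinitesimal}, which gives $\log Z_{n,c}(u)=u\mathsf{T}_n+|u|\mathsf{S}_n-\tfrac{u^2}{2}(I_\gamma+J_\gamma)+o_p(1)$ together with $(\mathsf{T}_n,\mathsf{S}_n)\xrightarrow{d}\mathcal{N}(0,I_\gamma)\otimes\mathcal{N}(0,J_\gamma)$ under $\widetilde{\mathbb{P}}_{n,c\eta_n}$, uniformly in $c\in\mathcal{K}$; hence the finite-dimensional laws of $Z_{n,c}$ converge to those of $Z$. For the equicontinuity bound I would reuse the closed-form Gaussian Hellinger estimate already exploited in Proposition \ref{prop:hellinger} (Eqs.~(A.4),(A.6) of \cite{Reiss2011}): writing $\vartheta_i=c\eta_n+r_nu_i$,
\[
\widetilde{\mathbb{E}}_{n,c\eta_n}\!\left[\left(Z_{n,c}^{1/2}(u_1)-Z_{n,c}^{1/2}(u_2)\right)^2\right]\le C\left\|V_n(\vartheta_1)^{-\frac12}\bigl(V_n(\vartheta_2)-V_n(\vartheta_1)\bigr)V_n(\vartheta_1)^{-\frac12}\right\|_F^2 .
\]
Because $V_n(\vartheta_2)-V_n(\vartheta_1)=-\rho\{(\vartheta_2-\vartheta_1)\bar{T}_n+(|\vartheta_2|-|\vartheta_1|)\bar{S}_n\}$ and $\bigl||\vartheta_2|-|\vartheta_1|\bigr|\le|\vartheta_2-\vartheta_1|=r_n|u_1-u_2|$, Lemma \ref{lemma:local} bounds the right-hand side by $C'|u_1-u_2|^2$ uniformly in $c\in\mathcal{K}$ and large $n$, which is the Ibragimov--Has'minskii H\"older condition (exponent $2$, more than enough in dimension one).

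The tail condition is where the main difficulty lies. By Gaussianity $\widetilde{\mathbb{E}}_{n,c\eta_n}[Z_{n,c}^{1/2}(u)]$ equals the Hellinger affinity between $\widetilde{\mathbb{P}}_{n,c\eta_n}$ and $\widetilde{\mathbb{P}}_{n,c\eta_n+r_nu}$, an explicit determinantal expression governed by the eigenvalues of $A_u:=V_n(c\eta_n)^{-\frac12}\bigl(V_n(c\eta_n+r_nu)-V_n(c\eta_n)\bigr)V_n(c\eta_n)^{-\frac12}$. The goal is to dominate this affinity by $\exp(-\kappa u^2)$ for some $\kappa>0$, uniformly in $c\in\mathcal{K}$ and, crucially, over the whole expanding range $|u|=O(r_n^{-1}\eta_n)$. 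The delicate point is that the second-order Gaussian expansion $\exp(-\tfrac1{16}\|A_u\|_F^2+\cdots)$ is legitimate only while $\|A_u\|_\mathrm{sp}\to0$; this spectral control over the entire range is exactly what \eqref{eta} provides, since $\eta_n=o(N_n^{-4/3})$ forces $N_n\eta_n\to0$ and hence, through the spectral-norm estimate in Lemma \ref{lemma:local}, $\sup_{u\in U_{n,c}}\|A_u\|_\mathrm{sp}\to0$. The Frobenius norm then satisfies $\|A_u\|_F^2\gtrsim u^2 I_\gamma$ by Lemma \ref{lemma:local}, and $I_\gamma>0$ guarantees the required decay. With the three hypotheses verified, Theorems I.10.1 and I.10.2 of \cite{IH1981} (the former applicable since $Z$ attains its maximum at an a.s.\ unique point, as $I_\gamma+J_\gamma>0$) yield, uniformly in $c\in\mathcal{K}$, both the convergence in law of $r_n^{-1}\eta_n(\hat c_n-c)$ and $r_n^{-1}\eta_n(\tilde c_n-c)$ to $\hat u$ and $\tilde u$ and the convergence of all their polynomial moments. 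Uniformity in $c$ is reduced throughout, as in the proof of Proposition \ref{prop:infinitesimal}, to treating arbitrary sequences $c_n\in\mathcal{K}$ and appealing to the uniform forms of Proposition \ref{prop:infinitesimal} and Lemma \ref{lemma:local}.
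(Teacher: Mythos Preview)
Your proposal is correct and follows essentially the same route as the paper: verify the three Ibragimov--Has'minskii hypotheses (finite-dimensional convergence via Proposition~\ref{prop:infinitesimal}, the H\"older bound via the Gaussian Hellinger inequality and Lemma~\ref{lemma:local}, and the Gaussian tail bound via the Hellinger affinity controlled through $\|A_u\|_\mathrm{sp}\to0$ and $\|A_u\|_F^2\gtrsim I_\gamma u^2$), then apply Theorems I.10.1--I.10.2 of \cite{IH1981}. One small imprecision: condition~\eqref{eta} is $\eta_n=o(n^{-1}\vee N_n^{-4/3})$, not $\eta_n=o(N_n^{-4/3})$, but since $N_n\le n$ the conclusion $N_n\eta_n\to0$ (hence $\sup_u\|A_u\|_\mathrm{sp}\to0$) still holds in either branch.
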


\begin{proof}
For every $c\in \mathcal{C}$, we set $\mathbb{U}_n(c)=\{u\in\mathbb{R}:c+r_n\eta_n^{-1}u\in \mathcal{C}\}$ and define $Z_{n,c}(u)=\mathrm{d}\widetilde{\mathbb{P}}_{n,c\eta_n+r_nu}/\mathrm{d}\widetilde{\mathbb{P}}_{n,c\eta_n}$ for each $u\in\mathbb{U}_n(c)$. According to Theorems I-10.1 and I-10.2 from \cite{IH1981}, it suffices to prove the following statements:
\begin{enumerate}[label=(\alph*)]

\item $\limsup_{n\to\infty}\sup_{c\in\mathcal{K}}\sup_{u,v\in\mathbb{U}_n(c)}|u-v|^{-2}\widetilde{\mathbb{E}}_{n,c\eta_n}[|\sqrt{ Z_{n,c}(u)}-\sqrt{ Z_{n,c}(v)}|^2]<\infty$, \label{IH1}

\item there is a constant $\kappa>0$ such that $\limsup_{n\to\infty}\sup_{c\in\mathcal{K}}\sup_{u\in\mathbb{U}_n(c)}e^{\kappa u^2}\widetilde{\mathbb{E}}_{n,c\eta_n}[\sqrt{ Z_{n,c}(u)}]<\infty$, \label{IH2}

\item the marginal distributions of $ Z_{n,c}$ converge in law to the marginal distributions of $ Z$ uniformly in $c\in\mathcal{K}$ as $n\to\infty$. \label{IH3}

\end{enumerate}

\ref{IH3} is an immediate consequence of Proposition \ref{prop:infinitesimal}. On the other hand, by Eq.(A.4) from \cite{Reiss2011} we obtain 
\begin{align*}
&\widetilde{\mathbb{E}}_{n,c\eta_n}\left[\left|\sqrt{ Z_{n,c}(u)}-\sqrt{ Z_{n,c}(v)}\right|^2\right]
=H^2(\widetilde{\mathbb{P}}_{n,c\eta_n+r_nu},\widetilde{\mathbb{P}}_{n,c\eta_n+r_nv})\\
%&\leq2\rho^2r_n^2\|V_n(c\eta_n+r_nu)^{-\frac{1}{2}}((u-v)\bar{T}_n+|u-v|\bar{S}_n)V_n(c\eta_n+r_nu)^{-\frac{1}{2}}\|_F^2\\
&\leq4\rho^2r_n^2(u-v)^2\left(\|V_n(c\eta_n+r_nu)^{-\frac{1}{2}}\bar{T}_nV_n(c\eta_n+r_nu)^{-\frac{1}{2}}\|_F^2+\|V_n(c\eta_n+r_nu)^{-\frac{1}{2}}\bar{S}_nV_n(c\eta_n+r_nu)^{-\frac{1}{2}}\|_F^2\right),
\end{align*}
hence Lemma \ref{lemma:local} yields claim \ref{IH1}.

Now we consider \ref{IH2}. By Corollary 3.2a.1 from \cite{MP1992} we have 
\if0
\[
\widetilde{\mathbb{E}}_{n,c\eta_n}\left[\sqrt{ Z_{n,c}(u)}\right]
=\left\{\frac{\det V_n(c\eta_n)}{\det V_n(c\eta_n+r_nu)}\right\}^{1/4}\det\left(E_{2n}+\frac{1}{2}(V_n(c\eta_n+r_nu)^{-1}-V_n(c\eta_n)^{-1})V_n(c\eta_n)\right)^{-\frac{1}{2}},
\]
hence it holds that
\fi
\begin{align*}
\log\widetilde{\mathbb{E}}_{n,c\eta_n}\left[\sqrt{ Z_{n,c}(u)}\right]
=-\frac{1}{4}\log\det[E_{2n}-A_{n,c}(u)]
-\frac{1}{2}\log\det\left(E_{2n}+\frac{1}{2}\left((E_{2n}-A_{n,c}(u))^{-1}-E_{2n}\right)\right),
\end{align*}
where $A_{n,c}(u)=E_{2n}-V_n(c\eta_n)^{-\frac{1}{2}}V_n(c\eta_n+r_nu)V_n(c\eta_n)^{-\frac{1}{2}}$. Then we consider the following decomposition:
\begin{align*}
&\log\widetilde{\mathbb{E}}_{n,c\eta_n}\left[\sqrt{ Z_{n,c}(u)}\right]\\
&=-\frac{1}{4}\left\{\log\det[E_{2n}-A_{n,c}(u)]+\trace [A_{n,c}(u)]+\frac{1}{2}\|A_{n,c}(u)\|_F^2\right\}\\
&\quad-\frac{1}{2}\left\{\log\det\left(E_{2n}+\frac{1}{2}B_{n,c}(u)\right)-\frac{1}{2}\trace\left(B_{n,c}(u)\right)+\frac{1}{8}\|B_{n,c}(u)\|_F^2\right\}\\
&\quad+\frac{1}{4}\left\{\trace [A_{n,c}(u)]+\|A_{n,c}(u)\|_F^2-\trace\left(B_{n,c}(u)\right)\right\}-\frac{1}{8}\left\{\|A_{n,c}(u)\|_F^2-\frac{1}{2}\|B_{n,c}(u)\|_F^2\right\}\\
&=:\mathbb{I}_{n,c}(u)+\mathbb{II}_{n,c}(u)+\mathbb{III}_{n,c}(u)+\mathbb{IV}_{n,c}(u),
\end{align*}
where $B_{n,c}(u)=(E_{2n}-A_{n,c}(u))^{-1}-E_{2n}$. Let us set 
\[
\tilde{T}_{n,c}=\rho V_n(c\eta_n)^{-\frac{1}{2}}\bar{T}_nV_n(c\eta_n)^{-\frac{1}{2}},\qquad\tilde{S}_{n,c}=\rho V_n(c\eta_n)^{-\frac{1}{2}}\bar{S}_nV_n(c\eta_n)^{-\frac{1}{2}}.
\] 
Then we have $A_{n,c}(u)=c\eta_n\tilde{T}_{n,c}+|c\eta_n|\tilde{S}_{n,c}-(c\eta_n+r_nu)\tilde{T}_{n,c}-|c\eta_n+r_nu|\tilde{S}_{n,c}$, hence it holds that 
\[
\sup_{c\in \mathcal{C}}\sup_{u\in\mathbb{U}_n(c)}\|A_{n,c}(u)\|_\mathrm{sp}\leq\sup_{c\in\mathcal{C}}|c|\sup_{c\in\mathcal{C}}2\eta_n(\|\tilde{T}_{n,c}\|_\mathrm{sp}+\|\tilde{S}_{n,c}\|_\mathrm{sp}).
\]
Here, we use the fact that $c+r_n\eta_n^{-1}u\in\mathcal{C}$ because of $u\in\mathbb{U}_n(c)$. In particular, we have $\sup_{c\in \mathcal{C}}\sup_{u\in\mathbb{U}_n(c)}\|A_{n,c}(u)\|_\mathrm{sp}<1$ for sufficiently large $n$ by \eqref{eta} and Lemma \ref{lemma:local}. For such an $n$ we have $B_{n,c}(u)=\sum_{k=1}^\infty A_{n,c}(u)^k$ and thus
\[
\|B_{n,c}(u)\|_\mathrm{sp}\leq\frac{\|A_{n,c}(u)\|_\mathrm{sp}}{1-\|A_{n,c}(u)\|_\mathrm{sp}},\qquad
\|B_{n,c}(u)\|_F\leq\frac{\|A_{n,c}(u)\|_F}{1-\|A_{n,c}(u)\|_\mathrm{sp}},
\]
where we use the inequality $\|A_{n,c}(u)^k\|_F\leq\|A_{n,c}(u)\|_F\|A_{n,c}(u)\|_\mathrm{sp}^{k-1}$ for $k\geq1$ to obtain the latter estimate. Therefore, for sufficiently large $n$ we have for any $c\in \mathcal{C}$ and any $u\in\mathbb{U}_n(c)$
\begin{align*}
\left|\mathbb{I}_{n,c}(u)\right|
\leq\frac{1}{12}\frac{\|A_{n,c}(u)\|_\mathrm{sp}\|A_{n,c}(u)\|_F^2}{1-\|A_{n,c}(u)\|_\mathrm{sp}^3},\qquad
\left|\mathbb{II}_{n,c}(u)\right|
\leq\frac{1}{6}\frac{\|B_{n,c}(u)\|_\mathrm{sp}\|B_{n,c}(u)\|_F^2}{8-\|B_{n,c}(u)\|_\mathrm{sp}^3}
\end{align*}
by Appendix II-(v) from \cite{Davies1973} and
\begin{align*}
\left|\mathbb{III}_{n,c}(u)\right|
\leq\sum_{k=3}^\infty\left|\trace(A_{n,c}(u)^k)\right|
\leq\|A_{n,c}(u)\|_F^2\frac{\|A_{n,c}(u)\|_\mathrm{sp}}{1-\|A_{n,c}(u)\|_\mathrm{sp}}
\end{align*}
by the inequality $\left|\trace(A_{n,c}(u)^k)\right|\leq\|A_{n,c}(u)\|_F^2\|A_{n,c}(u)\|_\mathrm{sp}^{k-2}$ for $k\geq3$, as well as
\begin{align*}
\mathbb{IV}_{n,c}(u)
\leq-\frac{\|A_{n,c}(u)\|_F^2}{8}\left(1-\frac{1}{2(1-\|A_{n,c}(u)\|_\mathrm{sp})^2}\right).
\end{align*}
Consequently, there is a constant $\kappa_0>0$ such that for sufficiently large $n$ it holds that
\[
\log\widetilde{\mathbb{E}}_{n,c\eta_n}\left[\sqrt{ Z_{n,c}(u)}\right]
\leq-\kappa_0\|A_{n,c}(u)\|_F^2
\]
for any $c\in \mathcal{C}$ and any $u\in\mathbb{U}_n(c)$. Now we consider giving an upper bound for $-\|A_{n,c}(u)\|_F^2$. We have
\begin{align*}
-\|A_{n,c}(u)\|_F^2
&=-u^2r_n^2\|\tilde{T}_{n,c}\|_F^2-2r_nu(|c\eta_n+r_nu|-|c\eta_n|)\trace(\tilde{T}_{n,c}\tilde{S}_{n,c})-(|c\eta_n+r_nu|-|c\eta_n|)^2\|\tilde{S}_{n,c}\|_F^2\\
&\leq-2u^2I_\gamma+u^2\left(\left|r_n^2\|\tilde{T}_{n,c}\|_F^2-2I_\gamma\right|+2\left|r_n^2\trace(\tilde{T}_{n,c}\tilde{S}_{n,c})\right|\right).
\end{align*}
Therefore, noting $I_\gamma>0$, by Lemma \ref{lemma:local} for sufficiently large $n$ we have
$-\|A_{n,c}(u)\|_F^2
\leq-I_\gamma u^2$ 
for any $c\in \mathcal{C}$ and any $u\in\mathbb{U}_n(c)$. Consequently, we obtain \ref{IH2} by setting $\kappa=\kappa_0I_\gamma$.
\end{proof}

If $\gamma<\infty$, \eqref{eta} is equivalent to the condition that $\eta_n=o(n^{-1})$ as $n\to\infty$. Therefore, Proposition \ref{prop:hellinger}(a) yields the following result:
\begin{cor}\label{coincide}
If $\gamma<\infty$, the statement of Proposition \ref{prop:IH} still holds true while $\widetilde{\mathbb{P}}_{n,c\eta_n}$'s are replaced by $\mathbb{P}_{n,c\eta_n}$'s. 
\end{cor}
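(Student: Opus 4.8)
The plan is to exploit the fact that, when $\gamma<\infty$, the tractable family $\widetilde{\mathbb{P}}_{n,c\eta_n}$ and the original family $\mathbb{P}_{n,c\eta_n}$ do not merely become Hellinger-close but in fact coincide exactly for all large $n$, uniformly over the bounded parameter set $\mathcal{C}$. Once this exact identity of measures is in hand, every distributional and moment assertion of Proposition \ref{prop:IH} transfers verbatim from $\widetilde{\mathbb{P}}_{n,c\eta_n}$ to $\mathbb{P}_{n,c\eta_n}$, with no perturbation argument needed at all. In other words, the content of the corollary is not a contiguity estimate of the kind used to pass from $\mathbb{P}_{n,\vartheta}$ to $\widetilde{\mathbb{P}}_{n,\vartheta}$ in the proof of Theorem \ref{thm:main}, but the much stronger exact equality supplied by Proposition \ref{prop:hellinger}(a).

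First I would record the reduction of \eqref{eta} noted just before the corollary: since $\gamma<\infty$ forces $N_n=n$, we have $n^{-1}\vee N_n^{-\frac{4}{3}}=n^{-1}$ for large $n$, so in particular \eqref{eta} entails $\eta_n=o(n^{-1})$. Next, using that $\mathcal{C}$ is a bounded open interval, I would set $M=\sup_{c\in\mathcal{C}}|c|<\infty$, so that $|c\eta_n|\leq M\eta_n$ for every $c\in\mathcal{C}$. Because $M\eta_n=o(n^{-1})$, there exists an integer $n_1$, \emph{not depending on} $c$, such that $M\eta_n\leq n^{-1}$, and hence $|c\eta_n|\leq n^{-1}$ simultaneously for all $c\in\mathcal{C}$ and all $n\geq n_1$. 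I would then apply Proposition \ref{prop:hellinger}(a) with $\vartheta=c\eta_n$, whose hypothesis $|\vartheta|\leq1/n$ is exactly the inequality just verified, to conclude $\mathbb{P}_{n,c\eta_n}=\widetilde{\mathbb{P}}_{n,c\eta_n}$ for every $c\in\mathcal{C}$ and every $n\geq n_1$.

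The transfer of conclusions is then immediate. Since the estimators $\hat{c}_n$ and $\tilde{c}_n$ are measurable functionals of the observation vector $\mathbf{Z}_n$, and the law of $\mathbf{Z}_n$ is identical under $\mathbb{P}_{n,c\eta_n}$ and $\widetilde{\mathbb{P}}_{n,c\eta_n}$ for $n\geq n_1$, the law of $r_n^{-1}\eta_n(\hat{c}_n-c)$ and all of its moments agree under the two measures (and likewise for $\tilde{c}_n$). Consequently the convergence in law to $\hat{u}$ (resp.\ $\tilde{u}$) and the moment convergence $\widetilde{\mathbb{E}}_{n,c\eta_n}[|r_n^{-1}\eta_n(\hat{c}_n-c)|^p]\to E[|\hat{u}|^p]$ furnished by Proposition \ref{prop:IH} hold with $\widetilde{\mathbb{P}}_{n,c\eta_n}$ and $\widetilde{\mathbb{E}}_{n,c\eta_n}$ replaced throughout by $\mathbb{P}_{n,c\eta_n}$ and $\mathbb{E}_{n,c\eta_n}$. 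The uniformity over a compact $\mathcal{K}\subset\mathcal{C}$ is inherited at no extra cost, since the identity of measures holds for all $c\in\mathcal{C}$ past the single threshold $n_1$. There is essentially no obstacle to overcome here; the only point requiring care is that the smallness $|c\eta_n|\leq1/n$ be uniform in $c$, and this is guaranteed precisely by the boundedness of $\mathcal{C}$.
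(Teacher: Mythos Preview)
Your proposal is correct and follows exactly the paper's own argument: when $\gamma<\infty$ one has $N_n=n$, so \eqref{eta} forces $\eta_n=o(n^{-1})$, and then Proposition~\ref{prop:hellinger}(a) gives the exact identity $\mathbb{P}_{n,c\eta_n}=\widetilde{\mathbb{P}}_{n,c\eta_n}$ for all $c\in\mathcal{C}$ and large $n$, from which the corollary is immediate. The paper compresses this into a single sentence preceding the corollary, while you spell out the uniformity in $c$ via the bound $M=\sup_{c\in\mathcal{C}}|c|$, but the substance is the same.
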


Now we return to the efficient estimation of the parameter $c$ in the model $(\mathbb{P}_{n,c\eta_n})_{c\in \mathcal{C}}$. First we consider the case $\gamma=\infty$. In this case we know that $(\mathbb{P}_{n,c\eta_n})_{c\in \mathcal{C}}$ enjoy the LAN property at every $c\in \mathcal{C}$ by Proposition \ref{prop:infinitesimal}, so the definition of the asymptotic efficiency of an estimator sequence is well-established as explained in the above.  

\begin{theorem}
If $\gamma=\infty$, both $\hat{c}_n$ and $\tilde{c}_n$ are asymptotically efficient at every $c\in\mathcal{C}$ in the experiments $(\mathbb{P}_{n,c\eta_n})_{c\in \mathcal{C}}$. That is, both $r_n^{-1}\eta_n(\hat{c}_n-c)$ and $r_n^{-1}\eta_n(\tilde{c}_n-c)$ converge in law to $  \mathcal{N}(0,I_\gamma^{-1})$ under $\mathbb{P}_{n,c\eta_n}$ for any $c\in \mathcal{C}$ as $n\to\infty$. In particular, both $\hat{\vartheta}_n$ and $\tilde{\vartheta}_n$ are asymptotically efficient at $\vartheta=0$ in the experiments $(\mathbb{P}_{n,\vartheta})_{\vartheta\in \mathbb{R}}$. 
\end{theorem}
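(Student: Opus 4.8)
The plan is to obtain the limit laws of $\hat{c}_n$ and $\tilde{c}_n$ directly from Proposition \ref{prop:IH} and then transport them from the tractable model $\widetilde{\mathbb{P}}$ to the original model $\mathbb{P}$. First I would specialize the generic limit variables $\hat{u}$ and $\tilde{u}$ to the regime $\gamma=\infty$. By \eqref{def:J} we have $J_\gamma=0$, so that $\zeta_2=0$ almost surely and the limiting likelihood ratio process collapses to the Gaussian shift form $Z(u)=\exp(u\zeta_1-\tfrac{u^2}{2}I_\gamma)$. A one-line computation then gives $\hat{u}=\zeta_1/I_\gamma$; moreover, since $u\mapsto Z(u)$ is proportional to the density of $\mathcal{N}(\zeta_1/I_\gamma,I_\gamma^{-1})$, the posterior mean $\tilde{u}$ coincides with $\hat{u}$. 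Because $\zeta_1\sim\mathcal{N}(0,I_\gamma)$ and $I_\gamma>0$, both $\hat{u}$ and $\tilde{u}$ are distributed as $\mathcal{N}(0,I_\gamma^{-1})$.

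Given this, Proposition \ref{prop:IH} already yields that $r_n^{-1}\eta_n(\hat{c}_n-c)$ and $r_n^{-1}\eta_n(\tilde{c}_n-c)$ converge in law to $\mathcal{N}(0,I_\gamma^{-1})$ under $\widetilde{\mathbb{P}}_{n,c\eta_n}$, uniformly over $c$ in any compact $\mathcal{K}\subset\mathcal{C}$. The only step requiring care in the regime $\gamma=\infty$ is the passage from $\widetilde{\mathbb{P}}_{n,c\eta_n}$ to $\mathbb{P}_{n,c\eta_n}$, since Corollary \ref{coincide} covers only $\gamma<\infty$ (where the two models coincide exactly). As the normalized estimation errors are measurable functions of $\mathbf{Z}_n$, I would invoke the total-variation transfer \eqref{total} together with \eqref{eq:A1}: setting $\vartheta_n=\eta_n\sup_{c\in\mathcal{C}}|c|=O(\eta_n)$, the boundedness of $\mathcal{C}$ and the first half of \eqref{eta} give $\vartheta_n=o(n^{-1}\vee N_n^{-4/3})$, so Proposition \ref{prop:hellinger}(c) yields $\sup_{c\in\mathcal{C}}H(\mathbb{P}_{n,c\eta_n},\widetilde{\mathbb{P}}_{n,c\eta_n})\leq\sup_{|\vartheta|\leq\vartheta_n}H(\mathbb{P}_{n,\vartheta},\widetilde{\mathbb{P}}_{n,\vartheta})\to0$. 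Hence $V(\mathbb{P}_{n,c\eta_n},\widetilde{\mathbb{P}}_{n,c\eta_n})\to0$ uniformly in $c$, and the same weak limit $\mathcal{N}(0,I_\gamma^{-1})$ persists under $\mathbb{P}_{n,c\eta_n}$.

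It then remains to read this off as asymptotic efficiency. Because $\gamma=\infty$, Proposition \ref{prop:infinitesimal} shows $(\mathbb{P}_{n,c\eta_n})_{c\in\mathcal{C}}$ is LAN at every $c$ with Fisher information $I_\gamma$, so the optimal limiting variance is the inverse Fisher information $I_\gamma^{-1}$; the uniform-on-compacts convergence transported above supplies the regularity needed by the efficiency notion of Section II-11 of \cite{IH1981}, and since the attained limit $\mathcal{N}(0,I_\gamma^{-1})$ meets this bound, both $\hat{c}_n$ and $\tilde{c}_n$ are efficient at every $c\in\mathcal{C}$. The concluding ``in particular'' assertion follows by taking $c=0\in\mathcal{C}$ and using $\hat{\vartheta}_n=\hat{c}_n\eta_n$, $\tilde{\vartheta}_n=\tilde{c}_n\eta_n$, together with the identity $r_n^{-1}(\hat{\vartheta}_n-c\eta_n)=r_n^{-1}\eta_n(\hat{c}_n-c)$: at $c=0$ this gives $r_n^{-1}\hat{\vartheta}_n\xrightarrow{d}\mathcal{N}(0,I_\gamma^{-1})$ under $\mathbb{P}_{n,0}$, and likewise for $\tilde{\vartheta}_n$, matching the LAN structure of $(\mathbb{P}_{n,\vartheta})_{\vartheta\in\mathbb{R}}$ at $\vartheta=0$. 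I expect the main obstacle to be precisely this $\widetilde{\mathbb{P}}\to\mathbb{P}$ transfer in the $\gamma=\infty$ case, where exact coincidence of the two models fails and one must instead lean on the Hellinger bound of Proposition \ref{prop:hellinger}(c).
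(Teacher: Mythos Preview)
Your proposal is correct and matches exactly what the paper leaves implicit: the theorem is stated without proof, evidently as an immediate consequence of Proposition~\ref{prop:IH} (specialized to $J_\gamma=0$, whence $\hat u=\tilde u=\zeta_1/I_\gamma\sim\mathcal{N}(0,I_\gamma^{-1})$), transported from $\widetilde{\mathbb P}$ to $\mathbb P$ via Proposition~\ref{prop:hellinger}(c) and \eqref{eq:A1}--\eqref{total}, and interpreted through the LAN structure of Proposition~\ref{prop:infinitesimal}. The only implicit hypothesis you invoke for the ``in particular'' clause, namely $0\in\mathcal C$, is equally implicit in the paper's formulation and is harmless.
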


Next we turn to the case $\gamma<\infty$. In this case the experiments $(\mathbb{P}_{n,c\eta_n})_{c\in \mathcal{C}}$ no longer enjoy the LAN property, so the definition of the asymptotic efficiency is not obvious. As explained in the above, here we follow the approach of \citet{KK2000} to define the asymptotic efficiency for our experiments. We obtain the following result by virtue of Corollary \ref{coincide} and Theorem I-9.1 of \cite{IH1981}:
\begin{theorem}\label{thm:minimax}
If $\gamma<\infty$, we have
\[
\lim_{\delta\to0}\liminf_{n\to\infty}\sup_{|c-c_0|<\delta}(r_n^{-1}\eta_n)^2\mathbb{E}_{n,c\eta_n}[(c^*_n-c)^2]\geq E[\tilde{u}^2]
\]
for any $c_0\in \mathcal{C}$ and any estimator sequence $c^*_n$ in the experiments $(\mathbb{P}_{n,c\eta_n})_{c\in \mathcal{C}}$. In particular, we also have
\[
\lim_{\delta\to0}\liminf_{n\to\infty}\sup_{|\vartheta|<\delta\eta_n}r_n^{-2}\mathbb{E}_{n,\vartheta}[(\vartheta^*_n-\vartheta)^2]\geq E[\tilde{u}^2]
\]
for any estimator sequence $\vartheta^*_n$ in the experiments $(\mathbb{P}_{n,\vartheta})_{\vartheta\in \mathbb{R}}$.
\end{theorem}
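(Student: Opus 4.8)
The plan is to read the bound off the general local asymptotic minimax theory of \citet{IH1981}, the point of departure being Corollary~\ref{coincide}. Since $\gamma<\infty$ gives $N_n=n$, condition \eqref{eta} reduces to $\eta_n=o(n^{-1})$, and because $\mathcal{C}$ is bounded we have $|c\eta_n|\le 1/n$ for all $c\in\mathcal{C}$ once $n$ is large, so Proposition~\ref{prop:hellinger}(a) yields $\mathbb{P}_{n,c\eta_n}=\widetilde{\mathbb{P}}_{n,c\eta_n}$. Hence the likelihood ratio process $Z_{n,c}(u)=\mathrm{d}\mathbb{P}_{n,c\eta_n+r_nu}/\mathrm{d}\mathbb{P}_{n,c\eta_n}$ coincides for large $n$ with the one studied in the proof of Proposition~\ref{prop:IH}, and the three estimates \ref{IH1}--\ref{IH3} verified there (the uniform $L^2$-H\"older continuity of $\sqrt{Z_{n,c}}$, the Gaussian bound $\sup e^{\kappa u^2}\mathbb{E}_{n,c\eta_n}[\sqrt{Z_{n,c}(u)}]<\infty$, and the uniform finite-dimensional convergence $Z_{n,c}\Rightarrow Z$) carry over verbatim to the $\mathbb{P}$-experiments.

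With these in hand I would invoke Theorem~I-9.1 of \citet{IH1981} with the rate $\varphi_n=r_n\eta_n^{-1}$ (note $\varphi_n\to0$ by \eqref{eta}) and the quadratic loss $w(u)=u^2$. That theorem asserts that, once the normalized likelihood ratios converge to $Z$ and obey the continuity and tail estimates above, the normalized local minimax quadratic risk at $c_0$ is bounded below by the Bayes risk of the limiting location problem driven by $Z$. Because $I_\gamma>0$, the process $Z(u)=\exp(u\zeta_1+|u|\zeta_2-\tfrac{u^2}{2}(I_\gamma+J_\gamma))$ has Gaussian decay, so $\int_{\mathbb{R}}Z(u)\,\mathrm{d}u<\infty$ almost surely and $\tilde u=\int uZ(u)\,\mathrm{d}u/\int Z(u)\,\mathrm{d}u$ is well defined with $E[\tilde u^2]<\infty$; as the posterior mean, $\tilde u$ is precisely the generalized Bayes estimator for quadratic loss under the (improper) Lebesgue prior, and the associated Bayes risk is $E[\tilde u^2]$. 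This yields the first displayed inequality,
\[
\lim_{\delta\to0}\liminf_{n\to\infty}\sup_{|c-c_0|<\delta}(r_n^{-1}\eta_n)^2\mathbb{E}_{n,c\eta_n}[(c^*_n-c)^2]\ge E[\tilde u^2],
\]
for every $c_0\in\mathcal{C}$ and every estimator sequence $c^*_n$.

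Finally I would derive the statement for $\vartheta$ by reparametrization. Choosing $c_0=0$ and a bounded open interval $\mathcal{C}\ni0$, to any estimator sequence $\vartheta^*_n$ of $\vartheta$ I associate $c^*_n=\eta_n^{-1}\vartheta^*_n$, an estimator of $c$. Since $\vartheta=c\eta_n$ gives $r_n^{-2}(\vartheta^*_n-\vartheta)^2=(r_n^{-1}\eta_n)^2(c^*_n-c)^2$ and $\{|c|<\delta'\}$ corresponds to $\{|\vartheta|<\delta'\eta_n\}$, for each fixed $\delta,\delta'>0$ we have $\delta'\eta_n<\delta$ once $n$ is large, whence
\[
\liminf_{n\to\infty}\sup_{|\vartheta|<\delta}r_n^{-2}\mathbb{E}_{n,\vartheta}[(\vartheta^*_n-\vartheta)^2]\ge\liminf_{n\to\infty}\sup_{|c|<\delta'}(r_n^{-1}\eta_n)^2\mathbb{E}_{n,c\eta_n}[(c^*_n-c)^2].
\]
Letting $\delta\to0$ on the left and then $\delta'\to0$ on the right, and applying the first inequality with $c_0=0$, gives the claimed bound for $\vartheta^*_n$. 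The only genuinely delicate points are matching the abstract hypotheses of Theorem~I-9.1 to the concrete estimates \ref{IH1}--\ref{IH3} (so that the limiting risk is identified as the Bayes risk $E[\tilde u^2]$ rather than some other functional of $Z$) and keeping the order of the iterated limits straight in the reparametrization; everything else is bookkeeping already prepared by Corollary~\ref{coincide}.
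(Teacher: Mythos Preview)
Your proposal is correct and follows exactly the route the paper takes: the paper's proof is simply the sentence ``by virtue of Corollary~\ref{coincide} and Theorem~I-9.1 of \cite{IH1981}'', and you have unpacked precisely what that citation entails, including the identification of $\mathbb{P}_{n,c\eta_n}$ with $\widetilde{\mathbb{P}}_{n,c\eta_n}$ via Proposition~\ref{prop:hellinger}(a), the transfer of conditions \ref{IH1}--\ref{IH3}, and the reparametrization argument for the second display (which the paper leaves implicit).
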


Thanks to Theorem \ref{thm:minimax}, an estimator sequence $c^*_n$ is said to be asymptotically efficient at $c_0\in \mathcal{C}$ in the experiments $(\mathbb{P}_{n,c\eta_n})_{c\in \mathcal{C}}$ if it holds that
\[
\lim_{\delta\to0}\liminf_{n\to\infty}\sup_{|c-c_0|<\delta}(r_n^{-1}\eta_n)^2\mathbb{E}_{n,c\eta_n}[(c^*_n-c)^2]=E[\tilde{u}^2].
\]
Similarly, an estimator sequence $\vartheta^*_n$ is said to be asymptotically efficient at $\vartheta=0$ in the experiments $(\mathbb{P}_{n,\vartheta})_{\vartheta\in \mathbb{R}}$ if it holds that
\[
\lim_{\delta\to0}\liminf_{n\to\infty}\sup_{|\vartheta|<\delta\eta_n}r_n^{-2}\mathbb{E}_{n,\vartheta}[(\vartheta^*_n-\vartheta)^2]=E[\tilde{u}^2]
\]
for any sequence $\eta_n$ of positive numbers satisfying \eqref{eta}. 
The following result is an immediate consequence of this definition. 
\begin{theorem}
If $\gamma<\infty$, the sequence $\tilde{c}_n$ is asymptotically efficient at every $c\in\mathcal{C}$ in the experiments $(\mathbb{P}_{n,c\eta_n})_{c\in \mathcal{C}}$. In particular, the sequence $\tilde{\vartheta}_n$ is asymptotically efficient at $\vartheta=0$ in the experiments $(\mathbb{P}_{n,\vartheta})_{\vartheta\in\mathbb{R}}$.
\end{theorem}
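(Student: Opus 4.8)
The plan is to read the theorem off directly from the uniform risk convergence established in Corollary~\ref{coincide}, using Theorem~\ref{thm:minimax} only to identify $E[\tilde u^2]$ as the efficiency benchmark. By the definition adopted just before the statement, $\tilde c_n$ is asymptotically efficient at $c_0\in\mathcal{C}$ exactly when
\[
\lim_{\delta\to0}\liminf_{n\to\infty}\sup_{|c-c_0|<\delta}(r_n^{-1}\eta_n)^2\mathbb{E}_{n,c\eta_n}[(\tilde{c}_n-c)^2]=E[\tilde u^2],
\]
the right-hand side being precisely the minimax lower bound of Theorem~\ref{thm:minimax}. Hence the whole task reduces to verifying this single identity for the Bayesian estimator $\tilde c_n$.

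First I would fix $c_0\in\mathcal{C}$ and use the openness of $\mathcal{C}$ to pick $\delta_0>0$ so small that $\mathcal{K}_\delta:=\{c:|c-c_0|\le\delta\}\subset\mathcal{C}$ for every $\delta\le\delta_0$; each such $\mathcal{K}_\delta$ is compact. Since $\gamma<\infty$, Corollary~\ref{coincide} applies with $\mathcal{K}=\mathcal{K}_\delta$ and $p=2$, giving the uniform convergence
\[
\sup_{c\in\mathcal{K}_\delta}\left|(r_n^{-1}\eta_n)^2\mathbb{E}_{n,c\eta_n}[(\tilde c_n-c)^2]-E[\tilde u^2]\right|\longrightarrow0\qquad(n\to\infty).
\]
Because the rescaled risk tends to the constant $E[\tilde u^2]$ uniformly over $\mathcal{K}_\delta$, its supremum over the open ball $\{|c-c_0|<\delta\}$ converges to $E[\tilde u^2]$ as well (using $|\sup g-\sup h|\le\sup|g-h|$). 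Consequently $\liminf_{n\to\infty}\sup_{|c-c_0|<\delta}(\cdots)=E[\tilde u^2]$ for every $\delta\le\delta_0$, and sending $\delta\to0$ yields the required identity. As $c_0\in\mathcal{C}$ was arbitrary, this establishes efficiency at every point of $\mathcal{C}$.

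For the ``in particular'' statement about $\tilde\vartheta_n=\tilde c_n\eta_n$ I would transport the result through the reparametrization $\vartheta=c\eta_n$, exactly as in the passage from the first to the second display of Theorem~\ref{thm:minimax}. Writing $\tilde\vartheta_n-\vartheta=\eta_n(\tilde c_n-c)$ turns the rescaled $c$-risk into $r_n^{-2}\mathbb{E}_{n,\vartheta}[(\tilde\vartheta_n-\vartheta)^2]$ and maps the localizing ball $\{|c-c_0|<\delta\}$ onto the shrinking $\vartheta$-neighborhood $\{|\vartheta-c_0\eta_n|<\delta\eta_n\}$; combined with the minimax lower bound of Theorem~\ref{thm:minimax} for $\vartheta$, the efficiency of $\tilde c_n$ at $c_0$ then delivers the claimed efficiency of $\tilde\vartheta_n$ at $\vartheta=0$.

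The only genuinely delicate ingredient is the uniformity in $c$ over a full closed neighborhood of $c_0$: mere pointwise moment convergence would not suffice, since the supremum over $c$ must be taken \emph{before} the $\liminf$ in $n$. That uniformity is, however, precisely what Corollary~\ref{coincide} supplies, inherited from Proposition~\ref{prop:IH}, whose proof verifies the Ibragimov--Has'minskii conditions uniformly on compacta via Lemma~\ref{lemma:local}. Everything else being routine bookkeeping, I anticipate no substantive obstacle beyond invoking that uniform statement correctly.
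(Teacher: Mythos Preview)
Your proposal is correct and follows exactly the route the paper intends: the paper simply declares the theorem ``an immediate consequence of this definition,'' meaning one plugs the uniform $L^2$ risk convergence of Corollary~\ref{coincide} (with $p=2$) into the efficiency definition, which is precisely what you do. Your write-up just makes the bookkeeping explicit, and your remark that the uniformity over compacta is the crucial ingredient is on point.
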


In contrast, there is no guarantee of the asymptotic efficiency of the (Q)MLE $\hat{c}_n$ if $\gamma<\infty$. In fact, $\tilde{c}_n$ may perform much better than $\hat{c}_n$ if $\gamma=0$, as shown in the following proposition.%In such a non-standard situation it is often paid attention to how the MLE is far from the asymptotically efficient estimators as in Section VII-3 of \cite{IH1981} and \cite{RS1995}. Therefore, we also investigate the asymptotic efficiency of $\hat{c}_n$. 
\begin{prop}
It holds that
\begin{align}
E\left[\hat{u}^2\right]
&=\frac{1}{I_\gamma+J_\gamma}\left(1-\frac{1}{\pi}\arctan\left(\sqrt{\frac{J_\gamma}{I_\gamma}}\right)+\frac{\sqrt{I_\gamma J_\gamma}}{\pi(I_\gamma+J_\gamma)}\right),\label{mle:var}\\
E\left[\tilde{u}^2\right]
&=\frac{1}{I_\gamma+J_\gamma}\int_{-\infty}^\infty\int_{-\infty}^\infty\left(\frac{x\Psi(x)-y\Psi(y)}{\Psi(x)+\Psi(y)}\right)^2\psi_R(x,y)\mathrm{d}x\mathrm{d}y,\label{bayes:var}
\end{align}
where $\Psi(x)=\int_0^\infty e^{ux-u^2/2}\mathrm{d}u$ and $\psi_R(x,y)$ denotes the bivariate normal density with standard normal marginals and correlation $R=(J_\gamma-I_\gamma)/(J_\gamma+I_\gamma)$. In particular, $\lim_{|\rho|\to1}E\left[\tilde{u}^2\right]/E\left[\hat{u}^2\right]=0$ if $\gamma=0$.
\end{prop}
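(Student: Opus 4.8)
Throughout write $I=I_\gamma$, $J=J_\gamma$, $K=I+J$, and recall $\zeta_1\sim\mathcal{N}(0,I)$ and $\zeta_2\sim\mathcal{N}(0,J)$ are independent. The plan is to treat the three assertions separately: the closed form \eqref{mle:var}, the closed form \eqref{bayes:var}, and the limit.

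\textbf{The MLE variance \eqref{mle:var}.} First I would read off from the explicit form of $\hat u$ that only the two regions $\{\zeta_1>(-\zeta_2)\vee0\}$ and $\{\zeta_1<\zeta_2\wedge0\}$ contribute, and observe that the measure-preserving map $(\zeta_1,\zeta_2)\mapsto(-\zeta_1,\zeta_2)$ sends the first contribution to the second, so that
\[
E[\hat u^2]=\frac{2}{K^2}\,E\!\left[(\zeta_1+\zeta_2)^2\,\mathbf{1}_{\{\zeta_1>0,\ \zeta_1+\zeta_2>0\}}\right].
\]
Setting $S=\zeta_1+\zeta_2\sim\mathcal{N}(0,K)$, the pair $(\zeta_1,S)$ is centered Gaussian with $\mathrm{Cov}(\zeta_1,S)=I$, hence normalized correlation $\rho_0=\sqrt{I/K}$. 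Conditioning on $S$ and using $P(\zeta_1>0\mid S=w\sqrt{K})=\Phi(aw)$ with $a=\rho_0/\sqrt{1-\rho_0^2}=\sqrt{I/J}$ reduces everything to $g(a):=\int_0^\infty w^2\Phi(aw)\phi(w)\,dw$, giving $E[\hat u^2]=\tfrac{2}{K}g(a)$. I would evaluate $g$ by differentiating under the integral sign: since $\phi(aw)\phi(w)=\tfrac1{2\pi}e^{-(1+a^2)w^2/2}$, one gets $g'(a)=\tfrac1{2\pi}\int_0^\infty w^3e^{-(1+a^2)w^2/2}\,dw=\tfrac1{\pi(1+a^2)^2}$, and integrating from $g(0)=\tfrac14$ via $\int(1+s^2)^{-2}ds=\tfrac12\big(s/(1+s^2)+\arctan s\big)$ yields $g(a)=\tfrac12+\tfrac1{2\pi}\big(\tfrac{a}{1+a^2}+\arctan a\big)$. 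Substituting $a=\sqrt{I/J}$, using $\tfrac{a}{1+a^2}=\sqrt{IJ}/K$ and the identity $\arctan\sqrt{I/J}=\tfrac\pi2-\arctan\sqrt{J/I}$, produces exactly \eqref{mle:var}.

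\textbf{The Bayesian variance \eqref{bayes:var}.} Here I would split the defining integrals of $\tilde u$ at $u=0$. For $u>0$ one has $Z(u)=\exp\!\big(u(\zeta_1+\zeta_2)-\tfrac{u^2K}2\big)$, and $u=t/\sqrt K$ turns the $u^k$-moment into $K^{-(k+1)/2}\int_0^\infty t^k e^{t\xi_1-t^2/2}\,dt$ with $\xi_1=(\zeta_1+\zeta_2)/\sqrt K$; for $u<0$ the substitution $u=-t/\sqrt K$ gives $K^{-(k+1)/2}(-1)^k\int_0^\infty t^k e^{-t\xi_2-t^2/2}\,dt$ with $\xi_2=(\zeta_1-\zeta_2)/\sqrt K$. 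Recognizing the $k=0$ and $k=1$ integrals as $\Psi$ and $\Psi'$, and establishing the identity $\Psi'(x)=x\Psi(x)+1$ by integrating $\tfrac{d}{du}e^{ux-u^2/2}=(x-u)e^{ux-u^2/2}$ over $(0,\infty)$, the constant $+1$ terms cancel in the numerator and I obtain, with $X=\xi_1$ and $Y=-\xi_2$,
\[
\tilde u=\frac1{\sqrt K}\,\frac{X\Psi(X)-Y\Psi(Y)}{\Psi(X)+\Psi(Y)}.
\]
Finally $(X,Y)$ is bivariate standard normal with $\mathrm{Corr}(X,Y)=-\mathrm{Corr}(\xi_1,\xi_2)=-(I-J)/K=(J-I)/(J+I)=R$, so that $E[\tilde u^2]=\tfrac1K\,E[f(X,Y)^2]$ with $f(x,y)=(x\Psi(x)-y\Psi(y))/(\Psi(x)+\Psi(y))$, which is \eqref{bayes:var}.

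\textbf{The limit.} When $\gamma=0$ one computes $J/I=3(1+\rho^2)/(1-\rho^2)\to\infty$ as $|\rho|\to1$, whence $R\to1$. In \eqref{mle:var} the bracket tends to $1-\tfrac1\pi\cdot\tfrac\pi2+0=\tfrac12>0$, so it suffices to show that the integral in \eqref{bayes:var} vanishes as $R\to1$. Writing $Y=RX+\sqrt{1-R^2}\,G$ with $X,G$ independent standard normal, this integral equals $E[f(X,Y)^2]$. The decisive observation is that, since $\Psi>0$, the quotient $f$ is in absolute value a convex combination of $|x|$ and $|y|$, so $|f(x,y)|\le\max(|x|,|y|)$; consequently $f(X,Y)^2\le X^2+Y^2\le 3X^2+2G^2$ uniformly for $R\in[\tfrac12,1]$, an integrable bound. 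As $R\to1$ we have $Y\to X$ a.s., and $f(X,X)=0$, so dominated convergence gives $E[f(X,Y)^2]\to0$ and hence $E[\tilde u^2]/E[\hat u^2]\to0$.

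\textbf{Main obstacle.} The genuinely non-routine steps I expect to be the crux are the differentiation-under-the-integral evaluation of $g(a)$ together with the $\arctan$ bookkeeping for the MLE, and—more delicately—the uniform bound $|f(x,y)|\le\max(|x|,|y|)$, which is what legitimizes dominated convergence against the degenerating density $\psi_R$; without it the concentration of $\psi_R$ on the diagonal is hard to control. Verifying $\Psi'(x)=x\Psi(x)+1$ and tracking the signs through the two substitutions is the principal bookkeeping hazard in the Bayesian computation.
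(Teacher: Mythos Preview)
Your proof is correct and follows essentially the same architecture as the paper's: symmetry reduction for $E[\hat u^2]$, rewriting $\tilde u$ in terms of $\Psi$ evaluated at $(\zeta_1\pm\zeta_2)/\sqrt K$, identification of the correlation $R$, and dominated convergence for the limit. The differences are purely technical: where the paper cites Gradshteyn--Ryzhik for the MLE integral and for $\int uZ(u)\,du$, you give self-contained computations (differentiating $g$ under the integral, and the identity $\Psi'(x)=x\Psi(x)+1$), and you make the dominating function explicit via the convex-combination bound $|f(x,y)|\le\max(|x|,|y|)$, which the paper leaves implicit. One slip to fix: after correctly stating $g(0)=\tfrac14$ you write $g(a)=\tfrac12+\tfrac1{2\pi}(\cdots)$; the constant should be $\tfrac14$, and with that value the substitution $a=\sqrt{I/J}$ and $\arctan\sqrt{I/J}=\tfrac\pi2-\arctan\sqrt{J/I}$ do yield \eqref{mle:var}.
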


\begin{proof}
Let us denote by $\phi_a$ the normal density with mean 0 and variance $a$. Then, a simple calculation yields
\begin{align*}
E\left[\hat{u}^2\right]%&=\frac{2}{(I_\gamma+J_\gamma)^2}E[( \zeta_1+ \zeta_2)^2; \zeta_1>0, \zeta_1+ \zeta_2>0]\\
%&=\frac{2}{(I_\gamma+J_\gamma)^2}\int_0^\infty\phi_{I_\gamma}(x)\mathrm{d}x\int_{-x}^\infty(x+y)^2\phi_{J_\gamma}(y)\mathrm{d}y\\
&=\frac{2}{(I_\gamma+J_\gamma)^2}\int_0^\infty z^2\mathrm{d}z\int_{0}^\infty \phi_{I_\gamma}(x)\phi_{J_\gamma}(z-x)\mathrm{d}x.
\end{align*}
By formulae (3.322.2) and (6.292) from \cite{GR2007} we have
\begin{align*}
\int_0^\infty z^2\mathrm{d}z\int_{0}^\infty \phi_{I_\gamma}(x)\phi_{J_\gamma}(z-x)\mathrm{d}x
=\frac{I_\gamma+J_\gamma}{2}-\frac{I_\gamma+J_\gamma}{2\pi}\arctan\left(\sqrt{\frac{J_\gamma}{I_\gamma}}\right)+\frac{\sqrt{I_\gamma J_\gamma}}{2\pi},
\end{align*}
hence we obtain \eqref{mle:var}.

Next, by a change of variable we obtain
\[
\int_{-\infty}^\infty Z(u)\mathrm{d}u=\frac{1}{\sqrt{I_\gamma+J_\gamma}}\left\{\Psi\left(\frac{ \zeta_1+ \zeta_2}{\sqrt{I_\gamma+J_\gamma}}\right)+\Psi\left(\frac{ \zeta_2- \zeta_1}{\sqrt{I_\gamma+J_\gamma}}\right)\right\}.
\]
Moreover, formulae (3.462.5) and (3.322.2) from \cite{GR2007} imply that
\[
\int_{-\infty}^\infty uZ(u)\mathrm{d}u=\frac{1}{I_\gamma+J_\gamma}\left\{\frac{ \zeta_1+ \zeta_2}{\sqrt{I_\gamma+J_\gamma}}\Psi\left(\frac{ \zeta_1+ \zeta_2}{\sqrt{I_\gamma+J_\gamma}}\right)-\frac{ \zeta_2- \zeta_1}{\sqrt{I_\gamma+J_\gamma}}\Psi\left(\frac{ \zeta_2- \zeta_1}{\sqrt{I_\gamma+J_\gamma}}\right)\right\}.
\]
Since the distribution of the vector $(( \zeta_1+ \zeta_2)/\sqrt{I_\gamma+J_\gamma},( \zeta_2- \zeta_1)/\sqrt{I_\gamma+J_\gamma})$ has the density $\psi_R(x,y)$, we obtain \eqref{bayes:var}.

Finally, we prove the latter statement. Define the functions $f$ and $g$ on $(-1,1)$ by
\[
f(r)=1-\frac{1}{\pi}\arctan\left(\sqrt{\frac{1+r}{1-r}}\right)+\frac{\sqrt{1-r^2}}{2\pi},\qquad
g(r)=\int_{-\infty}^\infty\int_{-\infty}^\infty\left(\frac{x\Psi(x)-y\Psi(y)}{\Psi(x)+\Psi(y)}\right)^2\psi_{r}(x,y)\mathrm{d}x\mathrm{d}y.
\]
Then we have $E[\hat{u}^2]=f(R)/(I_\gamma+J_\gamma)$ and $E[\tilde{u}^2]=g(R)/(I_\gamma+J_\gamma)$. Since $R\to1$ as $|\rho|\to1$ if $\gamma=0$ and $\lim_{r\to1}f(r)=\frac{1}{2}$, it suffices to prove $\lim_{r\to1}g(r)=0$. Because we have
\[
g(r)=\int_{-\infty}^\infty\int_{-\infty}^\infty\left(\frac{x\Psi(x)-(rx+\sqrt{1-r^2}y)\Psi(rx+\sqrt{1-r^2}y)}{\Psi(x)+\Psi(rx+\sqrt{1-r^2}y)}\right)^2\phi_{1}(x)\phi_1(y)\mathrm{d}x\mathrm{d}y,
\]
the dominated convergence theorem yields $\lim_{r\to1}g(r)=0$, which completes the proof.
\end{proof}

\section{Appendix: Proof of Proposition \ref{prop:main}}\label{section:technical}

Before starting the proof, we introduce some notation. We set
\[
\xi_i:=\xi_{i,n}:=\frac{\pi}{2n+1}\left(i-\frac{1}{2}\right),\qquad i=1,2,\dots.
\]
Then we define the $n\times n$ matrix $U_n=(u_{ij})_{1\leq i,j\leq n}$ by
\[ 
u_{ij}:=\frac{2}{\sqrt{2n+1}}\cos\left[\frac{2\pi}{2n+1}\left(i-\frac{1}{2}\right)\left(j-\frac{1}{2}\right)\right]=\frac{2}{\sqrt{2n+1}}\cos\left[\xi_i(2j-1)\right],
\]
which is often referred to as the Discrete Cosine Transform (DCT) of type-VIII (see \cite{SSH2014} and references therein). Note that $U_n^\top=U_n$ and $U_n$ is real orthogonal. It is known that $U_n$ diagonalizes $F_n$ as follows:
\begin{equation}\label{eq:dct}
U_nF_n U_n
=\diag(\lambda_1,\dots,\lambda_n),\qquad
\text{where }
\lambda_i:=2\left[1-\cos\left(2\xi_i\right)\right].
\end{equation}
See Lemma 1 of \cite{KS2013} or Lemma C.2 of \cite{SSH2014} for the proof. 

For each $a>0$ we define the functions $f_a$ and $g_a$ on $\mathbb{R}$ by
\[
f_a(x)=\frac{a}{n}+2v_n(1-\cos(x)),\qquad
g_a(x)=\frac{\sin(x)}{f_a(x)}\qquad
(x\in\mathbb{R}).
\]
We also set $G_n(a)=\frac{a}{n}E_n+v_n F_n$. From \eqref{eq:dct} we have
\begin{equation}\label{eq:dct2}
U_nG_n(a)U_n=\Lambda_n(a):=\diag(f_a(2\xi_1),\dots,f_a(2\xi_n)).
\end{equation}

%\if0
\begin{rmk}\label{fisher}
It turns out that the off-diagonal components of $U_nT_nU_n$ play a dominant role to calculate the limit of $\|\bar{G}_n^{-\frac{1}{2}}\bar{T}_n\bar{G}_n^{-\frac{1}{2}}\|_F$. This is essentially different from the case of calculating the Fisher information for the scale parameter estimation from observations of the form \eqref{GJmodel}, where the similarity transformations of Toeplitz matrices by $U_n$ are sufficiently approximated by diagonal matrices as manifested by Lemma C.4 of \cite{SSH2014}. For this reason we need rather specific calculations as seen in Lemmas \ref{lemma:BS} and \ref{lemma:lemmaT}. 
\end{rmk}
%\fi

For a square matrix $A$, $\spr(A)$ denotes the spectral radius of $A$. We will frequently use the identity $\|A\|_\mathrm{sp}=\spr(A)$ holding if $A$ is a normal matrix.

Now we start the main body of the proof. We will frequently use the following well-known inequality for the sine function,
\begin{equation}\label{jordan}
\frac{2}{\pi}x\leq\sin(x)\leq1\qquad\left(0\leq x\leq\frac{\pi}{2}\right).
\end{equation}

%HJ1985, Theorem 5.6.36(c): $\|A^\top\|_\mathrm{sp}=\|A\|_\mathrm{sp}$

\begin{lem}\label{lemma:calculus}
For any $a>0$, we have
\[
\sup_{0\leq x\leq\pi}g_a(x)=\frac{1}{\sqrt{(\frac{a}{n})^2+4\frac{a}{n}v_n}},\qquad
\sup_{0\leq x\leq\pi}|g'_a(x)|\leq\frac{3n}{a}.
\]
\end{lem}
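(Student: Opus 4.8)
The plan is to compute both quantities by elementary single-variable calculus, exploiting the explicit form of $f_a$ and $g_a$. For the first identity, I would study $g_a(x)=\sin(x)/f_a(x)$ on $[0,\pi]$ directly. Writing $f_a(x)=\frac{a}{n}+2v_n(1-\cos x)$, the derivative $g_a'(x)$ has the same sign as its numerator $\cos(x)f_a(x)-\sin(x)\cdot 2v_n\sin(x)$. Substituting $f_a$ and using $\sin^2 x=1-\cos^2 x$, this numerator simplifies to a polynomial in $\cos x$: explicitly something of the form $(\frac{a}{n}+2v_n)\cos x-2v_n$, which is affine in $\cos x$. Hence $g_a'(x)=0$ forces $\cos x=\frac{2v_n}{a/n+2v_n}$, a single interior critical point, and since $g_a(0)=g_a(\pi)=0$ while $g_a>0$ on $(0,\pi)$ this critical point is the maximizer. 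Evaluating $g_a$ there — compute $\sin x=\sqrt{1-\cos^2 x}$ and $f_a$ at this value — should collapse, after clearing the common factor, to $1/\sqrt{(\frac{a}{n})^2+4\frac{a}{n}v_n}$. This first part is routine; the only care needed is handling the boundary case $v_n=0$, where the critical point pushes to $\cos x=0$, i.e.\ $x=\pi/2$, consistently giving $g_a(\pi/2)=n/a$, matching the formula since the square root becomes $\sqrt{(a/n)^2}=a/n$.

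For the derivative bound, I would differentiate $g_a=\sin(x)/f_a(x)$ via the quotient rule to get
\[
g_a'(x)=\frac{\cos(x)f_a(x)-2v_n\sin^2(x)}{f_a(x)^2},
\]
and then bound numerator and denominator separately rather than chase the exact maximum. The key observation is the uniform lower bound $f_a(x)\ge a/n$ (since $1-\cos x\ge 0$ and $v_n\ge 0$), which controls the denominator from below. For the numerator, I would bound $|\cos(x)f_a(x)-2v_n\sin^2(x)|$ using $|\cos x|\le 1$, $|\sin x|\le 1$, and the structure of $f_a$. The plan is to show this numerator is dominated by a constant multiple of $f_a(x)$ or of $(a/n)\cdot$(something), so that after dividing by $f_a(x)^2\ge (a/n)^2$ one lands on the clean bound $3n/a$. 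Concretely, writing the numerator as $\cos(x)\cdot\frac{a}{n}+2v_n[\cos(x)(1-\cos x)-\sin^2 x]$ and simplifying the bracket to $2v_n[\cos x-1]=-2v_n(1-\cos x)$ — wait, one recomputes $\cos x(1-\cos x)-(1-\cos^2 x)=(1-\cos x)(\cos x-(1+\cos x))=-(1-\cos x)$ — so the numerator equals $\frac{a}{n}\cos x-2v_n(1-\cos x)$, whose absolute value is at most $\frac{a}{n}+2v_n(1-\cos x)=f_a(x)$. Dividing by $f_a(x)^2$ then gives $|g_a'(x)|\le 1/f_a(x)\le n/a$, which is even sharper than claimed; the factor $3$ provides ample slack, so I would simply cite this bound.

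The main obstacle, such as it is, will be bookkeeping rather than conceptual difficulty: keeping the algebraic simplification of the numerator honest (the cancellation of $\sin^2 x$ against the cross terms is where a sign slip would be costly) and correctly treating the degenerate endpoint $v_n=0$ in the first identity so that the supremum formula remains valid. Since the factor $3$ in the derivative bound is generous, I expect the cruder estimate $|g_a'|\le 1/f_a\le n/a$ to suffice, making that half of the lemma essentially immediate once the quotient-rule simplification is carried out. I would therefore present the critical-point computation for the supremum in full and dispatch the derivative bound with the one-line estimate just described.
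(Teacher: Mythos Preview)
Your approach is correct and essentially the same as the paper's: both compute $g_a'(x)=\frac{(\frac{a}{n}+2v_n)\cos x-2v_n}{f_a(x)^2}$, locate the unique critical point $\cos x=\frac{2v_n}{a/n+2v_n}$ to obtain the supremum, and bound $|g_a'|$ directly from this expression. Your estimate $|g_a'|\le 1/f_a\le n/a$ is in fact sharper than the paper's, which uses the rewriting $g_a'(x)=\frac{(a/n)(1+\cos x)}{f_a(x)^2}-\frac{1}{f_a(x)}$ and the cruder triangle inequality to land on $3n/a$.
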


\begin{proof}
The claim immediately follows from the identity 
$g'_a(x)=\frac{(\frac{a}{n}+2v_n)\cos(x)-2v_n}{f_a(x)^2}=\frac{\frac{a}{n}(1+\cos(x))}{f_a(x)^2}-\frac{1}{f_a(x)}.$
\end{proof}

\begin{lem}\label{smallnoise}
Let $\psi:[0,\pi]\to\mathbb{R}$ be continuous. Also, let $m_n$ be a sequence of positive integers such that $m_n\leq n$ and $m_n/n\to c\in(0,1]$ as $n\to\infty$. Then, for any $p>0$ we have 
\begin{align*}
\lim_{n\to\infty}\frac{1}{n^{p+1}}\sum_{i=1}^{m_n}\frac{\psi(2\xi_i)}{f_a(2\xi_i)^p}
=\frac{1}{a^p\pi}\int_0^{\pi c} \psi(x)\mathrm{d}x
\end{align*}
provided that $\gamma=0$.
\end{lem}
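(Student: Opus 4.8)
The plan is to exploit the hypothesis $\gamma=0$, i.e.\ $nv_n\to0$, which renders the noise contribution to $f_a$ asymptotically negligible once the correct power of $n$ is factored out, thereby reducing the whole statement to an elementary Riemann-sum limit. First I would pull $n^p$ inside the summand, writing
\[
\frac{1}{n^{p+1}}\sum_{i=1}^{m_n}\frac{\psi(2\xi_i)}{f_a(2\xi_i)^p}
=\frac1n\sum_{i=1}^{m_n}\frac{\psi(2\xi_i)}{\left(n f_a(2\xi_i)\right)^p}.
\]
Since $nf_a(2\xi_i)=a+2nv_n\bigl(1-\cos(2\xi_i)\bigr)$ and $0\le 1-\cos(2\xi_i)\le2$, the estimate $\bigl|nf_a(2\xi_i)-a\bigr|\le 4nv_n\to0$ holds uniformly in $i$. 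Hence $\bigl(nf_a(2\xi_i)\bigr)^{-p}\to a^{-p}$ uniformly in $1\le i\le m_n$, and it is convenient to record this as $\bigl(nf_a(2\xi_i)\bigr)^{-p}=a^{-p}+\delta_{n,i}$ with $\sup_{1\le i\le m_n}|\delta_{n,i}|\to0$.

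Substituting this decomposition splits the sum into a main term $a^{-p}\,\frac1n\sum_{i=1}^{m_n}\psi(2\xi_i)$ and a remainder $\frac1n\sum_{i=1}^{m_n}\psi(2\xi_i)\delta_{n,i}$. The remainder is bounded by $\|\psi\|_\infty\,(m_n/n)\,\sup_i|\delta_{n,i}|$, which vanishes because $\psi$ is bounded (being continuous on the compact interval $[0,\pi]$), $m_n/n\to c$ is bounded, and $\sup_i|\delta_{n,i}|\to0$. So everything reduces to identifying the limit of the pure Riemann sum $\frac1n\sum_{i=1}^{m_n}\psi(2\xi_i)$.

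For that step I would recall that the nodes $2\xi_i=\frac{2\pi}{2n+1}\bigl(i-\tfrac12\bigr)$ are equally spaced with gap $\frac{2\pi}{2n+1}$, so that
\[
\frac1n\sum_{i=1}^{m_n}\psi(2\xi_i)
=\frac{2n+1}{2n}\cdot\frac1\pi\sum_{i=1}^{m_n}\psi(2\xi_i)\,\frac{2\pi}{2n+1}.
\]
The prefactor $\frac{2n+1}{2n}\to1$, and the sum on the right is a Riemann sum for $\int_0^{\pi c}\psi(x)\,dx$: the mesh $\frac{2\pi}{2n+1}\to0$, while the top node $2\xi_{m_n}=\frac{2\pi}{2n+1}\bigl(m_n-\tfrac12\bigr)\to\pi c$ because $m_n/n\to c$. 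Uniform continuity of $\psi$ then delivers convergence to $\frac1\pi\int_0^{\pi c}\psi(x)\,dx$, and combining with the main-term constant $a^{-p}$ gives the asserted limit.

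The one point demanding care — the main obstacle — is the bookkeeping of the Riemann sum near its upper endpoint: the top node $2\xi_{m_n}$ only approaches $\pi c$ in the limit, so one must check that the gap between $\int_0^{2\xi_{m_n}}\psi$ and $\int_0^{\pi c}\psi$ disappears (it does, being at most $\|\psi\|_\infty\,|2\xi_{m_n}-\pi c|\to0$) and that the usual Riemann-sum error bound applies uniformly even though the index range grows with $n$. Everything else is a routine combination of the uniform estimate on $nf_a$ with the continuity of $\psi$.
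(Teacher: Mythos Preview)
Your proof is correct and follows essentially the same route as the paper: both show that replacing $f_a(2\xi_i)^{-p}$ by $(a/n)^{-p}$ introduces an error that is uniform in $i$ and vanishes because $nv_n\to0$, then reduce to a standard Riemann-sum approximation. The only cosmetic difference is that the paper bounds the replacement error via the fundamental theorem of calculus (writing $\bigl|(a/n)^{-p}-f_a(2\xi_i)^{-p}\bigr|\le p\|\psi\|_\infty\int_0^{v_n\lambda_i}(a/n+x)^{-p-1}\,dx$), while you invoke the uniform continuity of $t\mapsto t^{-p}$ near $t=a$; the substance is identical.
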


\begin{proof}
By the fundamental theorem of calculus, we have
\begin{align*}
\left|\frac{\psi(2\xi_i)}{f_a(2\xi_i)^p}-\frac{\psi(2\xi_i)}{(a/n)^p}\right|
\leq p\|\psi\|_\infty\left|\int_0^{v_n\lambda_i}\frac{1}{(a/n+x)^{p+1}}\mathrm{d}x\right|
\leq\frac{2p\|\psi\|_\infty v_n}{(a/n)^{p+1}}.
\end{align*}
Hence the desired result follows from the standard Riemann sum approximation.
\if0
\begin{align*}
\lim_{n\to\infty}\Delta_n^{p+1}\sum_{i=1}^n\frac{f(2\xi_i)}{(a\Delta_n+2v_n\lambda_i)^p}
=\lim_{n\to\infty}\Delta_n^{p+1}\sum_{i=1}^n\frac{f(2\xi_i)}{(a\Delta_n)^p}
\end{align*}
\fi
\end{proof}

% \section{Lemma: lambda.inv}

\begin{lem}\label{lambda.inv}
Let $m_n$ be a sequence of positive integers such that $m_n\leq n$ and $m_n/n\to c\in(0,1]$ as $n\to\infty$. Then
\begin{align}
&\lim_{n\to\infty}\frac{1}{nN_n}\sum_{i=1}^{m_n}\frac{1}{f_a(2\xi_i)}=
\left\{\begin{array}{ll}
\frac{2}{\pi\sqrt{a(a+4\gamma)}}\arctan\left(\sqrt{1+\frac{4\gamma}{a}}\tan\left(\frac{\pi c}{2}\right)\right)&\mathrm{if}~\gamma<\infty,\\
\frac{1}{2\sqrt{a}}&\mathrm{if}~\gamma=\infty,\\
\end{array}\right.\label{eq:lambda.inv1}\\
&\lim_{n\to\infty}r_n^2\sum_{i=1}^{m_n}g_{a}(2\xi_i)g_{b}(2\xi_i)=
\left\{\begin{array}{ll}
\frac{1}{2ab}\left(c-\frac{\sin 2\pi c}{2\pi}\right)&\mathrm{if}~\gamma=0,\\
\frac{\sqrt{b(b+4\gamma)}}{2\pi\gamma^2(b-a)}\arctan\left(\sqrt{1+\frac{4\gamma}{b}}\tan\left(\frac{\pi c}{2}\right)\right)&\\
-\frac{\sqrt{a(a+4\gamma)}}{2\pi\gamma^2(b-a)}\arctan\left(\sqrt{1+\frac{4\gamma}{a}}\tan\left(\frac{\pi c}{2}\right)\right)-\frac{c}{4\gamma^2}&\mathrm{if}~0<\gamma<\infty,\\
\frac{1}{2(\sqrt{a}+\sqrt{b})}&\mathrm{if}~\gamma=\infty.
\end{array}\right.\label{eq:lambda.inv2}
\end{align}
for any $a,b>0$ such that $a\neq b$.
\end{lem}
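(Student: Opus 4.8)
The plan is to read both limits as Riemann sums and to separate the regimes $\gamma<\infty$ and $\gamma=\infty$, which demand different rescalings. The common observation is that the nodes $2\xi_i=\frac{2\pi}{2n+1}(i-\frac12)$ are equally spaced with mesh $\frac{2\pi}{2n+1}$ and, since $m_n/n\to c$, fill the interval $[0,\pi c]$ as $i$ runs from $1$ to $m_n$. Hence $\frac{2}{2n+1}\sum_{i=1}^{m_n}h(2\xi_i)\to\frac1\pi\int_0^{\pi c}h(x)\,\mathrm{d}x$ for every continuous $h$, and in all the limits below $\frac1n$ may be replaced by $\frac{2}{2n+1}$ without changing the value.

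When $\gamma<\infty$ we have $N_n=n$, so the normalizations are $\frac{1}{nN_n}=n^{-2}$ and $r_n^2=n^{-3}$. The key simplification is the uniform (in $i$) convergence $nf_a(2\xi_i)=a+2nv_n(1-\cos 2\xi_i)\to\phi_a(x):=a+2\gamma(1-\cos x)$, which holds because $nv_n\to\gamma$ and $0\le 1-\cos\le 2$. Writing $n^{-2}\sum_i f_a(2\xi_i)^{-1}=\frac1n\sum_i(nf_a(2\xi_i))^{-1}$ and $n^{-3}\sum_i\frac{\sin^2(2\xi_i)}{f_a(2\xi_i)f_b(2\xi_i)}=\frac1n\sum_i\frac{\sin^2(2\xi_i)}{n^2 f_a(2\xi_i)f_b(2\xi_i)}$ and using $n^2 f_af_b\to\phi_a\phi_b$ uniformly, the two expressions converge to $\frac1\pi\int_0^{\pi c}\phi_a(x)^{-1}\,\mathrm{d}x$ and $\frac1\pi\int_0^{\pi c}\frac{\sin^2 x}{\phi_a(x)\phi_b(x)}\,\mathrm{d}x$. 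For the first integral I would apply the antiderivative $\int\frac{\mathrm{d}x}{A+B\cos x}=\frac{2}{\sqrt{A^2-B^2}}\arctan\!\left(\frac{(A-B)\tan(x/2)}{\sqrt{A^2-B^2}}\right)$ with $A=a+2\gamma$ and $B=-2\gamma$, so that $A^2-B^2=a(a+4\gamma)$ and $A-B=a+4\gamma$; this reproduces the stated $\arctan$ expression and, at $\gamma=0$, the degenerate value $c/a$. For the second I would combine the partial fraction $\frac{1}{\phi_a\phi_b}=\frac{1}{b-a}\big(\frac1{\phi_a}-\frac1{\phi_b}\big)$ with a polynomial-division identity expressing $\frac{\sin^2 x}{\phi_a}$ as $\frac{a+4\gamma}{4\gamma^2}-\frac{1-\cos x}{2\gamma}-\frac{a(a+4\gamma)}{4\gamma^2}\cdot\frac1{\phi_a}$; upon subtracting the $a$- and $b$-versions the $(1-\cos x)$ parts cancel and the constant parts combine into the linear-in-$c$ term $-\frac{c}{4\gamma^2}$, while the $\phi^{-1}$ parts, integrated as above, yield the two $\arctan$ terms of \eqref{eq:lambda.inv2}.

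When $\gamma=\infty$ we have $N_n=\sqrt{n/v_n}$ and $nv_n\to\infty$, and the mass of each sum concentrates in a boundary layer near $x=0$; this is why the limits no longer depend on $c$. I would fix $\epsilon>0$ and split each sum at $2\xi_i=\epsilon$. On the far range $2\xi_i\ge\epsilon$ one has $f_a(2\xi_i)\ge2v_n(1-\cos\epsilon)$, so using $\sin^2\le1$ and at most $n$ terms this part is $O((nv_n)^{-1/2})\to0$ in both \eqref{eq:lambda.inv1} and \eqref{eq:lambda.inv2}. On the near range I would substitute $u=x\sqrt{nv_n}$, under which the mesh becomes $\approx\pi/N_n$, while $f_a(x)\approx(a+u^2)/n$ and $\sin^2 x\approx u^2/(nv_n)$; the normalizations are tuned precisely so that the near sums converge to $\frac1\pi\int_0^\infty\frac{\mathrm{d}u}{a+u^2}=\frac{1}{2\sqrt a}$ and $\frac1\pi\int_0^\infty\frac{u^2\,\mathrm{d}u}{(a+u^2)(b+u^2)}=\frac{1}{2(\sqrt a+\sqrt b)}$, the latter via $\frac{u^2}{(u^2+a)(u^2+b)}=\frac{1}{a-b}\big(\frac{a}{u^2+a}-\frac{b}{u^2+b}\big)$ and $\int_0^\infty\frac{\mathrm{d}u}{u^2+a}=\frac{\pi}{2\sqrt a}$.

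The $\gamma=\infty$ case is where the real work lies. After rescaling, the upper endpoint $\epsilon\sqrt{nv_n}$ of the near integral tends to infinity, whereas the linearizations $\sin^2 x\sim x^2$ and $1-\cos x\sim x^2/2$ are only accurate for small $x$; one must therefore control the linearization error uniformly on $[0,\epsilon]$ (using \eqref{jordan} and Lemma \ref{lemma:calculus}-type bounds), show that the discretization error of the rescaled Riemann sum vanishes as $N_n\to\infty$, and only then let $\epsilon\downarrow0$ so that the near integral exhausts $[0,\infty)$ while the far remainder stays negligible. Justifying this interchange of limits, together with the convergence of the improper integrals over $[0,\infty)$, is the main obstacle. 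By contrast, the $\gamma=0$ sub-case is routine and can be settled directly by Lemma \ref{smallnoise} (with $\psi\equiv1$, $p=1$ for \eqref{eq:lambda.inv1}) and an immediate variant of it for the product $f_af_b$ (with $\psi=\sin^2$, $p=2$ for \eqref{eq:lambda.inv2}), both of which already package the needed Riemann-sum estimate.
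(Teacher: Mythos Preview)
Your plan is correct, and for $\gamma<\infty$ it is essentially the paper's argument recast in the language of uniform convergence of integrands (the paper uses Darboux bounds instead, but the content is the same, and the polynomial-division identity you write for $\sin^2 x/\phi_a$ is exactly the paper's identity specialized to the limit).

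The genuine difference is in the $\gamma=\infty$ regime. You treat it by a boundary-layer rescaling $u=x\sqrt{nv_n}$ and an $\epsilon$-cutoff, and you rightly flag the interchange of limits as the main obstacle. The paper avoids this obstacle entirely: it bounds $\sum_i f_a(2\xi_i)^{-1}$ by Darboux sums of $\int f_a(x)^{-1}\,\mathrm{d}x$ and then evaluates this integral \emph{exactly with $n$ still present}, obtaining
\[
\int_0^y\frac{\mathrm{d}x}{f_a(x)}=\frac{2n}{\sqrt{a(a+4nv_n)}}\arctan\!\left(\sqrt{1+\tfrac{4nv_n}{a}}\tan\tfrac{y}{2}\right).
\]
Taking the limit of this closed form handles $\gamma<\infty$ and $\gamma=\infty$ simultaneously (in the latter case the $\arctan$ tends to $\pi/2$ and the prefactor divided by $nN_n$ tends to $1/(\pi\sqrt a)$). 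For \eqref{eq:lambda.inv2} the paper applies the same polynomial-division identity to $f_a$ (not $\phi_a$), reducing $g_ag_b$ to a combination of $f_a^{-1}$, $f_b^{-1}$, and a constant; the $\gamma\neq0$ cases (including $\gamma=\infty$) then follow directly from \eqref{eq:lambda.inv1}, with the constant term contributing $-c/(4\gamma^2)$ when $0<\gamma<\infty$ and vanishing when $\gamma=\infty$. So the paper's route is shorter precisely because it postpones the passage to the limit until after the integral has been computed in closed form; your route is longer but perfectly sound.
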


\begin{proof}
First, using the lower and upper Darboux sums of the integral $\int_{2\xi_1}^{2\xi_{m_n}}\frac{1}{f_a(x)}\mathrm{d}x$, we obtain
\begin{align*}
\frac{2n+1}{2\pi}\int_{2\xi_1}^{2\xi_{m_n}}\frac{1}{f_a(x)}\mathrm{d}x+\frac{1}{f_a(2\xi_{m_n})}
\leq\sum_{i=1}^{m_n}\frac{1}{f_a(2\xi_i)}
\leq\frac{1}{f_a(2\xi_1)}+\frac{2n+1}{2\pi}\int_{2\xi_1}^{2\xi_{m_n}}\frac{1}{f_a(x)}\mathrm{d}x.
\end{align*}
Now, formula (2.553.3) in \cite{GR2007} yields 
\[
\int_0^y\frac{1}{f_a(x)}\mathrm{d}x
%=\frac{2}{\sqrt{(a/n)(a/n+4v_n)}}\arctan\left(\sqrt{1+\frac{4v_n}{a/n}}\tan\left(\frac{y}{2}\right)\right)
=\frac{2n}{\sqrt{a(a+4nv_n)}}\arctan\left(\sqrt{1+\frac{4nv_n}{a}}\tan\left(\frac{y}{2}\right)\right),
\]
hence we obtain \eqref{eq:lambda.inv1}. Next, a simple calculation yields
\begin{align*}
g_{a}(x)g_{b}(x)
&=\frac{n}{b-a}\left\{\frac{\sin^2 x}{f_a(x)}-\frac{\sin^2 x}{f_b(x)}\right\}.
%&=\frac{n}{b-a}\left\{\frac{(b\Delta_n/2v_n)^2+b\Delta_n/v_n}{b\Delta_n+2v_n(1-\cos(x))}-\frac{(a\Delta_n/2v_n)^2+a\Delta_n/v_n}{a\Delta_n+2v_n(1-\cos(x))}\right\}-\frac{1}{2v_n}
\end{align*}
Therefore, if $\gamma=0$, Lemma \ref{smallnoise} implies that
\begin{align*}
\lim_{n\to\infty}r_n^2\sum_{i=1}^{m_n}g_{a}(2\xi_i)g_{b}(2\xi_i)
=\frac{1}{\pi ab}\int_0^{\pi c}\sin^2x\mathrm{d}x
=\frac{1}{2ab}\left(c-\frac{\sin 2\pi c}{2\pi}\right).
\end{align*}
On the other hand, if $\gamma\neq0$, for sufficiently large $n$ we have $1-\cos(x)=(f_a(x)-a/n)/(2v_n)$, hence
\begin{align*}
\sin^2x%&=-(1-\cos(x))(1-\cos(x)-2)=2(1-\cos(x))-(1-\cos(x))^2\\
&=f_a(x)/v_n-a/(nv_n)-f_a(x)^2/(4v_n^2)+f_a(x)a/(2nv_n^2)-a^2/(2nv_n)^2.
\end{align*}
Therefore, we obtain
\begin{align*}
\frac{\sin^2 x}{f_a(x)}-\frac{\sin^2 x}{f_b(x)}
%&=\left\{\frac{b}{nv_n}+\frac{b^2}{(2nv_n)^2}\right\}\frac{1}{f_b(x)}-\left\{\frac{a}{nv_n}+\frac{a^2}{(2nv_n)^2}\right\}\frac{1}{f_a(x)}+\frac{f_b(x)-f_a(x)}{4v_n^2}+\frac{a-b}{2nv_n^2}\\
&=\left\{\frac{b}{nv_n}+\frac{b^2}{(2nv_n)^2}\right\}\frac{1}{f_b(x)}-\left\{\frac{a}{nv_n}+\frac{a^2}{(2nv_n)^2}\right\}\frac{1}{f_a(x)}-\frac{b-a}{4nv_n^2}.
\end{align*}
Hence the desired result follows from \eqref{eq:lambda.inv1}.
\if0
\begin{align*}
f_{a\Delta_n}(2\xi_i)f_{b\Delta_n}(2\xi_i)
&=\frac{n}{b-a}\left\{\left(\left(b\frac{\Delta_n}{2v_n}\right)^2+b\frac{\Delta_n}{v_n}\right)\frac{1}{\lambda_i(b)}-\left(\left(a\frac{\Delta_n}{2v_n}\right)^2+a\frac{\Delta_n}{v_n}\right)\frac{1}{\lambda_i(a)}\right\}-\frac{1}{2v_n}
\end{align*}
\fi
\end{proof}

% \section{Lemma BS}

\begin{lem}\label{lemma:BS}
Let $m_n$ be a sequence of positive integers such that $m_n\leq n$ and $m_n\to\infty$ as $n\to\infty$. Then
\[
\lim_{n\to\infty}\left(\frac{4}{2n+1}\right)^2\sum_{i=1}^{m_n}\frac{\sin^4(n\cdot 2\xi_1i)}{\sin^2(2\xi_1i)}=2.
\]
\end{lem}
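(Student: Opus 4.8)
The plan is to reduce the sum to an explicitly summable series by exploiting the arithmetic of the argument $n\cdot 2\xi_1 i$. Since $2\xi_1=\pi/(2n+1)$, I would write $\theta_i:=2\xi_1 i=\pi i/(2n+1)$ and $x_i:=\theta_i/2=\pi i/\{2(2n+1)\}$, and record the exact identity $n\theta_i=\tfrac{\pi i}{2}-x_i$. Feeding this into the angle-addition formula separates the two parities of $i$: for even $i$ one finds $\sin(n\theta_i)=\pm\sin x_i$, while for odd $i$ one finds $\sin(n\theta_i)=\pm\cos x_i$. Combining with the double-angle identity $\sin^2\theta_i=4\sin^2 x_i\cos^2 x_i$ gives the key reduction
\[
\frac{\sin^4(n\theta_i)}{\sin^2\theta_i}=
\begin{cases}
\tfrac14\tan^2 x_i & \text{if } i \text{ is even},\\
\tfrac14\cot^2 x_i & \text{if } i \text{ is odd}.
\end{cases}
\]
Multiplying by $\bigl(4/(2n+1)\bigr)^2$ and absorbing the $\tfrac14$, the quantity to analyze becomes $\tfrac{4}{(2n+1)^2}$ times a sum of $\tan^2 x_i$ over even $i\le m_n$ plus a sum of $\cot^2 x_i$ over odd $i\le m_n$.

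Next I would dispose of the even indices. Because $m_n\le n$, every $x_i$ with $i\le m_n$ satisfies $x_i\le \pi n/\{2(2n+1)\}<\pi/4$, so $\tan^2 x_i<1$; as there are at most $n$ even indices, their total contribution is at most $4n/(2n+1)^2=O(1/n)$, which vanishes. The substance is therefore carried by the odd indices, which is where the value $2$ comes from. Reindexing $i=2k-1$, I would set $a_{n,k}=\tfrac{4}{(2n+1)^2}\cot^2 x_{2k-1}$ when $2k-1\le m_n$ and $a_{n,k}=0$ otherwise. For each fixed $k$, as $n\to\infty$ one has $x_{2k-1}\to0$ and $\cot^2 x_{2k-1}=\cos^2 x_{2k-1}/\sin^2 x_{2k-1}\sim x_{2k-1}^{-2}=4(2n+1)^2/\{\pi^2(2k-1)^2\}$, whence $a_{n,k}\to 16/\{\pi^2(2k-1)^2\}$.

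To pass the limit inside the summation I would invoke Jordan's inequality \eqref{jordan}: from $\sin x_i\ge \tfrac{2}{\pi}x_i$ on $[0,\pi/2]$ we get $\cot^2 x_i\le \sin^{-2}x_i\le \pi^2/(4x_i^2)$, so $0\le a_{n,k}\le 4/(2k-1)^2$, a bound independent of $n$ and summable in $k$. By dominated convergence for series (Tannery's theorem),
\[
\lim_{n\to\infty}\sum_{k\ge1}a_{n,k}=\sum_{k=1}^\infty\frac{16}{\pi^2(2k-1)^2}=\frac{16}{\pi^2}\cdot\frac{\pi^2}{8}=2,
\]
using the classical value $\sum_{k\ge1}(2k-1)^{-2}=\pi^2/8$. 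Adding the negligible even part yields the claimed limit $2$.

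The only genuinely delicate point is the trigonometric reduction in the first step: correctly tracking the parity-dependent collapse of $\sin(n\theta_i)$ to $\pm\sin x_i$ or $\pm\cos x_i$ and hence to the $\tan^2/\cot^2$ dichotomy. After that, the two estimates are routine—the even part is a crude $O(1/n)$ bound, and the odd part reduces to justifying the interchange of limit and sum, for which the uniform dominating bound $4/(2k-1)^2$ furnished by \eqref{jordan} is exactly what is needed.
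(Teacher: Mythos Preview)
Your proof is correct and follows the paper's route: the same parity-based trigonometric reduction to $\tfrac14\tan^2(\xi_1 i)$ for even $i$ and $\tfrac14\cot^2(\xi_1 i)$ for odd $i$, followed by the same even/odd split. Your execution of each half is in fact a little more economical than the paper's: for the even part you use the trivial bound $\tan^2 x_i<1$ (since $x_i<\pi/4$) to get $O(1/n)$ directly, whereas the paper compares with the integral $\int_0^{\pi/4}\tan^2$; for the odd part you need only Jordan's inequality \eqref{jordan} to produce the summable dominating bound $4/(2k-1)^2$ and then invoke Tannery's theorem, while the paper squeezes via the two-sided Becker--Stark inequality for $\tan$.
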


\begin{proof}
Since $4n\xi_1=\pi -2\xi_1$, we have
\begin{align*}
\sin^2\left(n\cdot 2\xi_1l\right)=\frac{1}{2}\left\{1-\cos\left(4n\xi_1l\right)\right\}
=\frac{1}{2}\left\{1-(-1)^l\cos\left(2\xi_1l\right)\right\}
=\left\{
\begin{array}{ll}
\sin^2\left(\xi_1l\right)&\textrm{if $l$ is even},\\
\cos^2\left(\xi_1l\right)&\textrm{if $l$ is odd}.
\end{array}\right.
\end{align*}
Therefore, using the formula $\sin(2\xi_1i)=2\sin(\xi_1 i)\cos(\xi_1 i)$, we can decompose the target quantity as
\begin{align*}
\left(\frac{4}{2n+1}\right)^2\sum_{i=1}^{m_n}\frac{\sin^4(n\cdot 2\xi_1i)}{\sin^2(2\xi_1i)}
=\frac{4}{(2n+1)^2}\left\{\sum_{\begin{subarray}{c}
i=1\\
i:\textrm{ even}
\end{subarray}}^{m_n}\tan^2(\xi_1i)
+\sum_{\begin{subarray}{c}
i=1\\
i:\textrm{ odd}
\end{subarray}}^{m_n}\cot^2(\xi_1i)
\right\}
=:\mathbf{A}_{1,n}+\mathbf{A}_{2,n}.
\end{align*}

First we prove $\lim_n\mathbf{A}_{1,n}=0$. Using the monotonicity of the tangent function and assumption $m_n\leq n$, we obtain
\begin{align*}
\mathbf{A}_{1,n}\leq\frac{8}{(2n+1)\pi}\int_0^{\frac{\pi}{2}\frac{n+1}{2n+1}}\tan^2(x)\mathrm{d}x
\end{align*}
Since formula (2.526.22) in \cite{GR2007} yields $\lim_n\int_0^{\frac{\pi}{2}\frac{n+1}{2n+1}}\tan^2(x)\mathrm{d}x=1-\pi/4$, we conclude that $\lim_n\mathbf{A}_{1,n}=0$.

Next we prove $\lim_n\mathbf{A}_{2,n}=2$. Our proof relies on the following inequality for the tangent function:
\begin{equation}\label{BSinequality}
\frac{\pi}{2}x\leq\tan\left(\frac{\pi}{2}x\right)\leq\frac{\pi}{2}\frac{x}{1-x^2}\qquad
(0\leq x<1).
\end{equation}
The lower estimate of \eqref{BSinequality} is well-known, and the upper estimate is known as the Becker-Stark inequality (Eq.(2) of \cite{BS1978}). Now, using \eqref{BSinequality}, we obtain
\begin{align*}
\frac{16}{\pi^2}\sum_{i=1}^{\lfloor \frac{m_n+1}{2}\rfloor}\left\{\frac{1}{(2i-1)^2}-\frac{2}{(2n+1)^2}+\frac{(2i-1)^2}{(2n+1)^4}\right\}\leq 
\mathbf{A}_{2,n}\leq\frac{16}{\pi^2}\sum_{i=1}^{\lfloor \frac{m_n+1}{2}\rfloor}\frac{1}{(2i-1)^2}.
\end{align*}
Therefore, using formula $\sum_{i=1}^\infty(2i-1)^{-2}=\frac{\pi^2}{8}$, we conclude that $\lim_n\mathbf{A}_{2,n}=2$.
\if0
The Becker-Stark inequality: $\tan\frac{\pi}{2}x\leq\frac{\pi}{2}x/(1-x^2)$ for $x\in[0,1]$
\begin{align*}
\sum_{\begin{subarray}{c}
i=1\\
i:\textrm{ odd}
\end{subarray}}^m\cot^2(\xi_1i)
\geq\left(\frac{2}{\pi}\right)^2\sum_{i=1}^{\lfloor \frac{m+1}{2}\rfloor}\left(\frac{1}{(\frac{2i-1}{2n+1})^2}-\left(\frac{2i-1}{2n+1}\right)^2\right)^2
\end{align*}

\begin{align*}
\left(\frac{4}{2n+1}\right)^2\sum_{i=1}^{\lfloor \frac{m+1}{2}\rfloor}\frac{1}{(\xi_1(2i-1))^2}
=\frac{4^3}{\pi^2}\sum_{i=1}^{\lfloor \frac{m+1}{2}\rfloor}\frac{1}{(2i-1)^2}
\to8
\end{align*}
\fi
\end{proof}

\begin{lem}\label{lemma:normR}
For any $a,b>0$, it holds that $\|G_n(a)^{-\frac{1}{2}}R_nG_n(b)^{-\frac{1}{2}}\|_\mathrm{sp}=O(N_n)$ and $\|G_n(a)^{-\frac{1}{2}}R_nG_n(b)^{-\frac{1}{2}}\|_F^2=O(N_n^{2})$ as $n\to\infty$.
\if0
\begin{enumerate}[nosep,label={\normalfont(\alph*)}]

\item $\|G(a)^{-\frac{1}{2}}RG(a)^{-\frac{1}{2}}\|_\mathrm{sp}=O(r_n^{-1})$,

\item $\|G(a)^{-\frac{1}{2}}RG(b)^{-\frac{1}{2}}\|_F^2=O(r_n^{-2})$.

\end{enumerate}
\fi
\end{lem}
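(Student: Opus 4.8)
The plan is to exploit the fact that $R_n$ is a rank-one matrix. Writing $e_1=(1,0,\dots,0)^\top\in\mathbb{R}^n$ for the first canonical basis vector, we have $R_n=e_1e_1^\top$, so that
\[
G_n(a)^{-\frac{1}{2}}R_nG_n(b)^{-\frac{1}{2}}=\left(G_n(a)^{-\frac{1}{2}}e_1\right)\left(G_n(b)^{-\frac{1}{2}}e_1\right)^\top
\]
is again of rank one (note that $G_n(a)$ is positive definite for $a>0$, so the inverse square root is well defined). For any rank-one matrix $uw^\top$ one has $\|uw^\top\|_\mathrm{sp}=\|uw^\top\|_F=\|u\|\,\|w\|$, so both assertions of the lemma reduce to controlling the single product $\|G_n(a)^{-\frac{1}{2}}e_1\|\cdot\|G_n(b)^{-\frac{1}{2}}e_1\|$; in fact it suffices to establish $\|G_n(a)^{-\frac{1}{2}}e_1\|^2=O(N_n)$ for each fixed $a>0$.

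Next I would compute this scalar explicitly via the DCT diagonalization. Since $U_n$ is symmetric and orthogonal, \eqref{eq:dct2} gives $G_n(a)^{-1}=U_n\Lambda_n(a)^{-1}U_n$, whence
\[
\|G_n(a)^{-\frac{1}{2}}e_1\|^2=e_1^\top G_n(a)^{-1}e_1=\sum_{i=1}^n\frac{u_{1i}^2}{f_a(2\xi_i)}=\frac{4}{2n+1}\sum_{i=1}^n\frac{\cos^2(\xi_1(2i-1))}{f_a(2\xi_i)},
\]
using the explicit entries $u_{1i}=\frac{2}{\sqrt{2n+1}}\cos(\xi_1(2i-1))$ of the first row of $U_n$.

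Finally I would bound the sum crudely. Discarding the cosine factor by $\cos^2\le1$ yields
\[
\|G_n(a)^{-\frac{1}{2}}e_1\|^2\le\frac{4}{2n+1}\sum_{i=1}^n\frac{1}{f_a(2\xi_i)},
\]
and Lemma \ref{lambda.inv}, applied with $m_n=n$ and $c=1$, shows that $\frac{1}{nN_n}\sum_{i=1}^n f_a(2\xi_i)^{-1}$ converges to a finite positive constant, so the right-hand side is $O(N_n)$. Applying this to both $a$ and $b$ and recombining through the rank-one identity yields $\|G_n(a)^{-\frac{1}{2}}R_nG_n(b)^{-\frac{1}{2}}\|_\mathrm{sp}=\|G_n(a)^{-\frac{1}{2}}R_nG_n(b)^{-\frac{1}{2}}\|_F=O(N_n)$, which is exactly the claim (the Frobenius statement being the square of this).

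There is no genuine obstacle in this argument; the proof is essentially a bookkeeping exercise once the rank-one structure of $R_n$ is recognized. The only point requiring a moment of care is checking that the limiting constant furnished by Lemma \ref{lambda.inv} remains finite at the endpoint $c=1$, where $\tan(\pi c/2)$ diverges; there the $\arctan$ saturates at $\pi/2$, giving the finite value $1/\sqrt{a(a+4\gamma)}$ when $\gamma<\infty$ (resp.\ $1/(2\sqrt a)$ when $\gamma=\infty$), which is all that the $O(N_n)$ bound needs.
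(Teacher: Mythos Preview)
Your proof is correct, and it is in fact cleaner than the paper's own. The paper does not exploit the rank-one structure $R_n=e_1e_1^\top$; instead it treats the two norms separately. For the spectral norm the paper rewrites $\|G_n(a)^{-1/2}R_nG_n(b)^{-1/2}\|_\mathrm{sp}$ as a spectral radius, conjugates by $U_n$, and then bounds the resulting matrix entrywise via Theorem~5.6.9 of Horn--Johnson to obtain $\frac{4}{2n+1}\sum_k\{f_a(2\xi_k)^{-1}+f_b(2\xi_k)^{-1}\}$. For the Frobenius norm it expands $\trace(G_n(a)^{-1}R_nG_n(b)^{-1}R_n)$ directly in the DCT basis. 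Both routes then appeal to Lemma~\ref{lambda.inv} exactly as you do. Your observation that a rank-one matrix has equal spectral and Frobenius norms collapses the two computations into the single scalar $e_1^\top G_n(a)^{-1}e_1$, which is shorter and avoids the external reference to Horn--Johnson; the paper's approach buys nothing extra here.
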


\begin{proof}
First, by definition we have $\|G_n(a)^{-\frac{1}{2}}R_nG_n(b)^{-\frac{1}{2}}\|_\mathrm{sp}=\spr(G_n(b)^{-\frac{1}{2}}G_n(a)^{-\frac{1}{2}}R_n)$, hence \eqref{eq:dct2} and Theorem 5.6.9 of \cite{HJ1985} yield
\begin{align*}
\|G_n(a)^{-\frac{1}{2}}R_nG_n(b)^{-\frac{1}{2}}\|_\mathrm{sp}&\leq\max_{1\leq i\leq n}\left|\sum_{k=1}^n\frac{u_{ik}u_{k1}}{\sqrt{f_a(2\xi_k)f_b(2\xi_k)}}\right|
\leq\frac{4}{2n+1}\sum_{k=1}^n\left\{\frac{1}{f_a(2\xi_k)}+\frac{1}{f_b(2\xi_k)}\right\}.
%&\leq\frac{4}{2n+1}\frac{1}{\lambda_1(a)}+\frac{2}{\pi}\int_0^\pi\frac{1}{a\Delta_n+2v_n(1-\cos(x))}\mathrm{d}x
\end{align*}
Therefore, Lemma \ref{lambda.inv} implies that $\|G_n(a)^{-\frac{1}{2}}R_nG_n(b)^{-\frac{1}{2}}\|_\mathrm{sp}=O(N_n)$. Moreover, since it holds that
\begin{align*}
\|G_n(a)^{-\frac{1}{2}}R_nG_n(b)^{-\frac{1}{2}}\|_F^2&=
\trace(G_n(a)^{-1}R_nG_n(b)^{-1}R_n)\\
&=\sum_{k,l=1}^n\frac{u_{1k}u_{k1}u_{1l}u_{l1}}{f_a(2\xi_k)f_b(2\xi_l)}
\leq\left(\frac{4}{2n+1}\sum_{k=1}^n\frac{1}{f_a(2\xi_k)}\right)\left(\frac{4}{2n+1}\sum_{l=1}^n\frac{1}{f_b(2\xi_l)}\right),
\end{align*}
Lemma \ref{lambda.inv} again yields the desired result.
\end{proof}

\begin{lem}\label{lemma:lemmaS}
For any $a>0$, we have
\if0
\begin{align*}
%\lim_{n\to\infty}r_n^3\sum_{i=1}^n\frac{\lambda_i^2}{(a\Delta_n+2v_n\lambda_i)^2}
\lim_{n\to\infty}r_n^2\|G_n(a)^{-\frac{1}{2}}S_nG_n(a)^{-\frac{1}{2}}\|^2_F
=\left\{\begin{array}{ll}
\frac{3}{2a^2}&\mathrm{if}~\gamma=0,\\
%\frac{1}{4\gamma^2}-\frac{a}{\gamma^2}\frac{1}{2\sqrt{a^2+4a\gamma}}+\frac{a^2}{4\gamma^2}\frac{1}{2a\sqrt{a^2+4a\gamma}}\left(1+\frac{a}{a+4\gamma}\right)&\mathrm{if}~0<\gamma<\infty,\\
\frac{1}{8\gamma^2}\left(2-3\frac{1}{\sqrt{1+4\gamma/a}}+\frac{1}{(1+4\gamma/a)^{3/2}}\right)&\mathrm{if}~0<\gamma<\infty,\\
0 &\mathrm{if}~\gamma=\infty.
\end{array}\right.
\end{align*}
\fi
\begin{equation}\label{eq:lemmaS}
\|G_n(a)^{-\frac{1}{2}}S_nG_n(a)^{-\frac{1}{2}}\|^2_\mathrm{sp}=O(N_n),\qquad
r_n^2\|G_n(a)^{-\frac{1}{2}}S_nG_n(a)^{-\frac{1}{2}}\|^2_F\to J^0_\gamma(a)
\end{equation}
as $n\to\infty$, where for any $a>0$ we set
\begin{align*}
J^0_\gamma(a)
=\left\{\begin{array}{ll}
\frac{6}{a^2}&\mathrm{if}~\gamma=0,\\
\frac{1}{2\gamma^2}\left(2-3\left(\frac{a}{a+4\gamma}\right)^{1/2}+\left(\frac{a}{a+4\gamma}\right)^{3/2}\right)&\mathrm{if}~0<\gamma<\infty,\\
0 &\mathrm{if}~\gamma=\infty.
\end{array}\right.
\end{align*}
\end{lem}

\begin{proof}
Since $S_n-R_n=F_n$, by Lemma \ref{lemma:normR} it suffices to prove \eqref{eq:lemmaS} in the case where $S_n$ is replaced by $F_n$. \eqref{eq:dct}--\eqref{eq:dct2} imply that 
\begin{equation}\label{eq:lemmaS1}
\|G_n(a)^{-\frac{1}{2}}F_nG_n(a)^{-\frac{1}{2}}\|_\mathrm{sp}=\max_{1\leq i\leq n}\frac{\lambda_i}{f_a(2\xi_i)}\leq\min\{v_n^{-1},4n/a\}
\end{equation}
and
\begin{equation}\label{eq:lemmaS2}
r_n^2\|G_n(a)^{-\frac{1}{2}}F_nG_n(a)^{-\frac{1}{2}}\|^2_F
=r_n^2\sum_{i=1}^n\frac{\lambda_i^2}{f_a(2\xi_i)^2}.
\end{equation}
The first equation in \eqref{eq:lemmaS} immediately follows from \eqref{eq:lemmaS1}. In order to prove the second equation in \eqref{eq:lemmaS}, we will prove the right side of \eqref{eq:lemmaS2} converges to $J^0_\gamma(a)$ as $n\to\infty$.

First, if $\gamma=0$, the desired result follows from Lemma \ref{smallnoise}.

Next, if $0<\gamma<\infty$, noting that $f_a(2\xi_i)=a/n+v_n\lambda_i$, we have
\begin{align*}
\frac{\lambda_i^2}{f_a(2\xi_i)^2}=\frac{1}{v_n^2}\left\{1-2\frac{a}{n}\frac{1}{f_a(2\xi_i)}+\left(\frac{a}{n}\right)^2\frac{1}{f_a(2\xi_i)^2}\right\},
\end{align*}
hence by Lemma \ref{lambda.inv} we obtain the desired equation once we prove
\begin{align*}
\lim_{n\to\infty}\frac{1}{n^3}\sum_{i=1}^{n}\frac{1}{f_a(2\xi_i)^2}
=\frac{1}{2a\sqrt{a^2+4a\gamma}}\left(1+\frac{a}{a+4\gamma}\right).
\end{align*}
The monotonicity of the cosine function yields
\begin{align*}
\frac{2n+1}{2\pi}\int_{0}^\pi\frac{1}{f_a(x)^2}\mathrm{d}x
\leq\sum_{i=1}^{n}\frac{1}{f_a(2\xi_i)^2}
\leq\frac{1}{f_a(2\xi_1)^2}+\frac{2n+1}{2\pi}\int_{0}^{\pi}\frac{1}{f_a(x)^2}\mathrm{d}x.
\end{align*}
Since formula (3.661.4) in \cite{GR2007} implies that 
\[
\int_0^\pi\frac{1}{f_a(x)^2}\mathrm{d}x
=\frac{\pi}{2(a/n)\sqrt{(a/n)^2+4av_n/n}}\left(1+\frac{a/n}{a/n+4v_n}\right),
\]
we obtain the desired result.

Finally, if $\gamma=\infty$, using the inequality $f_a(2\xi_i)\geq v_n\lambda_i$ we obtain
\begin{align*}
r_n^2\sum_{i=1}^{n}\frac{\lambda_i^2}{f_a(2\xi_i)^2}
\leq\frac{n}{N_n^3v_n^2}=\frac{1}{\sqrt{nv_n}},
\end{align*}
hence we deduce the desired result.
\end{proof}

\if0
\begin{lem}
$\|G_n(a)^{-\frac{1}{2}}T_nG_n(b)^{-\frac{1}{2}}\|_\mathrm{sp}=O(N_n\log n)$ as $n\to\infty$ for any $a,b>0$.
\end{lem}

\begin{proof}
It suffices to prove $\|G_n(a)^{-\frac{1}{2}}(T_n+R_n)G_n(b)^{-\frac{1}{2}}\|_\mathrm{sp}=O(N_n\log n)$ because of Lemma \ref{lemma:normR}. 
Since $u_{0i}=u_{1i}$, we have
\[
(U_n(T_n+R_n)U_n)^{ij}=\sum_{k=1}^n(u_{i,k-1}-u_{i,k+1})u_{kj}.
\]
Using the trigonometric identities $\cos(x)-\cos(y)=-2\sin(\frac{x+y}{2})\sin(\frac{x-y}{2})$ and $2\sin(x)\cos(y)=\sin(x+y)-\sin(x-y)$, we obtain
\if0
\[
u_{i,k-1}-u_{i,k+1}=\frac{4}{\sqrt{2n+1}}\sin(2\xi_i)\sin\left(\left(2k-1\right)\xi_i\right)
\]
\fi
\[
(U_n(T_n+R_n)U_n)^{ij}=\frac{4}{2n+1}\sin(2\xi_i)\sum_{k=1}^n\left\{\sin\left(\left(2k-1\right)(\xi_i+\xi_j)\right)+\sin\left(\left(2k-1\right)(\xi_i-\xi_j)\right)\right\}.
\]
Then, using summation formula (1.342.3) of \cite{GR2007}, we have
\if0
\[
\sum_{k=1}^n\sin\left\{\left(2k-1\right)x\right\}
=\frac{\sin^2(nx)}{\sin(x)}
\]
\fi
\begin{equation}\label{eq:summation}
(U_n(T_n+R_n)U_n)^{ij}=\frac{4}{2n+1}\sin(2\xi_i)\left\{\frac{\sin^2(n(\xi_i+\xi_j))}{\sin(\xi_i+\xi_j)}+\frac{\sin^2(n(\xi_i-\xi_j))}{\sin(\xi_i-\xi_j)}1_{\{i\neq j\}}\right\}.
\end{equation}
\if0
\[
(\Lambda_n(a)^{-1}U_nH_nU_n)^{ij}=\frac{4}{2n+1}\frac{\sin(2\xi_i)}{\lambda_i(a)}\left\{\frac{\sin^2(n(\xi_i+\xi_j))}{\sin(\xi_i+\xi_j)}+\frac{\sin^2(\xi_i-\xi_j)}{\sin(\xi_i-\xi_j)}1_{\{i\neq j\}}\right\}
\]
\fi
Now, using \eqref{eq:dct2} and inequalities \eqref{jordan}--\eqref{colnorm}, we obtain
\begin{align*}
&\|G_n(a)^{-\frac{1}{2}}(T_n+R_n)G_n(b)^{-\frac{1}{2}}\|_\mathrm{sp}
=\spr(\Lambda_n(a)^{-\frac{1}{2}}\Lambda_n(a)^{-\frac{1}{2}}U_n(T_n+R_n)U_n)\\
&\leq\frac{2\pi}{2n+1}\max_{1\leq i\leq n}\sqrt{g_{a}(2\xi_i)g_{b}(2\xi_i)}\sum_{j=1}^n\left\{\frac{1}{\xi_i+\xi_j}+\frac{1}{\pi-(\xi_i+\xi_j)}+\frac{1}{|\xi_i-\xi_j|}1_{\{i\neq j\}}\right\}\\
&\leq \max_{1\leq i\leq n}\sqrt{g_{a}(2\xi_i)g_{b}(2\xi_i)}\cdot6\sum_{j=1}^n\frac{1}{j},
\end{align*}
hence Lemma \ref{lemma:calculus} yields the desired result.
\end{proof}
\fi

% \section{Lemma: LemmaT}

\begin{lem}\label{lemma:lemmaT}
If $a$ and $b$ are positive numbers such that $a\neq b$, we have
\[
\lim_{n\to\infty}r_n^2\|G_n(a)^{-\frac{1}{2}}T_nG_n(b)^{-\frac{1}{2}}\|^2_F
=\left\{
\begin{array}{ll}
\frac{2}{ab}  & \mathrm{if}~\gamma=0,  \\
\frac{\sqrt{b(b+4\gamma)}-\sqrt{a(a+4\gamma)}}{\gamma^2(b-a)}-\frac{1}{\gamma^2}&\mathrm{if}~0<\gamma<\infty,\\
\frac{2}{\sqrt{a}+\sqrt{b}}  & \mathrm{if}~\gamma=\infty.   
\end{array}
\right.
\]
\end{lem}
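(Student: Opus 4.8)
The plan is to mirror the route used for Lemmas \ref{lemma:normR} and \ref{lemma:lemmaS}: reduce to an object that $U_n$ brings to a manageable form, turn the Frobenius norm into an explicit trigonometric double sum, and then isolate the single part of that sum that survives multiplication by $r_n^2$. The first step is to replace $T_n$ by $T_n+R_n$. Writing $A=G_n(a)^{-\frac12}(T_n+R_n)G_n(b)^{-\frac12}$ and $B=G_n(a)^{-\frac12}R_nG_n(b)^{-\frac12}$, the object of interest is $\|A-B\|_F^2$, and
\[
\left|\|A-B\|_F^2-\|A\|_F^2\right|\leq\|B\|_F\left(\|A\|_F+\|A-B\|_F\right).
\]
Lemma \ref{lemma:normR} gives $\|B\|_F=O(N_n)$, while the computation below will show $\|A\|_F=O(N_n^{3/2})$; hence the discrepancy is $O(N_n^{5/2})$ and $r_n^2=N_n^{-3}$ times it tends to $0$. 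So it suffices to evaluate $\lim_n r_n^2\|A\|_F^2$.

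Next, using $G_n(a)^{-\frac12}=U_n\Lambda_n(a)^{-\frac12}U_n$ from \eqref{eq:dct2} and the unitary invariance of $\|\cdot\|_F$, I would write, with $W:=U_n(T_n+R_n)U_n$,
\[
\|A\|_F^2=\sum_{i,j=1}^n\frac{(W^{ij})^2}{f_a(2\xi_i)f_b(2\xi_j)}.
\]
By the same trigonometric summation identity (1.342.3) of \cite{GR2007} used in the suppressed computation of $U_n(T_n+R_n)U_n$, the entries are
\[
W^{ij}=\frac{4}{2n+1}\sin(2\xi_i)\left\{\frac{\sin^2(n(\xi_i+\xi_j))}{\sin(\xi_i+\xi_j)}+\frac{\sin^2(n(\xi_i-\xi_j))}{\sin(\xi_i-\xi_j)}1_{\{i\neq j\}}\right\}=:\frac{4}{2n+1}\sin(2\xi_i)(P_{ij}+Q_{ij}).
\]
Expanding $(P_{ij}+Q_{ij})^2=P_{ij}^2+2P_{ij}Q_{ij}+Q_{ij}^2$, the heart of the argument is the claim that only the near-diagonal part of the $Q_{ij}^2$-term contributes.

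For that term, set $k=i-j$ and use $2\xi_1=\pi/(2n+1)$ to recognize $Q_{ij}^2=\sin^4(n\cdot 2\xi_1 k)/\sin^2(2\xi_1 k)$, so that the $Q^2$-contribution equals
\[
r_n^2\sum_i\frac{\sin^2(2\xi_i)}{f_a(2\xi_i)}\sum_{k\neq0}\frac{1}{f_b(2\xi_{i-k})}\left(\frac{4}{2n+1}\right)^2\frac{\sin^4(n\cdot 2\xi_1 k)}{\sin^2(2\xi_1 k)}.
\]
Because the inner lag sum has an $O(1/k^2)$ tail and thus concentrates on small $|k|$, I would replace $f_b(2\xi_{i-k})$ by $f_b(2\xi_i)$, factorize, and apply Lemma \ref{lemma:BS} (summed over both signs of $k$) to evaluate the bracketed lag factor as $4+o(1)$. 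What remains is the position sum $r_n^2\sum_i\sin^2(2\xi_i)/(f_a(2\xi_i)f_b(2\xi_i))=r_n^2\sum_i g_a(2\xi_i)g_b(2\xi_i)$, whose limit is given by Lemma \ref{lambda.inv}, \eqref{eq:lambda.inv2}, with $c=1$ (so $\tan(\pi c/2)=\infty$ and $\arctan(\infty)=\pi/2$). Multiplying that limit by $4$ reproduces $\tfrac{2}{ab}$, $\tfrac{\sqrt{b(b+4\gamma)}-\sqrt{a(a+4\gamma)}}{\gamma^2(b-a)}-\tfrac1{\gamma^2}$, and $\tfrac{2}{\sqrt a+\sqrt b}$ in the three regimes, as claimed.

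The main obstacle is the rigorous disposal of the discarded pieces. The terms $P_{ij}^2$ and $2P_{ij}Q_{ij}$ concentrate only at the two index corners $(1,1)$ and $(n,n)$, where $\sin(\xi_i\pm\xi_j)$ vanishes; there a naive bound produces a factor behaving like $1/\sin^2$, which when summed would diverge logarithmically. The saving feature is the weight $\sin^2(2\xi_i)$ in $W^{ij}$, which vanishes quadratically as $\xi_i\to0$ and as $\xi_i\to\pi$, so that, combined with the $r_n^2$ scaling and the fact that the corners occupy a vanishing fraction of the index square, these terms tend to $0$. Establishing this uniformly across the three ranges of $\gamma$ — in particular the case $\gamma=\infty$, where $f_b$ varies appreciably over a shift of $k$ indices and one must justify replacing $f_b(2\xi_{i-k})$ by $f_b(2\xi_i)$ inside the lag sum by controlling $\cot\xi_i$ near the small-index boundary — is the delicate part of the proof.
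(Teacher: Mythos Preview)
Your outline is correct and the evaluation of the surviving $Q^2$-contribution via Lemmas \ref{lemma:BS} and \ref{lambda.inv} (with $c\uparrow1$) is exactly how the paper finishes. The difference is in how the nuisance terms are organised. You expand $\|G_n(a)^{-1/2}(T_n+R_n)G_n(b)^{-1/2}\|_F^2$ directly, which forces you to control the cross term $2P_{ij}Q_{ij}$ and to work with the asymmetric weight $\sin^2(2\xi_i)/\bigl(f_a(2\xi_i)f_b(2\xi_j)\bigr)$; the latter is what makes your $\gamma=\infty$ bound awkward, since $1/f_b(2\xi_j)$, unlike $g_b(2\xi_j)$, is not $O(N_n)$.

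The paper avoids both issues by exploiting that $\tilde T:=U_nT_nU_n$ is antisymmetric and $\tilde R:=U_nR_nU_n$ is symmetric. Writing $M=\tilde T+\tilde R$ one has $\tilde T_{ij}^2-\tilde R_{ij}^2=-M_{ij}M_{ji}$, and since $M_{ij}=c\sin(2\xi_i)(P_{ij}+Q_{ij})$ while $M_{ji}=c\sin(2\xi_j)(P_{ij}-Q_{ij})$, the product $M_{ij}M_{ji}$ kills the $PQ$ cross term algebraically and delivers the symmetric weight $g_a(2\xi_i)g_b(2\xi_j)$. Thus the paper computes $\|\cdot T_n\cdot\|_F^2-\|\cdot R_n\cdot\|_F^2$ rather than $\|\cdot(T_n+R_n)\cdot\|_F^2$. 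With this weight the $P^2$-piece is dispatched in one line by $\max_i g_a(2\xi_i)\cdot\max_j g_b(2\xi_j)=O(N_n^2)$ from Lemma \ref{lemma:calculus}, uniformly in $\gamma$; and the replacement of the $j$-argument by the $i$-argument in the $Q^2$-piece uses the derivative bound $|g_b'|\le 3n/b$ from the same lemma, rather than a bound on $1/f_b$. Your route can be pushed through (Cauchy--Schwarz on the mixed term, and your $\sin^2(2\xi_i)$ cancellation for $P^2$), but the antisymmetry trick is the clean way to remove precisely the parts you flag as delicate.
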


\begin{proof}
%Since we have $\lim_{n\to\infty}r_n^2\|G_n(a)^{-\frac{1}{2}}T_nG_n(b)^{-\frac{1}{2}}\|^2_F=\lim_{n\to\infty}r_n^2\|G_n(a)^{-\frac{1}{2}}(T_n+R_n)G_n(b)^{-\frac{1}{2}}\|^2_F$ by Lemma \ref{lemma:normR}, it suffices to compute the quantity in the right side of this equality. 
Since $u_{0i}=u_{1i}$, we have
\[
(U_nT_nU_n)^{ij}+(U_nR_nU_n)^{ij}=(U_n(T_n+R_n)U_n)^{ij}=\sum_{k=1}^n(u_{i,k-1}-u_{i,k+1})u_{kj}.
\]
Using the trigonometric identities $\cos(x)-\cos(y)=-2\sin(\frac{x+y}{2})\sin(\frac{x-y}{2})$ and $2\sin(x)\cos(y)=\sin(x+y)-\sin(x-y)$, we obtain
\[
(U_nT_nU_n)^{ij}+(U_nR_nU_n)^{ij}=\frac{4}{2n+1}\sin(2\xi_i)\sum_{k=1}^n\left\{\sin\left(\left(2k-1\right)(\xi_i+\xi_j)\right)+\sin\left(\left(2k-1\right)(\xi_i-\xi_j)\right)\right\}.
\]
Then, using summation formula (1.342.3) of \cite{GR2007}, we have
\begin{equation}\label{eq:summation}
(U_nT_nU_n)^{ij}+(U_nR_nU_n)^{ij}=\frac{4}{2n+1}\sin(2\xi_i)\left\{\frac{\sin^2(n(\xi_i+\xi_j))}{\sin(\xi_i+\xi_j)}+\frac{\sin^2(n(\xi_i-\xi_j))}{\sin(\xi_i-\xi_j)}1_{\{i\neq j\}}\right\}.
\end{equation}
Now, since $(U_nT_nU_n)^\top=-U_nT_nU_n$ and $(U_nR_nU_n)^\top=U_nR_nU_n$, by \eqref{eq:dct2}, \eqref{eq:summation} and the unitary invariance of the Frobenius norm, we obtain
%Using \eqref{eq:summation}, we decompose $\|G_n(a)^{-\frac{1}{2}}(T_n+R_n)G_n(b)^{-\frac{1}{2}}\|_F^2$ as
\begin{align*}
&\|G_n(a)^{-\frac{1}{2}}T_nG_n(b)^{-\frac{1}{2}}\|_F^2-\|G_n(a)^{-\frac{1}{2}}R_nG_n(b)^{-\frac{1}{2}}\|_F^2\\
&=-\left(\frac{4}{2n+1}\right)^2\sum_{i,j=1}^ng_a(2\xi_i)g_b(2\xi_j)\left\{\frac{\sin^4(n(\xi_i+\xi_j))}{\sin^2(\xi_i+\xi_j)}-\frac{\sin^4(n(\xi_i-\xi_j))}{\sin^2(\xi_i-\xi_j)}1_{\{i<j\}}-\frac{\sin^4(n(\xi_i-\xi_j))}{\sin^2(\xi_i-\xi_j)}1_{\{i>j\}}\right\}\\
&=: \mathbf{B}_{1,n}+ \mathbf{B}_{2,n}+ \mathbf{B}_{3,n}.
\end{align*}
First we consider $ \mathbf{B}_{1,n}$. Using inequalities \eqref{jordan} and $(x+y)^2\geq4xy$ ($x,y\in\mathbb{R}$), we have
\begin{align*}
| \mathbf{B}_{1,n}|
&\leq4\left(\max_{1\leq i\leq n}g_a(2\xi_i)\right)\left(\max_{1\leq i\leq n}g_b(2\xi_i)\right)\sum_{i,j=1}^n\frac{1}{(i-\frac{1}{2}+j-\frac{1}{2})^2}\\
&\leq4\left(\max_{1\leq i\leq n}g_a(2\xi_i)\right)\left(\max_{1\leq i\leq n}g_b(2\xi_i)\right)\left(\sum_{i=1}^n\frac{1}{2i-1}\right)^2,
\end{align*}
and thus Lemma \ref{lemma:calculus} yields $ \mathbf{B}_{1,n}=O((N_n\log n)^2)=o(r_n^{-2})$.

Next we consider $ \mathbf{B}_{2,n}$. First we prove $ \mathbf{B}_{2,n}= \mathbf{B}_{2,n}'+o(r_n^{-2})$, where
\[
 \mathbf{B}_{2,n}'=\left(\frac{4}{2n+1}\right)^2\sum_{i,j=1}^ng_{a}(2\xi_i)g_{b}(2\xi_i)\frac{\sin^4(n(\xi_i-\xi_j))}{\sin^2(\xi_i-\xi_j)}1_{\{i<j\}}.
\]
Lemma \ref{lemma:calculus} and \eqref{jordan} yield
\begin{align*}
\left| \mathbf{B}_{2,n}- \mathbf{B}_{2,n}'\right|
&\leq\left(\frac{4}{2n+1}\right)^2\frac{6n}{b}\sum_{i,j=1}^ng_{a}(2\xi_i)|\xi_i-\xi_j|\frac{\sin^4(n(\xi_i-\xi_j))}{\sin^2(\xi_i-\xi_j)}1_{\{i<j\}}\\
&\leq\frac{4}{(2n+1)^2}\frac{6\pi^2n}{b}\sum_{i,j=1}^ng_{a}(2\xi_i)\frac{1}{|\xi_i-\xi_j|}1_{\{i<j\}}\\
&\leq\frac{4}{2n+1}\frac{6\pi n}{b}\sum_{i=1}^ng_{a}(2\xi_i)\sum_{j=1}^n\frac{1}{j}.
\end{align*}
If $\gamma=\infty$, by the property of $g_a$ we have
\begin{align*}
\frac{2\pi}{2n+1}\sum_{i=1}^ng_{a}(2\xi_i)
&\leq\frac{2\pi}{2n+1}2\max_{1\leq i\leq n}g_a(2\xi_i)+\int_{2\xi_1}^{2\xi_n} g_{a}(x)\mathrm{d}x\\
&\leq\frac{2\pi}{2n+1}2\max_{1\leq i\leq n}g_a(2\xi_i)+\frac{1}{2v_n}\log\frac{f_a(2\xi_n)}{f_a(2\xi_1)},
%&=\frac{2\pi}{2n+1}\frac{1}{\sqrt{(a\Delta_n)^2+4a\Delta_nv_n}}+\frac{1}{2v_n}\log\frac{a\Delta_n+4v_n}{a\Delta_n}\\
%&=O(v_n^{-1}\log n)
\end{align*}
hence Lemma \ref{lemma:calculus} yields $\sum_{i=1}^ng_{a}(2\xi_i)=O(N_n^2\log n)$. This also holds true in the case that $\gamma<\infty$ due to Lemma \ref{lambda.inv} because $g_a(x)\leq f_a(x)^{-1}$. Consequently, $\left| \mathbf{B}_{2,n}- \mathbf{B}_{2,n}'\right|=O((N_n\log n)^2)=o(r_n^{-2})$. Now, since $\xi_i-\xi_j=2\xi_1(i-j)$, for any $c\in(0,1)$ we have
\begin{align*}
\left(\frac{4}{2n+1}\right)^2\sum_{i=1}^{\lfloor nc\rfloor} g_{a}(2\xi_i)g_{b}(2\xi_i)\sum_{j=1}^{n-\lfloor nc\rfloor}\frac{\sin^4(n\cdot 2j\xi_1)}{\sin^2(2j\xi_1)}
\leq
 \mathbf{B}_{2,n}'
\leq\left(\frac{4}{2n+1}\right)^2\sum_{i=1}^n g_{a}(2\xi_i)g_{b}(2\xi_i)\sum_{j=1}^n\frac{\sin^4(n\cdot 2j\xi_1)}{\sin^2(2j\xi_1)}.
\end{align*}
Therefore, Lemma \ref{lemma:BS} implies that
\begin{align*}
2\liminf_{n\to\infty}r_n^2\sum_{i=1}^{\lfloor nc\rfloor} g_{a}(2\xi_i)g_{b}(2\xi_i)
\leq
\liminf_{n\to\infty}r_n^2 \mathbf{B}_{2,n}'
\leq
\limsup_{n\to\infty}r_n^2 \mathbf{B}_{2,n}'
\leq2\limsup_{n\to\infty}r_n^2\sum_{i=1}^n g_{a}(2\xi_i)g_{b}(2\xi_i).
\end{align*}
Then, letting $c\uparrow1$, by Lemma \ref{lambda.inv} we obtain $\lim_{n\to\infty}r_n^2 \mathbf{B}_{2,n}=\lim_{n\to\infty}r_n^2 \mathbf{B}_{2,n}'=2\lim_{n\to\infty}r_n^2\sum_{i=1}^n g_{a}(2\xi_i)g_{b}(2\xi_i)$. 

By symmetry we have $\lim_{n\to\infty}r_n^2 \mathbf{B}_{3,n}=\lim_{n\to\infty}r_n^2 \mathbf{B}_{2,n}$, hence we complete the proof due to \eqref{eq:lambda.inv2} and Lemma \ref{lemma:normR}.
\end{proof}

\begin{proof}[\textbf{\upshape{Proof of Proposition \ref{prop:main}}}]
Set
\begin{align*}
\bar{U}_n
=\frac{1}{\sqrt{2}}\left[
\begin{array}{cc}
U_n & -U_n \\
U_n & U_n
\end{array}
\right].
\end{align*}
Then we have
\if0
\begin{align*}
\bar{U}_n^\top\left[
\begin{array}{cc}
A & B \\
C & D
\end{array}
\right]\bar{U}_n
=\frac{1}{2}\left[
\begin{array}{cc}
U(A+B+C+D)U & -U(A-B+C-D)U \\
-U(A+B-C-D)U & U(A-B-C+D)U
\end{array}
\right],
\end{align*}
hence we obtain
\begin{align*}
\bar{U}_n^\top\bar{G}_n\bar{U}_n
=\left[
\begin{array}{cc}
\Lambda_n(1+\rho) & 0 \\
0 & \Lambda_n(1-\rho)
\end{array}
\right]
\end{align*}
and
\begin{align*}
\bar{U}_n^\top\bar{S}_n\bar{U}_n
&=\frac{1}{2}\left[
\begin{array}{cc}
U_n(S_n+R_n)U_n & 0 \\
0 & -U_n(S_n-R_n)U_n
\end{array}
\right]
\end{align*}
and
\begin{align*}
\bar{U}_n^\top\bar{T}_n\bar{U}_n
&=\frac{1}{2}\left[
\begin{array}{cc}
0 & U_n(T_n+R_n)U_n \\
-U_n(T_n-R_n)U_n & 0
\end{array}
\right].
\end{align*}
Therefore, it holds that
\fi
\begin{align*}
\bar{G}_n^{-\frac{1}{2}}\bar{S}_n\bar{G}_n^{-\frac{1}{2}}
&=\frac{1}{2}\bar{U}_n\left[
\begin{array}{cc}
\overline{L}_nU_n(S_n+\rho^{-1}R_n)U_n\overline{L}_n & 0 \\
0 & -\underline{L}_nU_n(S_n-\rho^{-1}R_n)U_n\underline{L}_n
\end{array}
\right]\bar{U}_n^\top
\end{align*}
and
\begin{align*}
\bar{G}_n^{-\frac{1}{2}}\bar{T}_n\bar{G}_n^{-\frac{1}{2}}
&=\frac{1}{2}\bar{U}_n\left[
\begin{array}{cc}
0 & \overline{L}_nU_n(T_n+\rho^{-1}R_n)U_n\underline{L}_n \\
-\underline{L}_nU_n(T_n-\rho^{-1}R_n)U_n\overline{L}_n & 0
\end{array}
\right]\bar{U}_n^\top,
\end{align*}
where $\overline{L}_n=\Lambda_n(1+\rho)^{-\frac{1}{2}}$ and $\underline{L}_n=\Lambda_n(1-\rho)^{-\frac{1}{2}}$. 
Hence we obtain
\begin{align*}
&4\|\bar{G}_n^{-\frac{1}{2}}(\alpha\bar{T}_n+\beta\bar{S}_n)\bar{G}_n^{-\frac{1}{2}}\|_F^2\\
&=2\alpha^2\|G_n(1+\rho)^{-\frac{1}{2}}(T_n+\rho^{-1}R_n)G_n(1-\rho)^{-\frac{1}{2}}\|_F^2\\
&\quad+\beta^2(\|G_n(1+\rho)^{-\frac{1}{2}}(S_n+\rho^{-1}R_n)G_n(1+\rho)^{-\frac{1}{2}}\|_F^2
+\|G_n(1-\rho)^{-\frac{1}{2}}(S_n-\rho^{-1}R_n)G_n(1-\rho)^{-\frac{1}{2}}\|_F^2).
\end{align*}
Therefore, Lemmas \ref{lemma:normR}--\ref{lemma:lemmaT} yield \eqref{information}. On the other hand, since $2\|\bar{G}_n^{-\frac{1}{2}}\bar{S}_n\bar{G}_n^{-\frac{1}{2}}\|_\mathrm{sp}\leq\|G_n(1+\rho)^{-\frac{1}{2}}(S_n+\rho^{-1}R_n)G_n(1+\rho)^{-\frac{1}{2}}\|_\mathrm{sp}+\|G_n(1-\rho)^{-\frac{1}{2}}(S_n-\rho^{-1}R_n)G_n(1-\rho)^{-\frac{1}{2}}\|_\mathrm{sp}$, Lemmas \ref{lemma:normR}--\ref{lemma:lemmaS} also yield $\|\bar{G}_n^{-\frac{1}{2}}\bar{S}_n\bar{G}_n^{-\frac{1}{2}}\|_\mathrm{sp}=O(N_n)$. 

Hence the proof is completed once we prove $\|\bar{G}_n^{-\frac{1}{2}}\bar{T}_n\bar{G}_n^{-\frac{1}{2}}\|_\mathrm{sp}=O(N_n)$. Note that $\bar{G}_n^{-\frac{1}{2}}V_n(\vartheta)\bar{G}_n^{-\frac{1}{2}}$ is positive semidefinite and $\rho\vartheta\bar{G}_n^{-\frac{1}{2}}\bar{T}_n\bar{G}_n^{-\frac{1}{2}}+\bar{G}_n^{-\frac{1}{2}}V_n(\vartheta)\bar{G}_n^{-\frac{1}{2}}=E_{2n}-\rho|\vartheta|\bar{G}_n^{-\frac{1}{2}}\bar{S}_n\bar{G}_n^{-\frac{1}{2}}$ for any $\vartheta\in\Theta_n$. Note also that both $\bar{T}_n$ and $\bar{S}_n$ are symmetric. Therefore, if $\lambda$ is an eigenvalue of $\bar{G}_n^{-\frac{1}{2}}\bar{T}_n\bar{G}_n^{-\frac{1}{2}}$, by the monotonicity theorem for eigenvalues (Corollary 4.3.3 of \cite{HJ1985}) we have $\rho\vartheta\lambda\leq\|E_{2n}-\rho|\vartheta|\bar{G}_n^{-\frac{1}{2}}\bar{S}_n\bar{G}_n^{-\frac{1}{2}}\|_\mathrm{sp}$ for any $\vartheta\in\Theta_n$. Since we can take $\vartheta=\pm N_n^{-1}$, this inequality implies that $|\rho|N_n^{-1}\|\bar{G}_n^{-\frac{1}{2}}\bar{T}_n\bar{G}_n^{-\frac{1}{2}}\|_\mathrm{sp}\leq\|E_{2n}-\rho N_n^{-1}\bar{G}_n^{-\frac{1}{2}}\bar{S}_n\bar{G}_n^{-\frac{1}{2}}\|_\mathrm{sp}\leq1+|\rho|N_n^{-1}\|\bar{G}_n^{-\frac{1}{2}}\bar{S}_n\bar{G}_n^{-\frac{1}{2}}\|_\mathrm{sp}$, which yields the desired result.
\end{proof}

\section*{Acknowledgements}

%\addcontentsline{toc}{section}{Acknowledgements}
The author is grateful to two anonymous referees for their careful reading and insightful comments which have significantly improved a former version of this paper.  
The author also thanks the participants at ASC2013 Asymptotic Statistics and Computations, Statistics for Stochastic Processes and Analysis of High Frequency Data, Statistique Asymptotique des Processus Stochastiques X, and Statistics for Stochastic Processes and Analysis of High Frequency Data IV for valuable comments.  
This work was supported by CREST, JST.

{\small
\renewcommand*{\baselinestretch}{1}\selectfont
\addcontentsline{toc}{section}{References}
%\bibliography{base}

}

\end{document}